\documentclass[11pt]{amsart}

\usepackage{enumerate}
\usepackage{amsmath,amsthm,verbatim,amssymb,amsfonts,amscd,amsopn,amsxtra,graphicx,lmodern,enumitem}
\usepackage[hidelinks]{hyperref}
\usepackage{tikz-cd} 
\usepackage{graphics}
\graphicspath{ {./images/} }
\usepackage{mathrsfs}
\usepackage{relsize}
\usepackage{enumitem}
\usepackage[utf8]{inputenc}
\usepackage{csquotes}
\usepackage{amsthm}
\usepackage{thmtools}
\usepackage{mathtools}
\usepackage{microtype}
\usepackage{pdfrender,xcolor}

\tikzset{
	symbol/.style={
		draw=none,
		every to/.append style={
			edge node={node [sloped, allow upside down, auto=false]{$#1$}}}
	}
}

\newlist{condenum}{enumerate}{1} 
\setlist[condenum]{label=\bfseries C\arabic*., 
	ref=\arabic*, wide}

\begin{document}
	\pdfrender{StrokeColor=black,TextRenderingMode=2,LineWidth=0.2pt}	
	
	\title{Relative approximation degrees and the Henselian Rationality problem over perfect fields}
	
		\author{Arpan Dutta and Rumi Ghosh}
		\address{Department of Mathematics, School of Basic Sciences, IIT Bhubaneswar, Argul,
			Odisha, India, 752051.}
		\email{arpandutta@iitbbs.ac.in, s23ma09008@iitbbs.ac.in}


	\def\NZQ{\mathbb}               
	\def\NN{{\NZQ N}}
	\def\QQ{{\NZQ Q}}
	\def\ZZ{{\NZQ Z}}
	\def\RR{{\NZQ R}}
	\def\CC{{\NZQ C}}
	\def\AA{{\NZQ A}}
	\def\BB{{\NZQ B}}
	\def\PP{{\NZQ P}}
	\def\FF{{\NZQ F}}
	\def\GG{{\NZQ G}}
	\def\HH{{\NZQ H}}
	\def\UU{{\NZQ U}}
	\def\P{\mathcal P}
	\def\Z{\mathcal Z}
	\def\C{\mathcal C}

	%
	%
	\let\union=\cup
	\let\sect=\cap
	\let\dirsum=\oplus
	\let\tensor=\otimes
	\let\iso=\cong
	\let\Union=\bigcup
	\let\Sect=\bigcap
	\let\Dirsum=\bigoplus
	\let\Tensor=\bigotimes
	
	\theoremstyle{plain}
	\newtheorem{Theorem}{Theorem}[section]
	\newtheorem{Lemma}[Theorem]{Lemma}
	\newtheorem{Corollary}[Theorem]{Corollary}
	\newtheorem{Proposition}[Theorem]{Proposition}
	\newtheorem{Problem}[Theorem]{Problem}
	\newtheorem{Conjecture}[Theorem]{Conjecture}
	\newtheorem{Question}[Theorem]{Question}

	\theoremstyle{definition}
	\newtheorem{Example}[Theorem]{Example}
	\newtheorem{Examples}[Theorem]{Examples}
	\newtheorem{Definition}[Theorem]{Definition}
	
	\theoremstyle{remark}
	\newtheorem{Remark}[Theorem]{Remark}
	\newtheorem{Remarks}[Theorem]{Remarks}

	\newcommand{\n}{\par\noindent}
	\newcommand{\nn}{\par\vskip2pt\noindent}
	\newcommand{\sn}{\par\smallskip\noindent}
	\newcommand{\mn}{\par\medskip\noindent}
	\newcommand{\bn}{\par\bigskip\noindent}
	\newcommand{\pars}{\par\smallskip}
	\newcommand{\parm}{\par\medskip}
	\newcommand{\parb}{\par\bigskip}

	\let\epsilon=\varepsilon
	\let\phi=\varphi
	\let\kappa=\varkappa
	
	\newcommand{\trdeg}{\mbox{\rm trdeg}\,}
	\newcommand{\rr}{\mbox{\rm rat rk}\,}
	\newcommand{\sep}{\mathrm{sep}}
	\newcommand{\ac}{\mathrm{ac}}
	\newcommand{\ins}{\mathrm{ins}}
	\newcommand{\res}{\mathrm{res}}
	\newcommand{\Gal}{\mathrm{Gal}\,}
	\newcommand{\ch}{\operatorname{char}}
	\newcommand{\Aut}{\mathrm{Aut}\,}
	\newcommand{\kras}{\mathrm{kras}\,}
	\newcommand{\dist}{\mathrm{dist}\,}
	\newcommand{\ord}{\mathrm{ord}\,}
	\newcommand{\Div}{\mathrm{Div}\,}
	\newcommand{\Supp}{\mathrm{Supp}\,}
	\newcommand{\Spec}{\mathrm{Spec}\,}
	\newcommand{\height}{\mathrm{ht}\,}
	\newcommand{\rk}{\mathrm{rank}\,}
	\newcommand{\Diff}{\mathrm{Diff}\,}
	\newcommand{\Ram}{\mathrm{Ram}\,}
	\newcommand{\id}{\mathrm{id}\,}
	\newcommand{\lex}{\mathrm{lex}\,}
	\newcommand{\gr}{\mathrm{gr}\,}
	\newcommand{\init}{\mathrm{in}\,}
	\newcommand{\depth}{\mathrm{depth}\,}
	\newcommand{\Lim}{\mathrm{Lim}\,}

	\let\phi=\varphi
	\let\kappa=\varkappa
	
	\def \a {\alpha}
	\def \b {\beta}
	\def \s {\sigma}
	\def \d {\delta}
	\def \g {\gamma}
	\def \o {\omega}
	\def \l {\lambda}
	\def \th {\theta}
	\def \e {$\epsilon$}
	\def \D {\Delta}
	\def \G {\Gamma}
	\def \O {\Omega}
	\def \L {\Lambda}


	%
	%
	\textwidth=15cm \textheight=22cm \topmargin=0.5cm
	\oddsidemargin=0.5cm \evensidemargin=0.5cm \pagestyle{plain}


	
	\date{\today}
	
	\maketitle
	
	
	\begin{abstract}
		Let \((F|K,w)\) be an immediate extension of function fields of transcendence degree one. The extension is said to be henselian rational if there exists some element \(X\) in the henselization \(F^h\) of \((F,w)\) such that \(F^h = K(X)^h\). The problem of henselian rationality over tame fields was settled by Kuhlmann in 2019. In the present paper, we develop two ingredients towards extending the result to perfect fields. 
		
		We first complete the theory of relative approximation degrees in the hitherto unresolved algebraic-type setting needed for this purpose. Over henselian fields, we prove the existence of the relative approximation degree and relative approximation constant of every polynomial. We interpret these invariants through the \(j\)-invariants of suitable associated monomial valuations, thereby showing that they can directly be read off from the corresponding Taylor expansions themselves. As a consequence, we extend the henselian degree bound of Kuhlmann-Vlahu to this setting.
		
		We then study the Artin-Schreier reduction underlying Kuhlmann's proof. Over henselian perfect fields, we show that every polynomial is Artin-Schreier equivalent to one whose relative approximation degree belongs to \(\{1,p\}\). We construct an explicit rank-one example illustrating that this bound is sharp. Therefore, Kuhlmann's polynomial reduction method may not be carried over directly to the setup of perfect fields. Nevertheless, our example does not disprove henselian rationality. Indeed, the associated Artin-Schreier function field is shown to be henselian rational. 
		
		Finally, under the assumption that \((K,v)\) equals its absolute ramification field, we prove that \((F^h|L,w)\) is henselian rational, where \(L\) is the implicit constant field of the extension \((F|K,w)\). Moreover, henselian rationality descends from \(L\) to \(K\) whenever \(L|K\) is a finite extension. In particular, this finiteness condition holds whenever some separating transcendental element induces an extension of Type II, yielding henselian rationality in this case. 
	\end{abstract}


\section{Introduction} 
\subsection{Henselian rationality and relative approximation degrees} Let \((K,v)\) be a valued field with \(\ch K = p > 0\) an odd prime. A central problem in the structure theory of valued function fields related closely to the Local Uniformization problem is to determine when an immediate function field of transcendence degree one becomes rational after henselization. Let \( (F|K,w) \) be an \textit{immediate} extension of valued function fields of transcendence degree one, that is, the value group \(wF\) and residue field \(Fw\) coincide with \(vK\) and \(Kv\) respectively. Fix an extension of \(w\) to \( \overline{F} \), a fixed algebraic closure of \(F\). 
\begin{Definition}\label{Defn henselian rationality}
	The extension \( (F|K,w) \) is said to be \textbf{henselian rational} if there exists an element \(Y\) in the \textit{henselization} \(F^h\) of \(F\), transcendental over \(K\), such that \( F^h = K(Y)^h \).
\end{Definition} Over \textit{tame} fields (that is, valued fields whose algebraic closure coincides with their absolute ramification field), Kuhlmann proved henselian rationality in \cite{Kuhlmann2019EliminationII}. His approach can be summarised as follows:
\begin{enumerate}[label=(\Alph*)]
	\item Using ramification theory, reduce to the case when \( \rk(K,v) = 1 \)\label{Step A}.
	\item Prove henselian rationality over algebraically closed base fields of rank one\label{Step B}. 
	\item Prove henselian rationality over tame base fields by \enquote{pulling down} henselian rationality through tame extensions over algebraically maximal fields.\label{Step C} 
\end{enumerate}  
The problem remains open once we remove the tameness hypothesis. Over base fields of positive characteristic, tame fields are precisely those which are \textit{defectless} and perfect. An example has been constructed in \cite{Kuhlmann2001SymbLogic} exhibiting the failure of henselian rationality over a defectless field which is not perfect. Therefore, there is still hope of extending henselian rationality to the setup of perfect fields. A positive resolution of this problem would be a key to an alternate valuation theoretic proof of Temkin's \enquote{Inseparable Local Uniformization} \cite{TemkinInsepLU}.

\pars The primary approach for tackling this problem is the following: after reducing to the rank one setup, pick a separating transcendental element \(X \in F\setminus K\). Then \( (F|K(X),w) \) is a finite separable immediate extension. After lifting through suitable extensions, we can further assume that this extension can be decomposed as a tower of Artin-Schreier extensions, say 
\[  K(X)^h \longrightarrow K(X)^h (\th_1) \longrightarrow \dotsb \longrightarrow K(X)^h(\th_1, \dotsc , \th_n) = F^h,  \]
where each \( \th_i\) is an Artin-Schreier generator over the preceding field. We first focus on \( \th_1 \). Since \( (K(X),w) \) is a rank one valued field, the polynomial ring \(K[X]\) is dense in the henselization \(K(X)^h\), and hence we can assume 
\[ \th_1^p - \th_1 = f(X) \in K[X].  \]
Our goal is to find some polynomial \( F(X) \in K[X] \) such that \( F \equiv f \bmod \P(K[X]) \) where \( \P \) is the Artin-Schreier operator, such that
\[ K(X)^h = K(F(X))^h.  \]
The existence of such a polynomial would allow us to reduce the length of the above Artin-Schreier chain in the following way: let \( \a_1 \) be a root of the polynomial \( T^p - T - F(X) \) over \( K(X)^h \). Since \( f \equiv F \bmod \P(K[X]) \), \( \th_1 \) and \( \a_1 \) generate the same extension over \( K(X)^h \). Consequently, 
\[ K(X)^h(\th_1) = K(X)^h (\a_1) = K(F(X))^h(\a_1) = K(\a_1)^h.  \]
Thus it is important to be able to compute the degrees \( [K(X)^h:K(f(X))^h] \), and especially give conditions under which the degree becomes one. In general, this problem is quite difficult. The relative approximation degree and relative approximation constant were introduced by Kuhlmann and Vlahu in \cite{KuhlmannVlahu2014} to tackle this very issue. Their significance in this context is captured by Theorem 9.1 of \cite{KuhlmannVlahu2014}, which states that
\[ [K(X)^h : K(f(X))^h] \le h_K(X:f),  \]
where \(h_K(X:f)\) denotes the relative approximation degree of \(f\). Therefore, 
\[ K(X)^h = K(f(X))^h \text{ whenever } h_K(X:f) = 1.  \]
Thus the problem of henselian rationality reduces to the problem of finding an Artin-Schreier equivalent polynomial with relative approximation degree one. 


\subsection{Relative approximation degree in the algebraic type setting} 
Since $(K(X)|K,w)$ is immediate, there exists, by \cite[Theorem 1]{Kaplansky1942}, a pseudo-convergent sequence $\mathcal{C}:= \{z_\nu\}_{\nu<\l} \subset K$ without a limit in $K$ and with $X$ as a limit. Thus the elements \(z_\nu\) provide increasingly accurate approximations to \(X\). For a polynomial \(f\in K[X]\), the relative approximation degree measures the rate at which \(f(z_\nu)\) approaches \(f(X)\). 

\begin{Definition}\label{Defn rel appr deg}
	Let $f\in K[X]$. Assume that there exists an ordinal $\nu_0<\l$, a positive integer $h$ and $\b\in vK$ such that
	\[ w(f(X)-f(z_\nu)) = \b + hw(X-z_\nu)  \]
	for all $\nu_0 <\nu < \l$. Then $h$ is called the \textbf{relative approximation degree of $f$ in $X$} and is denoted by $h_K(X:f)$. Moreover, $\b$ is called the \textbf{relative approximation constant of $f$ in $X$} and denoted by $\b_K(X:f)$.
\end{Definition} 
The existence of these invariants depends on the behaviour of \(\C\). It follows from \cite[Lemmas 1 and 5]{Kaplansky1942} that for any polynomial $f\in K[X]$, the sequence $\{vf(z_\nu)\}_{\nu<\l}$ is either ultimately stable, or ultimately strictly increasing. If the former holds for every polynomial $f$ over $K$, we say that $\mathcal{C}$ is of \textbf{transcendental type}. Otherwise, we say that $\mathcal{C}$ is of \textbf{algebraic type}. If $\mathcal{C}$ is of algebraic type, then a monic polynomial $Q(X)$ of least degree whose value is not ultimately fixed by $\mathcal{C}$ is said to be an \textbf{associated minimal polynomial to $\mathcal{C}$}. Observe that $Q$ is necessarily irreducible over $K$. We set 
\[ \deg\mathcal{C}:= \begin{cases}
	\deg Q &\quad\text{if $\mathcal{C}$ is of algebraic type},\\
	\infty &\quad\text{if $\mathcal{C}$ is of transcendental type}.
\end{cases}  \]  
The following results are collected from \cite{KuhlmannVlahu2014}:
\begin{enumerate}[label=(\roman*)]
	\item The invariants $h_K(X:f)$ and $\b_K(X:f)$ exist whenever $\deg f \leq \deg \mathcal{C}$ \cite[Lemma 5.2]{KuhlmannVlahu2014}, and also whenever \( \{vf(z_\nu)\} \) is ultimately strictly increasing \cite[Lemma 5.4]{KuhlmannVlahu2014}. Consequently, they exist for every polynomial over $K$ if $\mathcal{C}$ is of transcendental type.
	\item The remaining case, that is when $\mathcal{C}$ is of algebraic type and \( \{vf(z_\nu)\} \) is ultimately stable, remained open.
\end{enumerate}
The difficulty in the algebraic type case is the following. Let 
\[  f(X) = \sum_{i=0}^{n} f_i(z_\nu) (X-z_\nu)^i  \]
be the Taylor expansion of \(f(X)\) at \(z_\nu\), where \(f_i(X)\) denotes the \(i\)-th Hasse-Schmidt derivative. In the transcendental type case, the values of the coefficients \(vf_i(z_\nu)\) are ultimately stable along \(\C\), and hence the dominant Taylor monomial can be identified directly. This need not hold when \(\C\) is of algebraic type.

\pars The first objective of this paper is therefore to complete the theory of relative approximation degrees and constants by establishing the existence of \(h_K(X:f)\) and \(\b_K(X:f)\) in the setup of (ii). We do this over \textit{henselian} fields (see Section \ref{Section Z(f) has no limits}). Considering the \(Q\)-expansion of \(f\): 
\[  f = \sum_{i=0}^{r} f_iQ^i, \]
we overcome the earlier difficulty by a careful analysis of the \(Q\)-free term \(f_0\) and the remaining \(Q\)-divisible term along the pseudo-convergent sequence \(\C\).

\pars In the situation when \( \{ vf(z_\nu) \} \) is ultimately strictly increasing, we observe that \( h_K(X:f) \) coincides with the number of roots of \(f\), counted with multiplicity, which are also limits of \(\C\) (Theorem \ref{Thm h(f) = j_w*(f) when f has limits}). This motivates us to study the relative approximation degree in terms of the \textit{$j$-invariants} (see \cite{Dutta2024Invariant}) of certain auxiliary valuations arising naturally from the pseudo-convergent sequence \(\C\). A crucial property of the \(j\)-invariant is that it encodes information about the dominant term in the corresponding Taylor expansion (Corollary \ref{Coro j(f) when deg(Q)=1}). This allows us to reinterpret the relative approximation degree purely in terms of Taylor expansions of \(f\) along \(\C\) (Theorem \ref{Thm central j-invariant characterization}):

\begin{Theorem}
	Assume that \((K,v)\) is a henselian valued field and take \(f\in K[X] \setminus K\). Then there exists $\nu_0<\l$ such that for all \(\nu>\nu_0\), considering the Taylor expansion
	\[ f(X) - f(z_\nu) = \sum_{i=1}^{n} c_i (X-z_\nu)^i,  \]
	we have
	\begin{equation*}
		h_K(X:f) = j_{w_\nu}(f(X) - f(z_\nu)) = \max\left\{ i \ \middle|\ w_\nu (f(X) - f(z_\nu)) = w \left( c_i (X-z_\nu)^i \right) \right\}, 
	\end{equation*}
	and
	\begin{equation*}
		\b_K(X:f) = vc_h \text{ where } h=h_K(X:f).
	\end{equation*}
\end{Theorem}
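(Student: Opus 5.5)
The plan is to reduce everything to root counting over \(\overline K\), where both \(w_\nu\) and \(j_{w_\nu}\) behave additively. Here \(w_\nu\) is the monomial valuation on \(K[X]\) centred at \(z_\nu\) with weight \(\g_\nu := w(X-z_\nu)\):
\[ w_\nu\Bigl(\sum_i c_i (X-z_\nu)^i\Bigr)=\min_i\,(vc_i+i\g_\nu). \]
By the existence result of Section \ref{Section Z(f) has no limits} (and \cite[Lemmas 5.2, 5.4]{KuhlmannVlahu2014}), which I take as given, there are \(\nu_0\), \(h=h_K(X:f)\) and \(\b=\b_K(X:f)\) with \(w(f(X)-f(z_\nu))=\b+h\g_\nu\) for all \(\nu>\nu_0\). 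The second equality in the statement is just the definition of \(j_{w_\nu}\) as the largest index at which the minimum is attained. Once we show \(w_\nu(f(X)-f(z_\nu))=\b+h\g_\nu\) and \(j_{w_\nu}(f(X)-f(z_\nu))=h\), the index \(h\) attains the minimum. Hence \(vc_h+h\g_\nu=\b+h\g_\nu\), i.e.\ \(vc_h=\b\). So the whole content is these two equalities.

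\textbf{Step 1 (factorisation).} Extend \(w\) to \(\overline K(X)\) and extend \(w_\nu\) to \(\overline K[X]\) as the monomial valuation with the same centre and weight. Factor
\[ g_\nu(X):=f(X)-f(z_\nu)=a\prod_{k=1}^n (X-a_{k,\nu}) \]
over \(\overline K\). Since \(w_\nu\) is a valuation and \(j_{w_\nu}\) is additive on products (Corollary \ref{Coro j(f) when deg(Q)=1} and the multiplicativity of \(j\) from \cite{Dutta2024Invariant}), we get
\[ w_\nu(g_\nu)=va+\sum_k\min\{\g_\nu,\,v(z_\nu-a_{k,\nu})\}, \]
\[ j_{w_\nu}(g_\nu)=\#\{k : v(z_\nu-a_{k,\nu})\ge\g_\nu\}. \]
On the other hand \(w(g_\nu)=va+\sum_k w(X-a_{k,\nu})\). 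For each \(k\), \(w(X-a_{k,\nu})\ge\min\{\g_\nu,v(z_\nu-a_{k,\nu})\}\), with strict inequality only when \(v(z_\nu-a_{k,\nu})=\g_\nu\) and \(a_{k,\nu}\) is strictly closer to \(X\) than \(z_\nu\) is.

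\textbf{Step 2 (no roots too close to \(X\)).} I will show that for \(\nu\) large no root \(a_{k,\nu}\) satisfies \(w(X-a_{k,\nu})>\g_\nu\). Then \(w(g_\nu)=w_\nu(g_\nu)\), which is exactly the absence of cancellation in the Taylor expansion. The intended argument is by contradiction, playing off two values of \(\nu\). Suppose \(w(X-a)>\g_\nu\) for a root \(a\) of \(g_\nu\). Pick \(\mu>\nu\) with \(\g_\mu\) below \(w(X-a)\) but above \(\g_\nu\), using that \(\C\) has no limit in \(K\) and henselian Krasner-type control on how far conjugates can lie from \(\C\). Then \(f(a)=f(z_\nu)\), and \(a\) behaves like a limit of \(\C\) up to level \(\g_\mu\). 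Plugging \(a\) into the expansion of \(g_\mu\), or comparing \(w(f(X)-f(z_\mu))\) with \(w(f(X)-f(z_\nu))\), should force a jump in \(w(f(z_\mu)-f(z_\nu))\) that is incompatible with the linear law \(\b+h\g\) holding at both \(\nu\) and \(\mu\). Henselianity enters to guarantee that \(\C\) has no algebraic limits that are conjugates near \(X\), in the same way as in Section \ref{Section Z(f) has no limits}.

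\textbf{Step 3 (counting).} With \(w(g_\nu)=w_\nu(g_\nu)\), compare the formula \(va+\sum_k\min\{\g_\nu,v(z_\nu-a_{k,\nu})\}\) at two indices \(\nu<\mu\) beyond \(\nu_0\). The difference of the left sides is \(h(\g_\mu-\g_\nu)\). The right side grows by \(\g_\mu-\g_\nu\) for each root with \(v(z_\nu-a_{k,\nu})\ge\g_\nu\), namely the roots tracking \(\C\), and is eventually constant for the others. This yields \(\#\{k: v(z_\nu-a_{k,\nu})\ge\g_\nu\}=h\), i.e.\ \(j_{w_\nu}(g_\nu)=h\). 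This is consistent with Theorem \ref{Thm h(f) = j_w*(f) when f has limits} in the increasing case.

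The main obstacle is Step 2, together with the fact that the roots \(a_{k,\nu}\) move with \(\nu\), which makes the comparison in Step 3 delicate. I would first try to handle this by working with the fixed \(Q\)-expansion \(f=\sum f_iQ^i\) to control which roots stay close to \(\C\) uniformly in \(\nu\).
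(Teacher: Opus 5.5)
\textbf{Genuine gap.} Step 1 and your final deduction $\b_K(X:f)=vc_h$ are fine, and you correctly isolate the crux: you need $w_\nu(g_\nu)=w(g_\nu)=\b+h\g_\nu$ for all large $\nu$. The proposal does not prove either ingredient of this.

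First, you take the existence of $h_K(X:f)$ and $\b_K(X:f)$ as given. But in the only case not covered by \cite[Lemmas 5.2, 5.4]{KuhlmannVlahu2014} (namely $\{vf(z_\nu)\}$ ultimately stable and $\deg f>\deg\C$), that existence is exactly what the theorem establishes. In the paper it comes out of the same argument that yields your Step 2, so it cannot be quoted.

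Second, Step 2 is only a hope. The linear law at $\nu<\mu$ gives $v(f(z_\mu)-f(z_\nu))=\b+h\g_\nu$ exactly, so no ``jump'' can arise. This is fully compatible with cancellation: $w_\nu(g_\nu)<\b+h\g_\nu$ would merely mean that the reduced $w_\nu$-initial polynomial of $g_\nu$ vanishes at the residue of $(z_\mu-z_\nu)/e$ (where $ve=\g_\nu$), and that residue is the same for every $\mu>\nu$.

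Henselianity also enters in the opposite way to what you suggest. Over henselian $K$, \emph{every} root of $Q$ is a limit of $\C$, whence $w_\nu Q=vQ(z_\nu)=d\g_\nu$.

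The missing idea is the splitting $f=f_0+f^\prime$ along the $Q$-expansion, as in Section \ref{Section Z(f) has no limits}:
\begin{itemize}
\item On $f_0(X)-f_0(z_\nu)$ one has $w=w_\nu$ with slope $h_0\le\deg f_0<d$, since the Hasse--Schmidt derivatives of $f_0$ have ultimately stable values.
\item On $f^\prime(X)-f^\prime(z_\nu)$ one has $w=w_\nu$ with slope a positive multiple of $d$, since $w_\nu f^\prime=vf^\prime(z_\nu)<wf^\prime$, using $w_\nu Q=d\g_\nu$.
\item The lines $\b_0+h_0\g_\nu$ and $\b^\prime+h^\prime\g_\nu$ meet for at most one $\nu$. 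Beyond that index the two pieces cannot cancel, which gives existence and $w_\nu(g_\nu)=w(g_\nu)$ at once.
\end{itemize}
For the other cases the paper argues differently: when $\Z(f)\cap\Lim\C\neq\emptyset$ it uses $j_{w_\nu}(f)=j_{w^*}(f)=\#\Z(f)\cap\Lim\C$, and when $\deg f\le\deg\C$ a unique dominant Taylor term.

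Step 3 also fails as written. $\Z(g_\nu)$ and $\Z(g_\mu)$ are unrelated multisets, so ``roots tracking $\C$'' cannot be compared between $\nu$ and $\mu$. It can be repaired once Step 2 holds at all large indices, as follows.
\begin{itemize}
\item For $\nu<\mu$ write $g_\mu=\sum_i c_i^{(\mu)}(X-z_\mu)^i$.
\item Since $g_\mu=g_\nu-g_\nu(z_\mu)$ and $v(g_\nu(z_\mu))=\b+h\g_\nu=w_\nu(g_\nu)$, computing with centre $z_\nu$ gives $w_\nu(g_\mu)=\b+h\g_\nu$ and $j_{w_\nu}(g_\mu)=j_{w_\nu}(g_\nu)$.
\item Because $(z_\mu,\g_\nu)$ is also a minimal pair for $w_\nu$, this says $\min_i(vc_i^{(\mu)}+i\g_\nu)=\b+h\g_\nu$, with largest active index $J_\nu:=j_{w_\nu}(g_\nu)$.
\item Likewise $\min_i(vc_i^{(\mu)}+i\g_\mu)=\b+h\g_\mu$, with largest active index $J_\mu:=j_{w_\mu}(g_\mu)$.
\item Comparing the two minima at the index $J_\nu$ gives $J_\nu(\g_\mu-\g_\nu)\ge h(\g_\mu-\g_\nu)$. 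At the index $J_\mu$ it gives the reverse inequality.
\item Hence $J_\nu\ge h\ge J_\mu$ for all large $\nu<\mu$, and so $J_\nu=h$.
\end{itemize}
This is the same kind of comparison as at the end of the proof of Proposition \ref{Prop h(X:F) leq p}. So your framework can be completed, but only after the $Q$-expansion argument supplies existence and Step 2.
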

Here \(w_\nu\) denotes the monomial valuation determined by \(z_\nu\). In particular, this theorem shows that relative approximation degrees and constants exist for
every polynomial in the algebraic type setting over a henselian base field. Combining this description with the degree theorem of Kuhlmann-Vlahu gives the following extension of their result:

\begin{Theorem}\label{Thm K(X)^h:K(f)^h leq h}
	Assume that $(K,v)$ is a henselian valued field and take $f\in K[X]\setminus K$. Then
	\[ [K(X)^h : K(f(X))^h] \leq h_K(X:f).  \]
\end{Theorem}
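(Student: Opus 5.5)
The plan is to reduce the statement to Kuhlmann--Vlahu's degree bound, \cite[Theorem 9.1]{KuhlmannVlahu2014}. That theorem gives
\[ [K(X)^h:K(f(X))^h]\le h_K(X:f) \]
whenever the relative approximation degree $h_K(X:f)$ and constant $\b_K(X:f)$ exist. The only new input is therefore existence in the open case (ii): $\C$ of algebraic type and $\{vf(z_\nu)\}$ ultimately stable.

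First I would check the hypotheses under which the proof of \cite[Theorem 9.1]{KuhlmannVlahu2014} is carried out. Its argument roughly runs as follows. Set $Y=f(X)$; then $\{f(z_\nu)\}$ is a pseudo-convergent sequence with limit $Y$, and the identity
\[ w(f(X)-f(z_\nu))=\b+h\,w(X-z_\nu) \]
lets one compare the distances of $X$ and of its $K(Y)^h$-conjugates from $K$. Every conjugate $X'\ne X$ of $X$ over $K(Y)^h$ satisfies $f(X')=Y$. If $X'$ is also a limit of $\C$, it is counted among the at most $h$ ``close'' roots of $f(T)-Y$. If it is not a limit, the henselian property of $K(Y)^h$ (Krasner-type arguments) shows that it cannot be conjugate to $X$. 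Thus the degree is at most $h$. I would verify that this argument uses only the existence of $h$ and $\b$ along a tail of $\C$, not transcendental type. If Kuhlmann--Vlahu phrase the theorem only for the cases they had covered, I would re-run their argument verbatim, now citing our existence result.

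Second, I would supply existence. This is exactly the preceding theorem, Theorem \ref{Thm central j-invariant characterization}: over henselian $(K,v)$, for $\nu>\nu_0$ we have $h_K(X:f)=j_{w_\nu}(f(X)-f(z_\nu))$ and $\b_K(X:f)=vc_h$. That result is what I would rely on. Internally it rests on the $Q$-expansion $f=\sum f_iQ^i$: one treats the $Q$-free part $f_0$ (degree $<\deg\C$, covered by \cite[Lemma 5.2]{KuhlmannVlahu2014}) separately from the $Q$-divisible part, whose values increase strictly along $\C$ and so eventually do not dominate. When $\{vf(z_\nu)\}$ is ultimately strictly increasing, I would instead invoke \cite[Lemma 5.4]{KuhlmannVlahu2014}, or Theorem \ref{Thm h(f) = j_w*(f) when f has limits}.

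Combining the two steps yields the inequality. The main obstacle is the first step: making sure that the Kuhlmann--Vlahu degree argument does not silently use transcendental type. For example, it may use that $K(X)|K$ is immediate with $X$ a limit of a sequence having no limit in $K$, which still holds here. It may also use that the Taylor coefficients $vc_i$ are ultimately constant, which fails in the algebraic case. In the latter situation I would replace that ingredient by the $j$-invariant description. Since $j_{w_\nu}$ picks out the dominant monomial $c_h(X-z_\nu)^h$ for every large $\nu$, the required root-counting holds for each fixed large $\nu$ separately, and no uniform stability of the coefficients is needed.
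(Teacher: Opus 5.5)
Your proposal is correct and is essentially the paper's argument. Rather than citing \cite[Theorem 9.1]{KuhlmannVlahu2014} as a black box, the paper re-runs its Hensel-lemma argument at a fixed $\nu$. It uses Corollary \ref{Coro j(f) when deg(Q)=1}: the index $h=j_{w_\nu}(f(X)-f(z_\nu))$ minimizes $vc_i+i\g_\nu$, and strictness is only needed for $i>h$. This gives $[K(X)^h:K(f(X))^h]\le j_{w_\nu}(f(X)-f(z_\nu))$ for every $\nu$, and Theorem \ref{Thm central j-invariant characterization} then identifies this with $h_K(X:f)$, which is exactly your fallback. Your conjugate-counting phrasing is equivalent to the paper's strong-Hensel factorization $g=g_1g_2$ with $\deg g_1=h$: conjugates of $X$ over the henselian field $K(f(X))^h$ keep $w(\,\cdot\,-z_\nu)=\g_\nu$, and the Newton polygon of $f(T)-f(X)$ at $z_\nu$ admits only $h$ roots in that ball.
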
 


\subsection{Artin-Schreier reduction over perfect fields} In Kuhlmann's framework, the corresponding pseudo-convergent sequence is necessarily of transcendental type \cite[Lemma 4.6]{Kuhlmann2019EliminationII}. Given an arbitrary polynomial \( f \), this allows him to construct a suitable \textit{normal form} for \( f \) \cite[Lemma 4.2]{Kuhlmann2019EliminationII}, from which he obtains an Artin-Schreier equivalent polynomial whose relative approximation degree is one. However, this method does not extend to the case where $\mathcal{C}$ is of algebraic type, since the construction of the normal forms relies fundamentally on the pseudo-convergent sequence to be of transcendental type. 

\pars In the general scenario, a straightforward operation yields an Artin-Schreier equivalent polynomial \(F\in K[X]\) satisfying 
\[  h_K(X:F) \in \{1,p\}  \]
(see Proposition \ref{Prop h(X:F) leq p}). Moreover, 
\[ h_K(X:F) = 1 \text{ whenever } w\left( f(X) - f(z_\nu) \right)  > 0  \text{ eventually}.  \]
However, the complementary case proves to be the essential difficulty in obtaining henselian rationality over perfect fields. In Section \ref{Sect example}, we construct an example illustrating the following:
\begin{Theorem}
	There exists an explicit immediate extension of function fields \((K(X)|K,w)\) of transcendence degree one such that \((K,v)\) is a henselian perfect valued field of rank one, and a polynomial \(f(X)\in K[X]\) such that 
	\[ h_K(X:f) = p \le h_K(X:F) \text{ for all } F\equiv f \bmod \P(K[X]).  \]
	Furthermore, after fixing an extension of \(w\) to \(\overline{K(X)}\), there exists a polynomial \(G\in K[X]\) such that 
	\[ G\equiv f \bmod \P(K(X)^h) \text{ and } h_K(X:G)=1.  \]
	Consequently, if \(\th^p-\th = f\), then the extension
	\[ \left( K(X,\th)|K, w \right) \text{ is henselian rational}.   \]
\end{Theorem}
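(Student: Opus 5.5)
The plan is to build the example directly and check its three claims in order. Over the base field I take \(K\) to be the perfect hull of the henselization of \(\FF_p(t)\) with the \(t\)-adic valuation, or some rank-one henselian perfect subfield of \(\widehat{\overline{\FF_p(t)}}\). Inside it I choose a pseudo-convergent sequence \(\C=\{z_\nu\}\) of algebraic type whose associated minimal polynomial is an Artin--Schreier polynomial. Concretely, let \(\vartheta\) be a root of \(Q(X)=X^p-X-t^{-1}\) and let \(z_\nu\) be the truncations of the standard expansion \(\vartheta=\sum_{i\ge1} t^{-1/p^i}\). This sequence is pseudo-convergent with no limit in \(K\), provided we choose \(K\) so that \(\vartheta\notin K\), which is the usual defect Artin--Schreier phenomenon. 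I then let \(X\) be transcendental, with \(w\) the valuation on \(K(X)\) induced by \(\C\) via Kaplansky's construction. This makes \((K(X)|K,w)\) immediate with \(\deg\C=p\). The polynomial to use is \(f=Q\) itself, or a small modification of it.

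For \(h_K(X:f)=p\), I would apply Theorem \ref{Thm central j-invariant characterization}. We have \(f(X)-f(z_\nu)=(X-z_\nu)^p-(X-z_\nu)\), so \(c_1=-1\) and \(c_p=1\). Since \(w(X-z_\nu)<0\) eventually, the \(p\)-th power dominates and \(j=p\). For the lower bound over all \(F=f+g^p-g\), I would use the \(Q\)-expansion analysis. Write \(g\) in its \(Q\)-adic expansion and compare \(w(F(X)-F(z_\nu))\) with \(w(X-z_\nu)\). The dominant term of \(g(X)^p-g(z_\nu)^p\) is a \(p\)-th power, so it has degree divisible by \(p\) in \(X-z_\nu\). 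I expect the \(\max\)-index \(j\) of Theorem \ref{Thm central j-invariant characterization} to be a multiple of \(p\) whenever the term \(-(X-z_\nu)\) from \(f\) cannot become dominant. That in turn happens because \(\mathrm{val}(F(X)-F(z_\nu))\le 0\) eventually, which is exactly the complementary case to Proposition \ref{Prop h(X:F) leq p}. Ruling out cancellation between \(f\) and \(\P(g)\) in the negative-value range is the main obstacle. I would first try to show that the pure \(p\)-th power part of \(F\) cannot cancel \((X-z_\nu)^p\) modulo terms of nonnegative value. The reason would be that such a cancellation forces \(g\) to approximate a \(p\)-th root related to \(\vartheta\) better than any element of \(K\) does, contradicting the fact that \(\C\) has no limit in \(K\).

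For the second part, I work in \(K(X)^h\), where \(Q(X)\) has a root structure available through Hensel's lemma. Set \(Y=X-\vartheta'\), with \(\vartheta'\) chosen as a suitable element of \(K(X)^h\). Then \(f(X)=Y^p-Y+\) (a correction), and modulo \(\P(K(X)^h)\) this is equivalent to an element \(G\) whose relative approximation degree is \(1\). One natural candidate is the \(p\)-th root trick \(Y^p\equiv Y\), which gives \(G=\)a linear expression. I would then use density of \(K[X]\) in \(K(X)^h\), valid in rank one, together with Krasner-type continuity to replace this element by a polynomial \(G\in K[X]\) with \(G\equiv f\) and \(h_K(X:G)=1\). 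Finally, \(h_K(X:G)=1\) gives \(K(X)^h=K(G)^h\) by Theorem \ref{Thm K(X)^h:K(f)^h leq h}. Then, exactly as in the introduction, \(K(X,\th)^h=K(X)^h(\a)=K(\a)^h\) for \(\a^p-\a=G\), which yields henselian rationality.
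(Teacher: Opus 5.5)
\textbf{There is a genuine gap: with your choice \(f=Q\) the first assertion is false, and the ``small modification'' is left unspecified.}

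Take \(f=Q=X^p-X-1/t\). Then \(Q-\P(X)=-1/t\), so
\[ F:=Q+\P\bigl((t-1)X\bigr)=t^pX^p-tX-\tfrac{1}{t}\equiv f \bmod \P(K[X]). \]
Here \(F(X)-F(z_\nu)=t^p(X-z_\nu)^p-t(X-z_\nu)\), and the two terms have values \(p(1+\g_\nu)>1+\g_\nu>0\). Hence \(w(F(X)-F(z_\nu))=1+\g_\nu\) and \(h_K(X:F)=1\).

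The same example refutes your heuristic for the lower bound. Since \(K\) is perfect, every monomial \(c(X-z)^p\) is Artin--Schreier equivalent to \(c^{1/p}(X-z)\). So the \(p\)-th power term can always be cancelled exactly, and no approximation of \(\vartheta\) by elements of \(K\) is involved. Also, the sign of \(w(f(X)-f(z_\nu))\) is not an invariant of the class modulo \(\P(K[X])\): \(Q\) itself is in the case \(\beta+h\g_\nu<0\), yet it is equivalent to \(F\) above. So all of the content of the first assertion sits in the modification you do not specify.

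The paper's choice is \(f=(t^{-1/p}-t^{-1})X+bX^2+X^{p+1}\). It is engineered so that at \(z_n\) one has \(c_p=z_n\), \(c_2=b\), \(c_{p+1}=1\), and \(c_1^p+z_n=Q(z_n)^p-Q(z_n)\), which has value \(p^2\g_n\). In words: reducing the \(p\)-th monomial to a linear one almost, but not quite, cancels \(c_1\). The lower bound is then a coefficient chase in the Taylor expansion of \(F(X)-F(z_n)\) for \(F=f-\P(g)\):
\begin{enumerate}[label=(\roman*)]
\item \(j_{w_n}=1\) forces \(v(c_1+d_1)<p\g_n\), using the coefficients of degrees \(p^r(p+1)\);
\item then \(vd_p=pv(c_1+d_1)\);
\item dominance of the linear term then gives \(p^{i-1}vd_p<v(d_{p^i}-d_{p^{i-1}}^p)\) for all \(i>1\);
\item this fails at the first \(i\) with \(d_{p^i}=0\).
\end{enumerate}
Nothing in your sketch replaces this.

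\textbf{The construction of \(X\) and the second assertion also do not go through as written.}

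Kaplansky's Theorem 2 gives an immediate transcendental extension only for sequences of transcendental type. For the algebraic-type \(\C\), the values \(vQ(z_\nu)\) never stabilize, so there is no valuation on \(K(X)\) ``induced by \(\C\)''. Kaplansky's Theorem 3 only yields the algebraic extension \(K(\vartheta)\).

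The paper instead makes \(X\) a limit of a Cauchy sequence that extends \(\C\) past \(\vartheta\) by a transcendental \(\eta\in\FF_p((t))\). It then checks immediacy separately, via Alexandru--Popescu--Zaharescu. This construction also gives \(wQ=i_1\ge 1>0\), and that is what drives the second assertion:
\begin{enumerate}[label=(\roman*)]
\item Choose \(c\in K\) with \(-1<vc<-1/p^3\) and set \(g=f+cQ\). Then \(w(g-f)>0\), so \(g\equiv f\bmod \P(K(X)^h)\) by Hensel's Lemma.
\item At \(z_n\) with \(n\ge 4\), the \(p\)-th coefficient of \(g\) becomes \(z_n+c\). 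Replacing \((z_n+c)(X-z_n)^p\) by \((z_n+c)^{1/p}(X-z_n)\) gives a polynomial \(G\).
\item The linear coefficient of \(G\) is \((Q(z_n)-c)-(Q(z_n)-c)^{1/p}\), of value \(vc\). This term strictly dominates, so \(h_K(X:G)=1\).
\end{enumerate}
No density or Krasner-type approximation is needed.

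Your step ``\(Y=X-\vartheta'\)'' would itself need something like \(wQ>0\) to put a root of \(Q\) in \(K(X)^h\), and your construction does not secure this. It also never specifies \(G\) or shows that \(h_K(X:G)=1\).
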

The two conclusions of the theorem highlight an important distinction: the obstruction concerns representatives of the Artin-Schreier class of \(f\) modulo \(\P(K[X])\); it does not manifest after passage to the larger equivalence class modulo \(\P(K(X)^h)\). 

\pars Our construction of the polynomial \(G\) suggests a possible route bypassing the obstruction. Namely, one may first seek a polynomial \(g\) such that \(w(g-f)>0\), so that \(g\equiv f \bmod \P(K(X)^h)\), and then attempt to find a representative of the class of \(g\) modulo \(\P(K[X])\) having relative approximation degree one. In our example, the required perturbation is furnished by a suitable scalar multiple of the associated minimal polynomial \(Q\). However, we do not presently know how to produce such a perturbation in general. We therefore pursue a complementary approach in Section \ref{Section henselian rationality over IC}: enlarging the constant field inside the henselization until the relevant pseudo-convergent sequence becomes of transcendental type. 


\subsection{Henselian rationality over the relative algebraic closure} In the rank-one perfect field setting, henselian rationality can be pulled down through \textit{finite} extensions (Lemma \ref{Lemma henselian ratioanlity pull down finite extns}). This observation suggests replacing \(K\) temporarily by a suitable algebraic extension contained in \(F^h\), proving henselian rationality over the enlarged constant field, and then investigating whether the resulting extension of \(K\) is finite.

\pars The natural fields arising in this context are the \textit{implicit constant fields} introduced by Kuhlmann \cite{Kuhlmann2004BadPlaces}. For a separating transcendental element \(X\in F\setminus K\), set
\[ I:=IC(K(X)|K,w), \]
the relative algebraic closure of \(K\) in \(K(X)^h\). We also set
\[ L:=IC(F|K,w), \]
the relative algebraic closure of \(K\) in \(F^h\). Unlike \(I\), the field \(L\) depends only on the extension \((F|K,w)\) and not on the choice of \(X\).

\pars Our principal observation is that whenever \(K\) equals its absolute ramification field, the pseudo-convergent sequence corresponding to the extension \((I(X)|I,w)\) is of transcendental type (Proposition \ref{Prop pcs tr type over IC}). Thus passage from \(K\) to \(I\) removes the algebraic type obstruction arising in the earlier framework. This restores precisely the hypothesis required in Kuhlmann's normal-form argument. As a consequence, we obtain in Proposition \ref{Prop henselian rationality over IC} that
\[ (F^h|I,w) \text{ is henselian rational}.  \]
We further observe that \(L|I\) is a finite extension. Thus the finiteness of \(I\) over \(K\) is a property of the extension \((F|K,w)\) itself, and is independent of the choice of the separating transcendental element \(X\). Combining these observations, we arrive at the following central result: 

 \begin{Theorem}\label{Thm henselian rationality over IC}
 	Assume that \((K,v)\) is a perfect valued field of rank one. Furthermore, assume that \(K\) equals its absolute ramification field. Set \(L\) to be the relative algebraic closure of \(K\) in \(F^h\). Then,
 	\[  (F^h|L,w) \text{ is henselian rational}.  \]
 	Moreover, 
 	\[  [L:K] < \infty \Longrightarrow (F|K,w) \text{ is henselian rational}. \]
 \end{Theorem}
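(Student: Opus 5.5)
The plan is to assemble three ingredients announced in the introduction: Proposition \ref{Prop pcs tr type over IC}, Proposition \ref{Prop henselian rationality over IC}, and Lemma \ref{Lemma henselian ratioanlity pull down finite extns}.

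First fix a separating transcendental element \(X\in F\setminus K\) and set \(I:=IC(K(X)|K,w)\). Since \(I\subseteq K(X)^h\subseteq F^h\), we get \(K\subseteq I\subseteq L\).

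\textbf{Step 1: henselian rationality over \(I\).} Because \(K\) equals its absolute ramification field, Proposition \ref{Prop pcs tr type over IC} shows that the pseudo-convergent sequence attached to \((I(X)|I,w)\) is of transcendental type. This is exactly the hypothesis of Kuhlmann's normal form \cite[Lemma 4.2]{Kuhlmann2019EliminationII}. Hence, for each Artin-Schreier step \(\th_i\) in the tower from \(I(X)^h\) up to \(F^h\) (after density reductions in rank one), we can replace the defining polynomial by an Artin-Schreier equivalent one of relative approximation degree one. Theorem \ref{Thm K(X)^h:K(f)^h leq h} then collapses the tower one step at a time. This is Proposition \ref{Prop henselian rationality over IC}, giving \((F^h|I,w)\) henselian rational, say \(F^h=I(Y)^h\).

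\textbf{Step 2: passing from \(I\) to \(L\).} I would show that \(L|I\) is finite. The field \(L\) lies in \(F^h\), which is a finite extension of \(I(X)^h=K(X)^h\). Any subfield of \(F^h\) algebraic over \(I\) is linearly disjoint from \(I(X)^h\) over its relative algebraic closure \(I\). Hence \([L:I]=[L(X)^h:I(X)^h]\le [F^h:K(X)^h]<\infty\). Since \(L\subseteq F^h\) and \(F^h=I(Y)^h\subseteq L(Y)^h\subseteq F^h\), we obtain \(F^h=L(Y)^h\), so \((F^h|L,w)\) is henselian rational. This proves the first assertion. Here we use that the henselization of \(L(Y)\) inside \(F^h\) is the relative henselization, and that \(F^h\) is henselian.

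\textbf{Step 3: descent.} If \([L:K]<\infty\), then \(L|K\) is a finite extension of a perfect rank-one field, and Lemma \ref{Lemma henselian ratioanlity pull down finite extns} pulls henselian rationality down from \(L\) to \(K\). This gives some \(Z\in F^h\) with \(F^h=K(Z)^h\). Any needed hypotheses, such as the immediacy of \((F^h|L,w)\) and \((F|K,w)\), follow from \(F\) being immediate over \(K\).

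\textbf{Main obstacle.} I expect the main obstacle to be Step 1, specifically the reduction to the Artin-Schreier tower over \(I(X)^h\). That reduction passes to extensions to make \(F^h|K(X)^h\) a tower of Artin-Schreier extensions, and these enlargements must not destroy either the transcendental type property or the ramification-field hypothesis on the base. I would first try to do the lifting inside the absolute ramification field, where the Galois group is a \(p\)-group. There, a finite separable extension decomposes into Artin-Schreier steps after a tame base change. The tame part is then trivial because \(K\) already equals its absolute ramification field.

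The linear disjointness claim in Step 2 also needs a careful check. If it fails, I would instead bound \([L:I]\) by the degree of the implicit constant field of the finite extension \(F^h|I(X)^h\).
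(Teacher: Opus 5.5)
Your proposal is correct and follows the paper's proof: Proposition \ref{Prop henselian rationality over IC} gives $F^h=I(Y)^h$, the sandwich $I(Y)^h\subseteq L(Y)^h\subseteq F^h$ gives the first assertion, and Lemma \ref{Lemma henselian ratioanlity pull down finite extns} handles the descent when $[L:K]<\infty$. Two parts of your Step 2 and ``main obstacle'' discussion are unnecessary: the sandwich does not use finiteness of $L|I$ (the paper states it only as a separate proposition), and no lifting is needed because $K=K^r$ already makes $wK(X)$ divisible and $K(X)w$ algebraically closed, so $F^h|K(X)^h$ is directly a tower of Artin-Schreier extensions.
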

Thus after passage to the absolute ramification field, the problem reduces to understanding whether henselian rationality can be pulled down through the algebraic extension \(L|K\). A successful resolution would yield that henselian rationality holds over rank-one perfect valued fields coinciding with their absolute ramification field. This would be an analogue of Step \ref{Step B} in Kuhlmann's program. Observe that this is a strict generalization, since there are non-algebraically closed perfect valued fields of rank one equalling their absolute ramification fields (Example \ref{Eg K not alg closed}). 

\pars  As observed in Theorem \ref{Thm henselian rationality over IC}, the pull-down principle survives when \(L|K\) is a finite extension. In Proposition \ref{Prop Type II}, we verify this finiteness criterion for the class of Type II extensions introduced in \cite{Dutta2023MathNach}. The extension \((K(X)|K,w)\) is said to be of Type II if \(X\) is a limit of a Cauchy sequence in the algebraic closure \(\overline{K}\). This yields a general henselian rationality theorem for immediate function fields admitting a separating transcendental element of Type II.

\pars We conclude with a brief outline of the paper. Section \ref{Section Prelims} develops the necessary background on pseudo-convergent sequences, minimal pairs of definition and the \(j\)-invariant. Section \ref{Section Z(f) has limits} establishes the characterization of \(h_K(X:f)\) and \(\b_K(X:f)\) in the algebraic type setting when the value of \(f\) is ultimately strictly increasing along \(\C\), while the remaining case is considered in Section \ref{Section Z(f) has no limits}. In Section \ref{Section K(X)^h:K(f)^h leq h}, we derive the henselian degree bound. Section \ref{Section A-S reduction} studies the effect of Artin-Schreier equivalence over perfect fields. Section \ref{Sect example} constructs the aforementioned example showing that the bound \(p\) is sharp, and then proves that the associated Artin-Schreier function field is nevertheless henselian rational. Finally, Section \ref{Section henselian rationality over IC} develops the implicit constant field approach to henselian rationality and proves the finite descent and Type II results described above. Section \ref{Section open problems} isolates the remaining Type I pull-down problem.

\section{Preliminaries} \label{Section Prelims}
We fix some notations that will be used throughout Sections \ref{Section Prelims}--\ref{Section A-S reduction}. Let \( (K(X)|K,w) \) be an immediate extension of rational function fields. Fix an extension of \(w\) to \(\overline{K}(X)\), denoted again by \(w\). Its restriction to \(\overline{K}\) will still be denoted by \(v\). Given any polynomial \( f\in K[X] \), denote by \( \Z(f) \) the \textit{multiset} of all roots of \(f\). For any multiset \(S\), we denote its cardinality by \(\# S\).

\subsection{Pseudo-convergent sequences}\label{subsection pcs} A sequence \( \{z_\nu\}_{\nu<\l} \) in \( (K,v) \), where \( \l \) is a limit ordinal, is said to be a \textbf{pseudo-convergent sequence} if 
\[  v\left( z_{\nu} - z_\mu  \right) < v\left( z_\mu - z_\rho \right)  \]
for all \( \nu<\mu<\rho<\l \). It follows from the triangle inequality that
\[  v\left( z_{\nu} - z_\mu  \right) = v\left( z_\nu - z_{\nu + 1}  \right)  \]
for all \( \nu<\mu<\l \). 

\pars Since $(K(X)|K,w)$ is immediate, there exists, by \cite[Theorem 1]{Kaplansky1942}, a pseudo-convergent sequence $\mathcal{C}:= \{z_\nu\}_{\nu<\l} \subset K$ without a limit in $K$ and with $X$ as a limit. Set 
\[ \g_\nu:= v(z_{\nu} - z_{\nu +1})  \]
for all $\nu<\l$. An element $Y\in\overline{K(X)}$ is said to be a \textbf{limit} of $\mathcal{C}$ if and only if 
\[ w(Y-z_\nu) = \g_\nu \text{ for all } \nu<\l.  \]
From the triangle inequality, it then follows that 
\[ Y\in\Lim\mathcal{C} \Longleftrightarrow w(Y-X) \geq \g_\nu \text{ for all } \nu<\l. \]
We denote the set of all limits of $\mathcal{C}$ in $\overline{K(X)}$ by $\Lim\mathcal{C}$. The following observation is immediate from the monotonicity of $\{\g_\nu\}_{\nu<\l}$:

\begin{Lemma}\label{Lemma Y not limit eqv condns}
	Take any $Y\in\overline{K(X)}$. Then the following are equivalent:
	\begin{enumerate}[label=(\roman*)]
		\item $Y\notin\Lim\mathcal{C}$,
		\item the sequence $\{w(Y-z_\nu)\}_{\nu<\l}$ is ultimately constant,
		\item there exists an ordinal $\nu_0<\l$ such that $w(Y-z_\nu) < \g_\nu$ for all $\nu>\nu_0$.
	\end{enumerate}
\end{Lemma}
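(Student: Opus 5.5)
We prove the equivalences by the cycle (iii) \(\Rightarrow\) (ii) \(\Rightarrow\) (i) \(\Rightarrow\) (iii). Throughout we use that \(\{\g_\nu\}_{\nu<\l}\) is strictly increasing. This holds because \(\g_\nu = v(z_\nu - z_\mu)\) for every \(\mu>\nu\), and the pseudo-convergence inequality then gives \(\g_\nu < \g_\mu\) whenever \(\nu<\mu\).

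\textbf{(iii) \(\Rightarrow\) (ii).} Take \(\nu_0<\nu<\mu<\l\), so that \(w(Y-z_\nu) < \g_\nu\). We write
\[ Y - z_\mu = (Y-z_\nu) + (z_\nu - z_\mu). \]
The second summand has value \(\g_\nu\), which is strictly larger than the value of the first. By the strict triangle inequality, \(w(Y-z_\mu) = w(Y-z_\nu)\). Hence the sequence \(\{w(Y-z_\nu)\}\) is constant beyond \(\nu_0\).

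\textbf{(ii) \(\Rightarrow\) (i).} Suppose \(w(Y-z_\nu) = \delta\) for all \(\nu>\nu_0\). If \(Y\) were a limit of \(\mathcal{C}\), we would have \(\g_\nu = \delta\) for all \(\nu>\nu_0\). This contradicts the strict monotonicity of \(\{\g_\nu\}\), since \(\l\) is a limit ordinal and so there are at least two such \(\nu\).

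\textbf{(i) \(\Rightarrow\) (iii).} This is the only step requiring a small argument. Assume \(Y\notin\Lim\mathcal{C}\). Then there is some \(\nu_1\) with \(w(Y-z_{\nu_1}) \neq \g_{\nu_1}\). Fix \(\mu > \nu_1\) and write
\[ Y - z_{\nu_1} = (Y - z_\mu) + (z_\mu - z_{\nu_1}), \]
where the second summand has value \(\g_{\nu_1}\).

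First suppose \(w(Y-z_\mu) \ge \g_\mu\) for some \(\mu>\nu_1\). Then \(w(Y-z_\mu) > \g_{\nu_1}\), and the triangle inequality gives \(w(Y-z_{\nu_1}) = \g_{\nu_1}\), a contradiction.

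Hence \(w(Y-z_\mu) < \g_\mu\) for all \(\mu>\nu_1\), which is (iii) with \(\nu_0 := \nu_1\). The main point to watch is exactly this dichotomy: a failure of the limit condition at a single index \(\nu_1\) must propagate to all larger indices, and the triangle inequality applied with \(\g_{\nu_1}<\g_\mu\) is what forces it.
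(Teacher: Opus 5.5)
Your proof is correct and matches the paper's approach. The paper gives no written proof and only calls the lemma immediate from the strict monotonicity of $\{\gamma_\nu\}_{\nu<\lambda}$; your cycle of implications supplies exactly those triangle-inequality details.
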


As a corollary we obtain the following result which was proved in \cite[Lemma 9.1]{Dutta2021}:

\begin{Lemma}\label{Lemma vf(z) increasing iff Z(f) has limits}
	Take any $f\in K[X]$. Then $\{vf(z_\nu)\}_{\nu<\l}$ is ultimately strictly increasing if and only if $\Z(f)\sect\Lim\mathcal{C}\neq\emptyset$. 
\end{Lemma}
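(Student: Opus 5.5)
We factor $f$ over $\overline{K}$ and use Lemma \ref{Lemma Y not limit eqv condns} to analyse each linear factor separately. Write
\[ f(X)=c\prod_{a\in\Z(f)}(X-a),\qquad c\in K^\times, \]
with roots counted with multiplicity. Then
\[ vf(z_\nu)=vc+\sum_{a\in\Z(f)} v(z_\nu-a) \]
for every $\nu<\l$. So it suffices to understand each sequence $\{v(z_\nu-a)\}_{\nu<\l}$ for $a\in\overline{K}$. The element $a$ lies in $\overline{K}\subseteq\overline{K(X)}$, and $w$ extends $v$ on $\overline{K}$, so Lemma \ref{Lemma Y not limit eqv condns} applies with $Y=a$.

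\emph{Case $a\notin\Lim\mathcal{C}$.} By (ii) of Lemma \ref{Lemma Y not limit eqv condns}, the sequence $v(a-z_\nu)$ is ultimately constant.

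\emph{Case $a\in\Lim\mathcal{C}$.} Then $v(a-z_\nu)=\g_\nu$ for all $\nu<\l$. The sequence $\g_\nu$ is strictly increasing: for $\nu<\mu$, taking $\rho>\mu$ in the defining inequality of a pseudo-convergent sequence gives $\g_\nu=v(z_\nu-z_\mu)<v(z_\mu-z_\rho)=\g_\mu$.

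Let $m=\#(\Z(f)\sect\Lim\mathcal{C})$, counted with multiplicity. Since $\Z(f)$ is finite, we may take the maximum of the finitely many thresholds from the first case and obtain an ordinal $\nu_0<\l$ and a constant $\d$ such that
\[ vf(z_\nu)=\d+m\g_\nu \quad\text{for all } \nu>\nu_0. \]

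Both directions now follow. If $\Z(f)\sect\Lim\mathcal{C}\ne\emptyset$, then $m\ge1$, so $\d+m\g_\nu$ is strictly increasing in $\nu$, and hence $vf(z_\nu)$ is ultimately strictly increasing. If $\Z(f)\sect\Lim\mathcal{C}=\emptyset$, then $m=0$ and $vf(z_\nu)$ is ultimately constant, so it is not ultimately strictly increasing.

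There is no real obstacle here. The only points needing care are that the finitely many stabilisation ordinals have a common upper bound below the limit ordinal $\l$, and that $f$ is nonzero, so that $vc$ and the factorisation make sense. For the zero polynomial, $\Z(f)$ is not a well-defined multiset; this case must be excluded, and the statement is understood for $f\neq0$.
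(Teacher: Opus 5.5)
Your proof is correct and follows the intended route. The paper gives no separate argument: it presents this lemma as a direct corollary of Lemma \ref{Lemma Y not limit eqv condns}, citing an earlier reference. Your factorization into linear factors, with each factor's value either ultimately constant or equal to $\g_\nu$, is exactly the derivation the paper itself uses in the proofs of Lemma \ref{Lemma w_nu f = vf(z_nu)} and Theorem \ref{Thm h(f) = j_w*(f) when f has limits}.
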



\subsection{Minimal pairs and the \(j\)-invariant} We take a brief detour at this juncture to give a quick introduction to the theory of minimal pairs and the \(j\)-invariant. Any extension \(w\) of \(v\) to \(K(X)\) satisfies the \textbf{Abhyankar inequality}:
\[ \dim_\QQ \left( wK(X)/vK \right) + \trdeg [K(X)w:Kv] \leq 1.  \]
This is a consequence of Theorem 1 of \cite[\S 10.3, Chapter VI]{Bourbaki1989}. If \(w\) admits equality in the above inequality, then it is determined by a pair \( (a,\g) \) in the following sense: take an extension of \( w \) to \( \overline{K}(X) \), which we again denote by \(w\). Then there exists \(a\in\overline{K}\) and \(\g\in w\overline{K}(X)\) such that for any polynomial \(f(X)\in\overline{K}[X]\), we have 
\[ f(X) = \sum_{i=0}^{n} c_i (X-a)^i \Longrightarrow wf = \min\{ vc_i + i\g \}.  \]
In this case we write \(w = v_{a,\g}\) and say that \( (a,\g) \) is a \textbf{pair of definition for \( (K(X)|K,w) \)}. The valuation \(w\) may admit several pairs of definition. Indeed, we have the following from \cite[Proposition 3]{AlexandruZaharescu1988}:
\begin{Proposition}\label{Prop pair of defn}
	Take \( a_1,a_2\in\overline{K} \) and \( \g_1, \g_2 \) in some ordered abelian group containing \(v\overline{K} \). Then,
	\[ v_{a_1,\g_1} = v_{a_2,\g_2} \Longleftrightarrow \g_1 = \g_2 \text{ and } v(a_1 - a_2)\geq\g_1.  \]
\end{Proposition}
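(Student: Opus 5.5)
The plan is to work directly with the defining formula for \(v_{a,\g}\) on \(\overline{K}[X]\), using only that \(v_{a,\g}\) is a valuation. Since a valuation on \(\overline{K}(X)\) is determined by its values on \(\overline{K}[X]\), it suffices to compare \(v_{a_1,\g_1}\) and \(v_{a_2,\g_2}\) on polynomials. Throughout, set \(\d := v(a_1-a_2)\).

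For the implication \((\Rightarrow)\), I evaluate both valuations on the two linear polynomials \(X-a_1\) and \(X-a_2\). The expansion of \(X-a_1\) at \(a_2\) is
\[ X-a_1 = (X-a_2) + (a_2-a_1), \]
so the defining formula gives \(v_{a_2,\g_2}(X-a_1) = \min\{\g_2,\d\}\), while \(v_{a_1,\g_1}(X-a_1)=\g_1\). Symmetrically, \(v_{a_1,\g_1}(X-a_2)=\min\{\g_1,\d\}\) and \(v_{a_2,\g_2}(X-a_2)=\g_2\). If the two valuations coincide, then
\[ \g_1=\min\{\g_2,\d\} \quad\text{and}\quad \g_2=\min\{\g_1,\d\}. \]
Suppose \(\g_1<\g_2\). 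The first equation then forces \(\g_1=\d\), and the second gives \(\g_2=\min\{\g_1,\g_1\}=\g_1\), a contradiction. By symmetry \(\g_1=\g_2=:\g\), and then \(\g=\min\{\g,\d\}\) yields \(\d\ge\g\). The point to be careful about here is that the formula for \(v_{a,\g}\) takes the minimum of the values of the Taylor coefficients, so no cancellation can occur. This is what justifies reading off \(\min\{\g_2,\d\}\) exactly rather than just a lower bound.

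For \((\Leftarrow)\), assume \(\g_1=\g_2=\g\) and \(\d\ge\g\). Take \(f=\sum_i c_i(X-a_1)^i\). Since \(v_{a_2,\g}\) is a valuation, \(v_{a_2,\g}(X-a_1)=\min\{\g,\d\}=\g\), and therefore
\[ v_{a_2,\g}(c_i(X-a_1)^i)=vc_i+i\g. \]
The ultrametric inequality then gives
\[ v_{a_2,\g}(f)\ge\min_i\{vc_i+i\g\}=v_{a_1,\g}(f). \]
Exchanging the roles of \(a_1\) and \(a_2\), which is legitimate because the hypothesis \(\d\ge\g\) is symmetric, gives the reverse inequality, so the two valuations are equal. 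Equivalently, one could expand \((X-a_1)^i\) binomially in powers of \(X-a_2\) and check that every term has value at least \(vc_i+i\g\); the symmetric argument avoids that computation.

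The only step that needs real care is the exact evaluation \(v_{a_2,\g_2}(X-a_1)=\min\{\g_2,\d\}\) in the forward direction. It rests on the fact that \(v_{a,\g}\) is well defined by the minimum over Taylor coefficients, which is the standing definition of a pair of definition recalled just before Proposition \ref{Prop pair of defn}. Apart from that point, everything above is routine manipulation of the defining formula and the ultrametric inequality.
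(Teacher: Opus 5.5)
Your proof is correct, and it takes a different route from the paper only in the sense that the paper gives no argument at all: it quotes the result from \cite[Proposition 3]{AlexandruZaharescu1988}. Your argument therefore serves as a self-contained replacement for that citation.

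For necessity, you test both valuations on $X-a_1$ and $X-a_2$. For sufficiency, the single computation $v_{a_2,\g}(X-a_1)=\min\{\g,\d\}=\g$, combined with the ultrametric inequality and the symmetry of the hypothesis, finishes the job without any binomial re-expansion.

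Two small remarks. First, the case split in the forward direction is unnecessary. The inequalities $\g_1=\min\{\g_2,\d\}\le\g_2$ and $\g_2=\min\{\g_1,\d\}\le\g_1$ already give $\g_1=\g_2$.

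Second, you are right to read the equality as one of valuations on $\overline{K}(X)$, which is how the paper uses it. For the restrictions to $K(X)$ the forward implication fails. Over a henselian $K$, take $a$ with a conjugate $\s a\neq a$ and choose $\g>v(a-\s a)$. Then $(a,\g)$ and $(\s a,\g)$ induce the same valuation on $K(X)$, even though $v(a-\s a)<\g$.
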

In light of the above proposition, we define a pair of definition \( (a,\g) \) to be a \textbf{minimal pair of definition for \( (K(X)|K,w) \)} if \(a\) has minimal degree over \(K\) among all pairs of definition, i.e., 
\[ v(a-b)\geq\g \Longrightarrow [K(a):K] \leq [K(b):K].  \]

\pars We now fix a \textit{minimal} pair of definition \( (a,\g) \) for \( (K(X)|K,w) \). Let \(Q\) be the minimal polynomial of \(a\) over \(K\) and take \( f\in K[X] \). Then we have a unique expansion, called the \(Q\)-expansion
\begin{equation}\label{eqn Q-expansion}
	f = \sum_{i=0}^{r} f_i Q^i,
\end{equation}
where \( f_i \in K[X]\) with \( \deg f_i < \deg Q \). The following theorem appears in \cite[Theorem 2.1]{AlexandruPopescuZaharescu1988}:

\begin{Theorem}\label{Thm w = v_Q}
	\( wf = \min\{ wf_i + iwQ \} = \min\{ vf_i(a) + iwQ \}.  \)
\end{Theorem}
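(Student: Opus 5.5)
The plan is to prove the second equality first, namely \(wg = vg(a)\) for every \(g\in K[X]\) with \(\deg g<\deg Q\). Then I would show that the terms \(f_iQ^i\) of the \(Q\)-expansion \eqref{eqn Q-expansion} cannot cancel, by comparing their initial forms in the graded algebra of \(w=v_{a,\g}\).

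\emph{Step 1: \(wg=vg(a)\) when \(\deg g<\deg Q\).} Factor \(g=c\prod_j (X-b_j)\) with \(c\in K\) and \(b_j\in\overline{K}\). Since \(w=v_{a,\g}\), we have \(w(X-b_j)=\min\{v(a-b_j),\g\}\). Each \(b_j\) satisfies \([K(b_j):K]\le \deg g<\deg Q=[K(a):K]\). If \(v(a-b_j)\ge\g\), then by Proposition \ref{Prop pair of defn} the pair \((b_j,\g)\) would also be a pair of definition for \(w\), contradicting the minimality of \((a,\g)\). Hence \(v(a-b_j)<\g\) for all \(j\), so \(w(X-b_j)=v(a-b_j)\). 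Summing over \(j\) gives \(wg=vc+\sum_j v(a-b_j)=vg(a)\). Applied to each \(f_i\), this yields \(wf_i=vf_i(a)\), which is the second equality.

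Moreover, \(X-b_j=(a-b_j)+(X-a)\) with \(w(X-a)=\g>v(a-b_j)\). So \(f_i=f_i(a)(1+\epsilon_i)\) with \(w\epsilon_i>0\), whenever \(f_i\neq 0\).

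\emph{Step 2: no cancellation.} The inequality \(wf\ge\min_i w(f_iQ^i)\) is the triangle inequality, so only equality needs proof. I would work in the graded algebra \(\gr_w(\overline{K}[X])\). Since \(w\) is determined by \((a,\g)\), this algebra is generated over the graded algebra of \((\overline{K},v)\) by \(t:=\init(X-a)\). It is a polynomial ring in \(t\) when \(\g\) is torsion modulo \(v\overline{K}\) after rescaling, and a Laurent/group-ring type extension otherwise; in either case it is an integral domain in which \(t\)-degree makes sense.

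Let \(a=a_1,\dots,a_n\) be the roots of \(Q\). For a root with \(v(a-a_k)<\g\) we have \(\init(X-a_k)=\init(a-a_k)\), a nonzero element of \(t\)-degree \(0\). For a root with \(v(a-a_k)>\g\) we have \(\init(X-a_k)=t\). For a root with \(v(a-a_k)=\g\) we have \(\init(X-a_k)=t+\init(a-a_k)\), which has \(t\)-degree exactly \(1\). Let \(s\ge1\) be the number of roots with \(v(a-a_k)\ge\g\); this counts \(a\) itself. Then \(\init(Q)\) has \(t\)-degree exactly \(s\), and by Step 1 \(\init(f_i)=\init(f_i(a))\) has \(t\)-degree \(0\). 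Hence \(\init(f_iQ^i)\) has \(t\)-degree exactly \(si\).

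Suppose the terms attaining the minimal value \(m\) have indices \(i_1<\dots<i_k\). Then the degree-\(m\) component of \(\init(f)\) is \(\sum_l \init(f_{i_l}Q^{i_l})\). This is a sum of nonzero elements of pairwise distinct \(t\)-degrees, so it is nonzero. Therefore \(wf=m\).

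The main obstacle is justifying the structure of \(\gr_w\) over \(\overline{K}\), in particular that \(t\)-degree is well defined and multiplicative. I would handle this directly from \(w=v_{a,\g}\). The initial form of \(\sum c_i(X-a)^i\) is \(\sum_{i:\,vc_i+i\g=m}\init(c_i)t^i\), and this identification is compatible with products because \(w\) is defined as a minimum over monomials, which makes \(\gr_w\) an integral domain. An alternative that avoids graded algebras is to expand each \(f_i(a)(1+\epsilon_i)Q^i\) in powers of \(X-a\) and track the largest exponent attaining the value. That is the same argument with more bookkeeping.
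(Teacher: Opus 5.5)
Your proof is correct. The paper gives no argument for this statement; it simply quotes \cite[Theorem 2.1]{AlexandruPopescuZaharescu1988}, so your proof is a self-contained replacement rather than a variant of one in the text.

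Your Step 1 is the standard lemma behind that result: minimality of \((a,\g)\) forces \(v(a-b)<\g\) for every root \(b\) of a polynomial of degree \(<\deg Q\), hence \(\init(f_i)=\init(f_i(a))\). The cited source works with residually transcendental \(w\). Given Step 1, the value-transcendental case is actually immediate, because \(wQ\notin v\overline{K}\) makes the values \(vf_i(a)+i\,wQ\) pairwise distinct.

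Where the standard treatments argue through residue fields, you turn non-cancellation into a degree count in \(\gr_w(\overline{K}[X])\cong\gr_v(\overline{K})[T]\), with \(T\mapsto\init(X-a)\). This buys something concrete. Since \(\init(X-b)\) has \(T\)-degree \(1\) or \(0\) according as \(v(a-b)\ge\g\) or not, the \(T\)-degree of \(\init(g)\) equals \(j_w(g)\) for every \(g\). Your computation therefore also gives \(j_w(f)=j_w(Q)\cdot\max\{i\mid wf=wf_i+iwQ\}\) at no extra cost. That is Theorem~\ref{Thm j(f)/j(Q)}, which the paper likewise only cites. What the citation supplies and your argument does not is the accompanying description of the residue field and value group of \(w\), which this statement does not need.

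Two points of presentation.

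First, the polynomial/Laurent dichotomy you describe is unnecessary: \(\gr_w(\overline{K}[X])\cong\gr_v(\overline{K})[T]\) in both cases. The minimum formula for \(v_{a,\g}\) is what gives injectivity of \(\gr_v(\overline{K})[T]\to\gr_w(\overline{K}[X])\): a homogeneous \(\sum\init(c_i)T^i\) maps to the class of \(\sum c_i(X-a)^i\), whose value is exactly the degree. The integral-domain property and the additivity of \(T\)-degree come from \(\gr_v(\overline{K})\) being a domain, not from the minimum formula.

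Second, a priori you only know \(wf\ge m\). So what you call the degree-\(m\) component of \(\init(f)\) should be the class of \(f\) in \(\{g\mid wg\ge m\}/\{g\mid wg>m\}\). Showing that this class is nonzero is exactly showing \(wf=m\).
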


For any polynomial \( f\in K[X] \), we define
\[ j_w(f) := \#\{ z\in \Z(f) \mid v(a-z)\geq\g \}.  \]
This operator \( j_w(-) \) is said to be the \textbf{\(j\)-invariant corresponding to \(w\)}. The invariance of \(j_w(-)\) was established in \cite[Theorem 3.1]{Dutta2024Invariant}. In particular, it is independent of both the chosen extension of \(w\) to \(\overline{K}(X)\) and the chosen minimal pair of definition. 

\pars The \(j\)-invariant encodes valuable ramification theoretic information, as well as providing important descriptions of the valuation \(w\). In this paper, we are primarily interested in the latter properties. The following observation is a consequence of \cite[Propositions 3.4 and 2.14]{Dutta2024Invariant} and will play a key role in the sequel: 

\begin{Theorem}\label{Thm j(f)/j(Q)}
	Take a polynomial \( f \in K[X] \). Then,
	\[ j_w(Q) \text{ divides } j_w(f).  \]
	More precisely, considering the expansion (\ref{eqn Q-expansion}), we have
	\[  \dfrac{j_w(f)}{j_w(Q)} = \max \left\{ i \mid wf = wf_i + iwQ \right\}.  \]
\end{Theorem}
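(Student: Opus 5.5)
The plan is to replace \(j_w\) by a quantity read off from Taylor expansions at \(a\), which is additive and interacts well with sums. For \(g\in\overline{K}[X]\), write \(g=\sum_k d_k(X-a)^k\) and set
\[ J(g):=\max\{k \mid wg = vd_k + k\g\}, \]
which is meaningful because \(w=v_{a,\g}\). \textbf{Step 1} shows \(J(g)=\#\{z\in\Z(g)\mid v(a-z)\ge\g\}\), so in particular \(J=j_w\) on \(K[X]\). This is a Newton polygon statement. Write \(g(X+a)=c\prod_z (X-(z-a))\). The roots with \(v(z-a)\ge\g\) are exactly those lying in the closed ball of radius \(\g\). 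The largest index at which the minimum \(\min_k\{vd_k+k\g\}\) is attained is the number of roots whose valuation is at least the slope \(\g\). Equivalently, one computes \(w(X-z)=\min\{\g,v(a-z)\}\) and expands the product: the term \((X-a)^{m}\), with \(m=\#\{z \mid v(a-z)\ge \g\}\), times the product of the \((a-z)\) for the remaining roots is the highest-degree term achieving \(wg\).

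\textbf{Step 2} is to pass to the graded ring of \(w\) on \(\overline{K}[X]\). Since \(w\) is the monomial valuation in \(X-a\), this graded ring is a polynomial ring \(\gr_v(\overline{K})[t]\), where \(t=\init(X-a)\) is homogeneous of degree \(\g\). The initial form of \(g\) is \(\sum_{k:\,wg=vd_k+k\g}\init(d_k)t^k\), and \(J(g)\) is its \(t\)-degree. Because \(\gr_v(\overline{K})\) is a domain, we get \(J(gh)=J(g)+J(h)\). This gives multiplicativity, and hence \(J(Q^i)=iJ(Q)=i\,j_w(Q)\). Moreover \(j_w(Q)\ge1\), since \(a\) itself is a root of \(Q\).

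\textbf{Step 3} is to show \(J(f_i)=0\) for every \(f_i\) with \(\deg f_i<\deg Q\). By Step 1 this amounts to showing that no root \(z\) of \(f_i\) satisfies \(v(a-z)\ge\g\). Suppose one did. Then by Proposition \ref{Prop pair of defn}, \((z,\g)\) would be a pair of definition for \(w\). But \([K(z):K]\le\deg f_i<\deg Q=[K(a):K]\), which contradicts the minimality of \((a,\g)\).

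\textbf{Step 4} combines these. Let \(\delta=wf\), which by Theorem \ref{Thm w = v_Q} equals \(\min_i\{wf_i+iwQ\}\), and let \(S=\{i \mid wf_i+iwQ=\delta\}\). Then \(\init(f)\) is the sum of the initial forms \(\init(f_i)\init(Q)^i\) over \(i\in S\), provided this sum is nonzero. By Steps 2 and 3, the \(t\)-degree of \(\init(f_i)\init(Q)^i\) is exactly \(i\,j_w(Q)\). These degrees are pairwise distinct, since \(j_w(Q)\ge1\). So the sum cannot cancel, and its \(t\)-degree is \(r\,j_w(Q)\) with \(r=\max S\). Hence \(j_w(f)=J(f)=r\,j_w(Q)\). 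This gives both the divisibility and the formula for the quotient.

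I expect the main obstacle to be the careful justification of Step 1 together with the graded-ring description in Step 2. In particular, one has to check that taking the maximal index in the minimum really corresponds to taking the top \(t\)-degree of a nonzero homogeneous element, and that this element is well defined independently of the extension of \(w\) to \(\overline{K}(X)\). Once that is in place, the remaining steps are formal.
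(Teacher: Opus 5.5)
Your proof is correct, and it takes a genuinely different route from the paper. The paper gives no argument of its own here: it simply deduces the statement from Propositions 3.4 and 2.14 of \cite{Dutta2024Invariant}.

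Your proof is self-contained and works entirely inside the graded ring of the monomial valuation on \(\overline{K}[X]\). That ring is \(\mathrm{gr}_v(\overline{K})[t]\) with \(t=\mathrm{in}(X-a)\); injectivity of the natural map is exactly the defining formula \(w=v_{a,\gamma}\). The argument then runs as follows.
\begin{itemize}
\item You identify \(j_w\) with the \(t\)-degree of the initial form, because \(\mathrm{in}(X-z)\) has \(t\)-degree \(1\) or \(0\) according as \(v(a-z)\ge\gamma\) or not.
\item Minimality of \((a,\gamma)\) forces \(\mathrm{in}(f_i)\) to have \(t\)-degree \(0\).
\item Hence the summands \(\mathrm{in}(f_i)\,\mathrm{in}(Q)^i\) with \(i\in S\) have pairwise distinct \(t\)-degrees \(i\,j_w(Q)\), so they cannot cancel.
\end{itemize}

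Your Step 1 is precisely Corollary \ref{Coro j(f) when deg(Q)=1}, proved over \(\overline{K}\), where \(a\) is rational. The paper obtains that corollary as a special case of the present theorem. You prove it first and bootstrap to arbitrary \(Q\) via multiplicativity, so the logical order is reversed.

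Your approach also gives some extras for free.
\begin{itemize}
\item Running the non-cancellation argument with \(S\) taken as the set of indices minimizing \(wf_i+iwQ\) reproves Theorem \ref{Thm w = v_Q}, so you need not invoke it.
\item It yields Proposition \ref{Prop in(f)=in(g) implies j(f)=j(g)} immediately, since \(w(f-g)>wf=wg\) says \(\mathrm{in}(f)=\mathrm{in}(g)\).
\item It shows \(j_w\) is unchanged if \(a\) is replaced by any \(b\) with \(v(a-b)\ge\gamma\), since \(\mathrm{in}(X-b)\) is \(t\) or \(t+\mathrm{in}(a-b)\), both of \(t\)-degree \(1\).
\end{itemize}

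The obstacle you flag at the end is not one. The theorem concerns the fixed extension of \(w\) to \(\overline{K}(X)\) and the fixed minimal pair \((a,\gamma)\), and your argument works verbatim for any such choice. Independence of these choices is the separate invariance result the paper cites, and it is not needed here.

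What the cited route buys the paper is a direct link to the key-polynomial framework and to that invariance theorem. What yours buys is a short, elementary proof that makes the divisibility \(j_w(Q)\mid j_w(f)\) transparent.
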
 

As a consequence, in the case when \(a\in K\), we obtain the following:

\begin{Corollary}\label{Coro j(f) when deg(Q)=1}
	Assume that \(a\in K\). Take \( f\in K[X] \) and consider the expansion 
	\[ f = \sum_{i=0}^{n} c_i (X-a)^i.  \]
	Then, 
	\[ wf = \min\{ vc_i + i\g \}.  \]
	Moreover, 
	\[ j_w(f) = \max\left\{ i \mid wf = vc_i + i\g \right\}. \]
\end{Corollary}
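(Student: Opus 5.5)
The corollary should follow from Theorem \ref{Thm w = v_Q} and Theorem \ref{Thm j(f)/j(Q)} once we use that \(a\in K\). The plan is to first identify \(Q\) and the \(Q\)-expansion, and then read off both assertions.

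\emph{Identifying \(Q\) and the expansion.} Since \(a\in K\), the minimal polynomial of \(a\) over \(K\) is \(Q = X-a\), of degree one. The pair \((a,\g)\) is automatically a minimal pair of definition, because no element \(b\in\overline{K}\) has \([K(b):K]<1\). In the \(Q\)-expansion (\ref{eqn Q-expansion}) every coefficient \(f_i\) has degree less than \(\deg Q = 1\), so it is a constant. By uniqueness of the \(Q\)-expansion, it is exactly the Taylor expansion \(f=\sum c_i(X-a)^i\), so that \(f_i = c_i\) and \(r=n\).

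\emph{The value formula.} Here \(wQ = w(X-a) = \g\), since \(w=v_{a,\g}\). Also \(f_i(a) = c_i\) because \(f_i\) is constant. Theorem \ref{Thm w = v_Q} then gives \(wf=\min\{vc_i+i\g\}\). Alternatively, this is just the defining property of the pair of definition \((a,\g)\).

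\emph{The \(j\)-invariant formula.} By definition, \(j_w(Q)\) counts the roots \(z\) of \(X-a\) with \(v(a-z)\ge\g\). The only root is \(z=a\), and \(v(0)=\infty\ge\g\), so \(j_w(Q)=1\). Theorem \ref{Thm j(f)/j(Q)} then reads \(j_w(f)=\max\{i\mid wf = vc_i+i\g\}\).

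No step is a genuine obstacle. The only points requiring care are checking that \((a,\g)\) qualifies as a minimal pair, so that the cited theorems apply, and that \(j_w(Q)=1\). Both are immediate.
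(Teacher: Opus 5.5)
Your proposal is correct and follows essentially the same route as the paper, which deduces the corollary from Theorem \ref{Thm w = v_Q} and Theorem \ref{Thm j(f)/j(Q)} by specializing to \(Q = X-a\). In that case the \(Q\)-expansion is the Taylor expansion at \(a\), \(wQ=\g\) and \(j_w(Q)=1\). Your check that \((a,\g)\) is automatically a minimal pair of definition is exactly the verification needed for those two theorems to apply.
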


This next property plays a significant role in building a connection between the \(j\)-invariant and the relative approximation degree. 

\begin{Proposition}\cite[Corollary 3.6]{Dutta2024Invariant}\label{Prop in(f)=in(g) implies j(f)=j(g)}
	Assume that \( w(f-g) > wf = wg \). Then \( j_w(f) = j_w(g) \).
\end{Proposition}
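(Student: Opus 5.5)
The statement to prove is Proposition~\ref{Prop in(f)=in(g) implies j(f)=j(g)}: if \(w(f-g) > wf = wg\), then \(j_w(f) = j_w(g)\). The plan is to compare the \(Q\)-expansions of \(f\), \(g\) and \(h := f-g\), and then read off \(j_w\) from Theorem \ref{Thm j(f)/j(Q)}.

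Write
\[ f = \sum_i f_i Q^i, \qquad g = \sum_i g_i Q^i. \]
By uniqueness of the \(Q\)-expansion, \(h\) has coefficients \(h_i = f_i - g_i\). Put \(\d := wf = wg\).

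By Theorem \ref{Thm w = v_Q}, \(wh = \min_i\{ wh_i + iwQ\}\). Since \(wh > \d\), every index \(i\) satisfies
\[ wh_i + iwQ > \d. \]

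Now let \(i\) be an index with \(wf_i + iwQ = \d\). Then
\[ wg_i + iwQ = w\bigl((f_i - h_i)Q^i\bigr). \]
Of the two terms \(f_iQ^i\) and \(h_iQ^i\), the first has value \(\d\) and the second has value \(>\d\). So the ultrametric inequality gives \(wg_i + iwQ = \d\).

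By symmetry, obtained by swapping \(f\) and \(g\) and replacing \(h\) by \(-h\), the converse holds as well. Hence
\[ \{ i \mid wf = wf_i + iwQ\} = \{ i \mid wg = wg_i + iwQ\}. \]
In particular the two sets have the same maximum. Theorem \ref{Thm j(f)/j(Q)} then gives
\[ \frac{j_w(f)}{j_w(Q)} = \frac{j_w(g)}{j_w(Q)}. \]

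The last step is to clear the denominator, which requires \(j_w(Q) \ge 1\). This holds because \(a\) is a root of \(Q\) and \(v(a-a) = \infty \geq \g\). Therefore \(j_w(f) = j_w(g)\).

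The only subtle point is that the equality \(wf_i = vf_i(a)\) and the orthogonality of the \(Q\)-expansion must be used exactly as stated in Theorem \ref{Thm w = v_Q}. No extra control of the values of the individual coefficients is needed beyond what that theorem provides, so I do not expect a real obstacle here.
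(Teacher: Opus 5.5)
Your proof is correct. The ultrametric comparison shows that the dominant-index sets \(\{i \mid wf = wf_i + iwQ\}\) and \(\{i \mid wg = wg_i + iwQ\}\) coincide, and Theorem \ref{Thm j(f)/j(Q)} turns equality of their maxima into \(j_w(f) = j_w(g)\). The paper gives no argument of its own here and imports the statement from the same source as Theorem \ref{Thm j(f)/j(Q)}, so deriving it from that theorem, as you do, is the natural route.
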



\subsection{Monomial valuations} We now return to our original setup. For each $\nu<\l$, we define the monomial valuation
\[ w_{\nu}:= v_{z_\nu, \g_\nu}.  \]
Now $z_\nu$ being in $K$ implies that $(z_\nu,\g_\nu)$ is a minimal pair of definition for $(K(X)|K,w_\nu)$. Moreover, we have $v(z_\mu-z_\nu) = \g_\nu$ for all $\mu>\nu$. It follows that
\begin{equation}\label{eqn mpd for w_nu}
	(z_\mu,\g_\nu) \text{ is a minimal pair of definition for } (K(X)|K,w_\nu) \text{ for all } \nu\leq\mu<\l.
\end{equation}
If $\mathcal{C}$ is of algebraic type with an associated minimal polynomial $Q(X)$, set
\[ \g:= \sup\{\g_\nu\}_{\nu<\l},  \]
where the supremum $\g$ is taken in the Dedekind-MacNeille completion of $vK$. We refer the reader to \cite[Section 5]{Schroder2003OrderedSets} for an introduction to the theory of Dedekind-MacNeille completions. Since $X\in\Lim\mathcal{C}$ and $\mathcal{C}$ is of algebraic type, $\mathcal{C}$ is not a Cauchy sequence, and hence $\g\neq\infty$. We take $a\in \Z(Q)\sect\Lim\mathcal{C}$ and define 
\[ w^*:= v_{a,\g}.  \]
That such an $a$ exists is guaranteed by Lemma \ref{Lemma vf(z) increasing iff Z(f) has limits}. Since $a\in\Lim\C$ and $\g>\g_\nu$ for all $\nu<\l$, we have that for any $b\in\overline{K}$,
\begin{equation}\label{eqn limit iff > g_nu}
	v(a-b)\geq\g\Longleftrightarrow b\in\Lim\mathcal{C}.
\end{equation}
In other words, 
\begin{equation}\label{eqn (b,g) pair of defn iff b limit}
	(b,\g) \text{ is a pair of definition for } (K(X)|K,w^*) \text{ if and only if } b\in\Lim\C.
\end{equation}
From the minimality of $Q$, we then conclude that
\begin{equation}\label{eqn mpd for w*}
	(a,\g) \text{ is a minimal pair of definition for } (K(X)|K,w^*).
\end{equation}

By Proposition \ref{Prop pair of defn}, \( (a,\g_\nu) \) is also a pair of definition for \( w_\nu \) for each \( \nu<\l \). Thus \( w^* \) can be regarded as the limit of the valuations \(\{ w_\nu \}_{\nu<\l}\).

\begin{Lemma}\label{Lemma w_nu f = vf(z_nu)}
	Take \( f \in K[X] \). Then there exists \( \nu_0 < \l \) such that \( w_\nu f = v f(z_\nu) \) for all \( \nu > \nu_0 \).
\end{Lemma}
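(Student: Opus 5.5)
The plan is to factor $f$ over $\overline{K}$ and compare the two sides root by root. This avoids any analysis of the Taylor coefficients of $f$ along $\mathcal{C}$.

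Write $f = c\prod_{j=1}^{n}(X-b_j)$ with $c\in K^\times$ and $b_j\in\overline{K}$; the multiset $\{b_j\}$ is $\Z(f)$. We regard $w_\nu=v_{z_\nu,\g_\nu}$ as a valuation on $\overline{K}(X)$, given on $\overline{K}[X]$ by the minimum formula for expansions in powers of $X-z_\nu$. It is multiplicative, so
\[ w_\nu f = vc + \sum_{j=1}^{n} w_\nu(X-b_j). \]
Since $X-b_j = (X-z_\nu) + (z_\nu-b_j)$ is already its expansion around $z_\nu$, we get
\[ w_\nu(X-b_j)=\min\{v(z_\nu-b_j),\,\g_\nu\}. \]
On the other side, evaluation gives
\[ vf(z_\nu) = vc+\sum_j v(z_\nu-b_j). \]
It therefore suffices to find $\nu_0<\l$ such that, for every $\nu>\nu_0$ and every root $b_j$, we have $v(z_\nu-b_j)\le\g_\nu$.

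We treat each root separately, which is possible because there are only finitely many.

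If $b_j\in\Lim\mathcal{C}$, then by definition $v(b_j-z_\nu)=\g_\nu$ for all $\nu<\l$, and the inequality holds with equality.

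If $b_j\notin\Lim\mathcal{C}$, then Lemma~\ref{Lemma Y not limit eqv condns}(iii) provides an ordinal $\nu_j<\l$ such that $v(b_j-z_\nu)<\g_\nu$ for all $\nu>\nu_j$. This lemma applies because $b_j\in\overline{K}\subseteq\overline{K(X)}$.

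Taking $\nu_0$ to be the maximum of the finitely many $\nu_j$ then yields $w_\nu(X-b_j)=v(z_\nu-b_j)$ for all $j$ and all $\nu>\nu_0$. Summing over $j$ gives $w_\nu f=vf(z_\nu)$.

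The only point needing care is the first step: that $w_\nu$ really is a valuation on $\overline{K}(X)$ (so the product formula holds), extending $w_\nu$ on $K(X)$. This is the standard fact that $v_{a,\g}$ is defined on $\overline{K}(X)$ for $a\in\overline{K}$, as used in the definition of pairs of definition. Everything after that is a direct application of Lemma~\ref{Lemma Y not limit eqv condns}.
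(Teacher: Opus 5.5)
Your proof is correct and is essentially the paper's argument: factor $f$ over $\overline{K}$, compute $w_\nu(X-b_j)=\min\{v(z_\nu-b_j),\g_\nu\}$, and use Lemma~\ref{Lemma Y not limit eqv condns} to get $v(z_\nu-b_j)<\g_\nu$ eventually for each of the finitely many roots that are not limits of $\C$. The only cosmetic difference is that the paper phrases the non-limit case as $\g_\nu > w(X-b_j)$ and deduces $w_\nu(X-b_j)=v(z_\nu-b_j)$ from that, whereas you read off $v(z_\nu-b_j)<\g_\nu$ directly from part (iii).
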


\begin{proof}
	Writing \( f = \prod_{i=1}^{n} (X-z_i) \), we choose \( \nu_0 < \l \) such that 
	\[ \g_{\nu_0} > \max \left\{ w(X-z_i) \mid z_i \notin \Lim \C  \right\},  \]
	where the maximum over the empty set is understood to be \( -\infty \). Such a choice is possible by Lemma \ref{Lemma Y not limit eqv condns}. Fix some \( \nu > \nu_0 \). Observe that 
	\[ w_\nu (X-z_i) = \g_\nu = v(z_\nu - z_i) \text{ whenever } z_i \in \Lim \C.  \]
	Otherwise, \( \g_\nu > w(X-z_i) \) and hence 
	\[ w_\nu (X-z_i) = v (z_\nu - z_i).   \]
	The lemma now follows.
\end{proof}


\section{The case $\Z(f)\sect\Lim\mathcal{C}\neq\emptyset$}\label{Section Z(f) has limits}
Throughout Sections \ref{Section Z(f) has limits}--\ref{Section A-S reduction}, we assume that $\C$ is of algebraic type. We fix an associated minimal polynomial $Q$ and set
\[   d:= \deg Q = \deg\C.  \]

\subsection{Relative approximation degree and the \(j\)-invariant} We first relate the relative approximation degree to the $j$-invariant corresponding to $w^*$. 

\begin{Theorem}\label{Thm h(f) = j_w*(f) when f has limits}
	Let $f \in K[X]$ such that $\Z(f)\sect\Lim\C\neq\emptyset$. Then
	\begin{equation}\label{eqn h(f) = j_w*(f)}
		h_K(X:f) = \# \Z(f)\sect\Lim\C = j_{w^*}(f).
	\end{equation}
Moreover, 
\begin{equation}\label{eqn h(Q)| h(f)}
	h_K(X:Q) \text{ divides } h_K(X:f).
\end{equation}
\end{Theorem}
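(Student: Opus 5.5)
Factor \(f\) over \(\overline{K}\) and compute directly. Write \(f = c\prod_{i=1}^{n}(X-x_i)\) with \(x_i\in\overline{K}\). Then
\[ w(f(X)-f(z_\nu)) \quad\text{and}\quad w f(X) - v f(z_\nu) \]
need to be compared. Since \(\Z(f)\sect\Lim\C\neq\emptyset\), Lemma \ref{Lemma vf(z) increasing iff Z(f) has limits} shows that \(\{vf(z_\nu)\}\) is ultimately strictly increasing. I will show that \(wf(X) > vf(z_\nu)\) eventually. Then \(w(f(X)-f(z_\nu)) = vf(z_\nu)\), and it remains to compute \(vf(z_\nu)\) root by root.

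Fix \(\nu_0\) as in the proof of Lemma \ref{Lemma w_nu f = vf(z_nu)}, so that for \(\nu>\nu_0\) every root \(x_i\notin\Lim\C\) satisfies \(v(z_\nu-x_i)=w(X-x_i)\); this quantity is constant in \(\nu\). For \(x_i\in\Lim\C\) we have \(v(z_\nu - x_i) = \g_\nu\). Hence, for \(\nu>\nu_0\),
\[ vf(z_\nu) = vc + \sum_{x_i\notin\Lim\C} w(X-x_i) + m\g_\nu, \qquad m:=\#\Z(f)\sect\Lim\C \geq 1. \]
Likewise \(wf(X) = vc+\sum_{x_i\notin\Lim\C} w(X-x_i) + \sum_{x_i\in\Lim\C} w(X-x_i)\). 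Each \(w(X-x_i)\) with \(x_i\in\Lim\C\) is \(\geq \g_\mu\) for all \(\mu\), hence \(>\g_\nu\), so \(wf(X)>vf(z_\nu)\). Therefore
\[ w(f(X)-f(z_\nu)) = \b + m\,w(X-z_\nu), \qquad \b := vc+\sum_{x_i\notin\Lim\C} w(X-x_i), \]
using \(w(X-z_\nu)=\g_\nu\). This gives \(h_K(X:f)=m\) with constant \(\b\in vK\). That \(\b\in vK\) follows because \(\b = vf(z_\nu) - m\g_\nu\), and both terms lie in \(vK\).

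The identification \(m = j_{w^*}(f)\) is immediate from the definition of the \(j\)-invariant with the minimal pair \((a,\g)\) of (\ref{eqn mpd for w*}). Indeed,
\[ j_{w^*}(f)=\#\{z\in\Z(f)\mid v(a-z)\geq\g\}, \]
and by (\ref{eqn limit iff > g_nu}) the condition \(v(a-z)\ge\g\) is equivalent to \(z\in\Lim\C\).

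For the divisibility (\ref{eqn h(Q)| h(f)}), note first that \(\Z(Q)\sect\Lim\C\ni a\) is nonempty. Hence the first part applies to \(Q\) and gives \(h_K(X:Q)=j_{w^*}(Q)\). Theorem \ref{Thm j(f)/j(Q)}, applied to \(w^*\) with the minimal pair \((a,\g)\) and associated polynomial \(Q\), then yields \(j_{w^*}(Q)\mid j_{w^*}(f)\), so \(h_K(X:Q)\) divides \(h_K(X:f)\).

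The main obstacle is the strict inequality \(wf(X)>vf(z_\nu)\). This is exactly where a root in \(\Lim\C\) is needed. The point is that \(w(X-x_i)\geq\g>\g_\nu\) for every limit root, since \(X\) and \(x_i\) are both limits and \(\{\g_\nu\}\) has no maximum. One must also check that \(w\) on \(\overline{K}(X)\) is the fixed extension used throughout; it is.
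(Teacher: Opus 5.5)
Your proof is correct. It shares the paper's skeleton: factor \(f\) over \(\overline{K}\), compute \(vf(z_\nu)\) root by root for large \(\nu\), identify \(\#\Z(f)\sect\Lim\C\) with \(j_{w^*}(f)\) via (\ref{eqn limit iff > g_nu}), and obtain the divisibility from Theorem \ref{Thm j(f)/j(Q)} applied to the minimal pair \((a,\g)\). The two arguments differ at the first step.

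The paper imports two facts from \cite[Corollary 7.1]{KuhlmannVlahu2014}: that \(h_K(X:f)\) and \(\b_K(X:f)\) exist, and the identity \(vf(z_\nu)=w(f(X)-f(z_\nu))=\b_K(X:f)+h_K(X:f)\g_\nu\). It then extracts \(h_K(X:f)=j\) by comparing this abstract expression with the explicit root computation and using the strict monotonicity of \(\{\g_\nu\}\).

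You prove that identity directly. Each limit root contributes \(w(X-x_i)\geq\g_{\nu+1}>\g_\nu\) to \(wf(X)\) but exactly \(\g_\nu\) to \(vf(z_\nu)\). So as soon as at least one limit root exists, \(wf(X)>vf(z_\nu)\), and hence \(w(f(X)-f(z_\nu))=vf(z_\nu)\).

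This makes your proof self-contained: it establishes the existence of the invariants rather than citing it. It also gives the explicit constant \(\b_K(X:f)=vc+\sum_{x_i\notin\Lim\C}w(X-x_i)\), and you correctly check that this lies in \(vK\) via \(\b=vf(z_\nu)-m\g_\nu\). The paper's version is shorter only because it outsources exactly this step.

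The one point you leave implicit is uniqueness of \(h\). It is the same triviality the paper's comparison relies on: \((h-h')\g_\nu\) cannot be ultimately constant unless \(h=h'\).
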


\begin{proof}
	Write 
	\[ f(X) = c(X-a_1)\dotsc (X-a_n),   \]
	where $c\in K$ and assume that 
	\[ \Z(f)\sect\Lim\C = \{a_1, \dotsc , a_j\}. \]
	Since $\Z(f)\sect\Lim\C\neq\emptyset$, employing \cite[Corollary 7.1]{KuhlmannVlahu2014}, we can choose an ordinal \( \nu_0 \) such that
	\[ vf(z_\nu) = w(f(X)-f(z_\nu)) = \b_K(X:f) + h_K(X:f)\g_\nu \text{ for all } \nu>\nu_0.  \]
	By Lemma \ref{Lemma Y not limit eqv condns}, increasing $\nu_0$ if necessary we can further assume that the sequence 
	\[  \{v(z_\nu-a_i)\}  \]
	is constant for all $i>j$ and $\nu>\nu_0$. Denote this constant value by $\b_i$. It follows that
	\[ vf(z_\nu) = vc + j\g_\nu + \sum_{i>j} \b_i \text{ for all } \nu>\nu_0.   \] 
	Comparing the above two expressions yields
	\begin{equation}\label{eqn h_K(X:f)-j}
		\left(h_K(X:f)-j\right)\g_\nu = vc+\sum_{i>j}\b_i - \b_K(X:f).
	\end{equation}
If $h_K(X:f)\neq j$, then the left hand side of (\ref{eqn h_K(X:f)-j}) is monotonic in $\nu$, whereas the right hand side is constant. This contradiction implies that $h_K(X:f)=j$. Hence
\[  h_K(X:f) = j = \#\Z(f)\sect\Lim\C, \]
which proves the first equality. Moreover, it follows from (\ref{eqn limit iff > g_nu}) that
\[ v(a-a_i)\geq\g \text{ if and only if } a_i\in\Lim\C.  \]
This is equivalent to $a_i\in\{a_1, \dotsc , a_j\}$, completing the proof of the second equality. In particular, we obtain that $h_K(X:Q) = j_{w^*}(Q)$. The final assertion is now a direct consequence of (\ref{eqn mpd for w*}) and Theorem \ref{Thm j(f)/j(Q)}. 
\end{proof}

Since our choice of the associated minimal polynomial $Q$ was arbitrary, we obtain the following as an immediate consequence of (\ref{eqn h(Q)| h(f)}): 

\begin{Corollary}
	Let $Q^\prime$ be another associated minimal polynomial to $\C$. Then
	\[  h_K(X:Q) = h_K(X:Q^\prime).  \]
\end{Corollary}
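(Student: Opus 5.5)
The plan is to apply the divisibility statement (\ref{eqn h(Q)| h(f)}) of Theorem \ref{Thm h(f) = j_w*(f) when f has limits} symmetrically. First we note that $Q'$ may legitimately be used in place of $Q$ in that theorem. The only input about $Q$ in the proof of (\ref{eqn h(Q)| h(f)}) was that $Q$ is an associated minimal polynomial to $\C$. From this we got a root $a\in\Z(Q)\sect\Lim\C$, by Lemma \ref{Lemma vf(z) increasing iff Z(f) has limits}, and the minimality of $(a,\g)$ recorded in (\ref{eqn mpd for w*}). Both facts hold verbatim for $Q'$. Indeed, $Q'$ is monic of degree $d=\deg\C$ and its value is not ultimately fixed, so it has a root $a'\in\Lim\C$. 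By (\ref{eqn (b,g) pair of defn iff b limit}), $(a',\g)$ is a pair of definition for $w^*$. It is minimal because any $b\in\Lim\C$ has degree at least $d$ over $K$: otherwise the minimal polynomial of $b$ would have a limit among its roots and hence, by Lemma \ref{Lemma vf(z) increasing iff Z(f) has limits}, a value that is not ultimately fixed, in degree less than $d$, contradicting the minimality of $d$.

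Next we check that the hypotheses of Theorem \ref{Thm h(f) = j_w*(f) when f has limits} hold in both directions. Since $a'\in\Z(Q')\sect\Lim\C$, this intersection is nonempty, so we may take $f=Q'$ in the theorem formulated with $Q$. This yields
\[ h_K(X:Q) \text{ divides } h_K(X:Q'). \]
Exchanging the roles of $Q$ and $Q'$ (which is allowed by the previous paragraph), and noting that $\Z(Q)\sect\Lim\C\neq\emptyset$, we obtain
\[ h_K(X:Q') \text{ divides } h_K(X:Q). \]
Both quantities are positive integers, since by (\ref{eqn h(f) = j_w*(f)}) they count a nonempty multiset. Hence they are equal.

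The only point that needs any care is the first step, namely that the choice of associated minimal polynomial in Theorem \ref{Thm h(f) = j_w*(f) when f has limits} was genuinely arbitrary, so that the theorem applies with $Q'$. The rest is a one-line antisymmetry argument. Alternatively, one could argue directly: by (\ref{eqn h(f) = j_w*(f)}), $h_K(X:Q)=j_{w^*}(Q)$ and $h_K(X:Q')=j_{w^*}(Q')$. Applying Theorem \ref{Thm j(f)/j(Q)} with either minimal pair then gives mutual divisibility of these $j$-invariants, and $j_{w^*}$ itself does not depend on the chosen minimal pair, by \cite[Theorem 3.1]{Dutta2024Invariant}.
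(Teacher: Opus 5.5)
Your proof is correct and is essentially the paper's argument. The paper simply notes that the choice of $Q$ in Theorem \ref{Thm h(f) = j_w*(f) when f has limits} was arbitrary and applies the divisibility statement in both directions. Your check that a root $a'\in\Z(Q')\cap\Lim\C$ gives a minimal pair $(a',\gamma)$ for the same valuation $w^*$ is precisely the justification the paper leaves implicit.
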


Analogous to Theorem \ref{Thm h(f) = j_w*(f) when f has limits}, we can also relate the relative approximation degree to the $j$-invariant associated to $w_\nu$ for all large enough $\nu$. 

\begin{Proposition}\label{Prop h(f) = j_w_nu(f) when f has limits}
	Let $f \in K[X]$ such that $\Z(f)\sect\Lim\C\neq\emptyset$. Then there exists $\nu_0<\l$ such that
	\begin{equation}\label{eqn h(f) = j w_nu (f)}
		h_K(X:f) = j_{w_\nu}(f) 
	\end{equation}
	for all \( \nu>\nu_0 \). Moreover, if
	\[ f(X) - f(z_\nu) = \sum_{i=1}^{n} c_i (X-z_\nu)^i  \]
	for some \( \nu>\nu_0 \), then
	\begin{equation}\label{eqn beta(f) = vc_h}
		\b_K(X:f) = vc_h \text{ where } h=h_K(X:f).
	\end{equation}
\end{Proposition}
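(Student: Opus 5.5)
Write $f(X)=c(X-a_1)\dotsm(X-a_n)$ with $\Z(f)\sect\Lim\C=\{a_1,\dotsc,a_j\}$. By Theorem \ref{Thm h(f) = j_w*(f) when f has limits} we already know $h_K(X:f)=j$. So the first claim reduces to showing $j_{w_\nu}(f)=j$ for all large $\nu$. The plan is to compute $j_{w_\nu}(f)$ directly from its definition, using the minimal pair $(z_\mu,\g_\nu)$ from (\ref{eqn mpd for w_nu}); concretely I take the pair $(z_\nu,\g_\nu)$. By definition,
\[ j_{w_\nu}(f)=\#\{a_i \mid v(z_\nu-a_i)\ge \g_\nu\}. \]
If $a_i\in\Lim\C$, then $v(z_\nu-a_i)=\g_\nu$, so $a_i$ is counted. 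If $a_i\notin\Lim\C$, then Lemma \ref{Lemma Y not limit eqv condns} gives $\nu_0$ with $v(a_i-z_\nu)<\g_\nu$ for all $\nu>\nu_0$. Taking $\nu_0$ large enough to work for all $i>j$ at once, such roots are never counted. Hence $j_{w_\nu}(f)=j=h_K(X:f)$ for all $\nu>\nu_0$.

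For (\ref{eqn beta(f) = vc_h}), I pass from $f$ to $g_\nu:=f(X)-f(z_\nu)$, whose Taylor coefficients at $z_\nu$ are exactly the $c_i$. By Corollary \ref{Coro j(f) when deg(Q)=1}, applied with $a=z_\nu\in K$ and $\g=\g_\nu$,
\[ w_\nu g_\nu=\min_i\{vc_i+i\g_\nu\} \quad\text{and}\quad j_{w_\nu}(g_\nu)=\max\{i\mid w_\nu g_\nu=vc_i+i\g_\nu\}. \]
The core step is to show $j_{w_\nu}(g_\nu)=j_{w_\nu}(f)$. I would do this via Proposition \ref{Prop in(f)=in(g) implies j(f)=j(g)}, proving $w_\nu(f-g_\nu)=vf(z_\nu)>w_\nu f=w_\nu g_\nu$. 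Since $w_\nu f=\min\{vf_i(z_\nu)+i\g_\nu\}$, where $f_i$ are the Hasse derivatives, we get $w_\nu f\le vf(z_\nu)$. Equality is what Lemma \ref{Lemma w_nu f = vf(z_nu)} gives, so the strict inequality fails as stated, and the comparison needs a different route.

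The better route, and the expected obstacle, is to compute $j_{w_\nu}(g_\nu)$ from its roots. The roots of $g_\nu$ are the $b$ with $f(b)=f(z_\nu)$, and one of them is $z_\nu$ itself. I would instead use the characterization $w(f(X)-f(z_\nu))=\b_K(X:f)+h\g_\nu$ from Definition \ref{Defn rel appr deg}, together with the fact that $w$ and $w_\nu$ agree on $g_\nu$ for large $\nu$. The latter holds because $w(X-z_\nu)=\g_\nu$ and the coefficients $c_i$ lie in $K$; the clean argument is that $w_\nu\le w$ on $K[X]$ always (since $w(X-z_\nu)=\g_\nu$ and the triangle inequality), with equality on $g_\nu$ needing to be checked.

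The alternative I would actually push is a slope argument on the Newton polygon, fixing $\nu$ and varying $\mu>\nu$ in the expansion at $z_\nu$. Write $\d_\mu:=w(X-z_\nu)$, which equals $\g_\nu$, and note $v(z_\mu-z_\nu)=\g_\nu$ as well. By contrast, $g_\mu$ has Taylor coefficients $c_i^{(\mu)}$ depending on $\mu$. The cleanest version is the following. For large $\nu$, $vf(z_\nu)=\b+h\g_\nu$ is strictly increasing. Expanding $f(z_\mu)-f(z_\nu)=\sum c_i(z_\mu-z_\nu)^i$ gives value $\b+h\g_\nu$ for $\mu>\nu$ by Definition \ref{Defn rel appr deg}, since $w(f(X)-f(z_\nu))$ equals $v(f(z_\mu)-f(z_\nu))$ for $\mu$ large. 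Hence $\min_i\{vc_i+i\g_\nu\}\le\b+h\g_\nu$. Conversely, $w_\nu g_\nu\le wg_\nu=\b+h\g_\nu$. To pin down the index, compare two consecutive $\nu<\nu'$. The terms $vc_i+i\g_\nu$ must realize the line $\b+h\g_\nu$ for all large $\nu$, while the coefficients vary only through the relation $c_i^{(\nu')}=\sum_k\binom{k}{i}c_k(z_{\nu'}-z_\nu)^{k-i}$. From this I aim to show the maximal index attaining the minimum is exactly $h$, with $vc_h=\b$, mirroring the monotonicity argument in the proof of Theorem \ref{Thm h(f) = j_w*(f) when f has limits}: a different maximal index would force a nonconstant multiple of $\g_\nu$ to be constant. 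Carrying this last step out carefully is where I expect the difficulty.
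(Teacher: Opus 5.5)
Your proof of the first assertion is correct and is the paper's argument. You use the minimal pair $(z_\nu,\g_\nu)$: roots in $\Lim\C$ satisfy $v(z_\nu-a_i)=\g_\nu$, and the remaining roots eventually satisfy $v(z_\nu-a_i)<\g_\nu$ by Lemma \ref{Lemma Y not limit eqv condns}. Hence $j_{w_\nu}(f)=j=h_K(X:f)$ by Theorem \ref{Thm h(f) = j_w*(f) when f has limits}.

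There is a gap in the second assertion: $\b_K(X:f)=vc_h$ is never established. You detour into proving $j_{w_\nu}(g_\nu)=j_{w_\nu}(f)$ for $g_\nu=f(X)-f(z_\nu)$, which the statement does not ask for. The route via Proposition \ref{Prop in(f)=in(g) implies j(f)=j(g)} fails, as you note, and the Newton-polygon route is left open. Your two inequalities there are also not converse to each other. By Corollary \ref{Coro j(f) when deg(Q)=1}, $w_\nu g_\nu=\min_{i\ge1}\{vc_i+i\g_\nu\}$, so both say $\min_{i\ge1}\{vc_i+i\g_\nu\}\le\b+h\g_\nu$, and nothing identifies the index attaining the minimum.

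The missing step is short: apply Corollary \ref{Coro j(f) when deg(Q)=1} to $f$ itself. Its expansion at $z_\nu$ is $f(z_\nu)+\sum_{i\ge1}c_i(X-z_\nu)^i$, with the same $c_i$. Since you have already shown $j_{w_\nu}(f)=h\ge1$, the Corollary gives $w_\nu f=vc_h+h\g_\nu$. The equality $w_\nu f=vf(z_\nu)$ of Lemma \ref{Lemma w_nu f = vf(z_nu)}, which you treated only as an obstruction, then yields $vf(z_\nu)=vc_h+h\g_\nu$ after enlarging $\nu_0$.

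It remains to compare with $vf(z_\nu)=\b_K(X:f)+h\g_\nu$ for large $\nu$. This is Kuhlmann--Vlahu, Corollary 7.1, as used in the proof of Theorem \ref{Thm h(f) = j_w*(f) when f has limits}. It also follows directly: $\{vf(z_\nu)\}$ is ultimately strictly increasing, while $w(f(X)-f(z_\nu))=\b+h\g_\nu$ is not constant. This forces $wf>vf(z_\nu)$, hence $w(f(X)-f(z_\nu))=vf(z_\nu)$. The comparison gives $\b_K(X:f)=vc_h$.

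As a by-product, $w_\nu g_\nu=vc_h+h\g_\nu$ with $h$ the largest index attaining it. This is exactly the identity $j_{w_\nu}(g_\nu)=h$ you were aiming for.
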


\begin{proof}
	Write $f$ as in the proof of Theorem \ref{Thm h(f) = j_w*(f) when f has limits}. By Lemma \ref{Lemma Y not limit eqv condns}, we can choose $\nu_0<\l$ such that for all $\nu>\nu_0$ and $i>j$, we have 
	\[ v(z_\nu-a_i) < \g_\nu.  \]
	Since $\{a_1, \dotsc , a_j\} \subseteq\Lim\C$, we also have $v(z_\nu-a_i) = \g_\nu$ for all $i\leq j$. Hence, by (\ref{eqn h(f) = j_w*(f)}), 
	\[ j_{w_\nu}(f) = j = j_{w^*}(f) = h_K(X:f).  \]
	Now consider the expansion 
	\[ f(X) - f(z_\nu) = \sum_{i=1}^{n} c_i (X-z_\nu)^i  \]
	and rename $h= j = h_K(X:f)$. Applying Corollary \ref{Coro j(f) when deg(Q)=1} together with (\ref{eqn mpd for w_nu}) and (\ref{eqn h(f) = j w_nu (f)}), we obtain that
	\[ w_\nu f = vc_h + h\g_\nu.  \]
	Observe that our choice of \(\nu_0\) ensures $w_\nu f = vf(z_\nu)$ for all $\nu>\nu_0$. Thus the above equation can be rewritten as
	\[ vf(z_\nu) = vc_h + h\g_\nu.  \]
	On the other hand, \cite[Corollary 7.1]{KuhlmannVlahu2014} yields that 
	\[ vf(z_\nu) = \b_K(X:f) + h\g_\nu.  \]
	Comparing the two expressions, we arrive at (\ref{eqn beta(f) = vc_h}). 
\end{proof}

It is implicit in the proof of the above theorem that
	\[  w_\nu \left( f(X)-f(z_\nu) \right) = vc_h+h\g_\nu \leq vc_i + i\g_\nu \text{ for all } i\neq h.  \]
	Since \( j_{w_\nu}(f) = h \), Corollary \ref{Coro j(f) when deg(Q)=1} yields that the above inequality is strict for \( i>h \), and hence
	\[  j_{w_\nu} \left( f(X)-f(z_\nu) \right) = h \]
	for all \( \nu>\nu_0 \). We have thus arrived at the following auxiliary result, which is a slight weakening of Proposition \ref{Prop h(f) = j_w_nu(f) when f has limits}, tailored to our later applications: 
	
\begin{Theorem}\label{Thm j-invariant characterization}
	Let $f\in K[X]$ such that $\Z(f)\sect\Lim\C\neq\emptyset$. Then there exists $\nu_0<\l$ such that
	\begin{equation}\label{eqn h(f) = j w_nu (f - f(z))}
		h_K(X:f) = j_{w_\nu}(f(X) - f(z_\nu)) 
	\end{equation}
	for all \( \nu>\nu_0 \). Moreover, if
	\[ f(X) - f(z_\nu) = \sum_{i=1}^{n} c_i (X-z_\nu)^i  \]
	for some \( \nu>\nu_0 \), then
	\begin{equation*}
		\b_K(X:f) = vc_h \text{ where } h=h_K(X:f).
	\end{equation*}
\end{Theorem}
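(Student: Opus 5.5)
The plan is to derive Theorem \ref{Thm j-invariant characterization} from Proposition \ref{Prop h(f) = j_w_nu(f) when f has limits}, following the remark preceding the statement. Take $\nu_0$ as in that proposition, and enlarge it if needed so that Lemma \ref{Lemma w_nu f = vf(z_nu)} also holds, i.e. $w_\nu f = vf(z_\nu)$ for $\nu>\nu_0$. Fix $\nu>\nu_0$ and write the Taylor expansion
\[ f(X) = f(z_\nu) + \sum_{i=1}^{n} c_i (X-z_\nu)^i. \]
Set $h = h_K(X:f)$. The proposition gives $j_{w_\nu}(f) = h$ and $\b_K(X:f) = vc_h$. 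The second claim of the theorem is therefore immediate, and it remains to show $j_{w_\nu}(f(X)-f(z_\nu)) = h$.

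By (\ref{eqn mpd for w_nu}), $(z_\nu,\g_\nu)$ is a minimal pair of definition for $w_\nu$ with $z_\nu\in K$, so Corollary \ref{Coro j(f) when deg(Q)=1} applies to both $f$ and $f-f(z_\nu)$ using the same coefficients $c_i$ for $i\geq 1$. For $f$ it gives
\[ w_\nu f = \min\{vf(z_\nu),\ vc_i+i\g_\nu\}, \]
and the largest index attaining this minimum is $h$. Since $h\geq 1$, the index $h$ does attain it, so $vc_h + h\g_\nu = w_\nu f \le vc_i + i\g_\nu$ for all $i\ge 1$, with strict inequality for $i>h$. The value $vf(z_\nu)$ attached to the constant term does not affect this, because index $0$ is smaller than $h$. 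Consequently
\[ w_\nu(f-f(z_\nu)) = \min_{i\ge1}\{vc_i+i\g_\nu\} = vc_h+h\g_\nu, \]
and the maximal index attaining it is still $h$. Applying Corollary \ref{Coro j(f) when deg(Q)=1} to $f-f(z_\nu)$ then yields $j_{w_\nu}(f-f(z_\nu)) = h$.

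The only point needing care is that $h\ge1$. This is what allows removing the constant term without changing the maximal index, and it holds because $h_K(X:f)$ is a positive integer by Definition \ref{Defn rel appr deg}. There is also a slight alternative route as a check: when $vf(z_\nu) > w_\nu f$ strictly, Proposition \ref{Prop in(f)=in(g) implies j(f)=j(g)} gives the equality directly. However, equality $vf(z_\nu)=w_\nu f$ actually holds here by the choice of $\nu_0$, so the coefficientwise argument above is the one to use. I expect no real obstacle; the step requiring attention is the bookkeeping that the same $\nu_0$ serves both Proposition \ref{Prop h(f) = j_w_nu(f) when f has limits} and Lemma \ref{Lemma w_nu f = vf(z_nu)}, which is handled by taking the maximum of the two thresholds.
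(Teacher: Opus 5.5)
Your proposal is correct and is essentially the paper's own argument: the paper likewise uses $j_{w_\nu}(f)=h\ge 1$ from Proposition~\ref{Prop h(f) = j_w_nu(f) when f has limits} and Corollary~\ref{Coro j(f) when deg(Q)=1} to get $vc_h+h\g_\nu\le vc_i+i\g_\nu$ for all $i$, strictly for $i>h$, and then reapplies the corollary to $f(X)-f(z_\nu)$, where dropping the constant term changes nothing. You are also right that Proposition~\ref{Prop in(f)=in(g) implies j(f)=j(g)} cannot be used instead, since $vf(z_\nu)=vc_h+h\g_\nu$ means the constant term also attains the minimum.
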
	

The following corollary is immediate in light of Theorem \ref{Thm h(f) = j_w*(f) when f has limits}:

\begin{Corollary}
	Let \( f, g \in K[X] \) with \( \Z(f)\sect\Lim\C\neq\emptyset \) and \( \Z(g)\sect\Lim\C\neq\emptyset \). Then,
	\[  h_K(X:fg) = h_K(X:f) + h_K(X:g).  \]
\end{Corollary}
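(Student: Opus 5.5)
The plan is to reduce everything to Theorem \ref{Thm h(f) = j_w*(f) when f has limits}, which expresses the relative approximation degree of a polynomial with a limit root as the number of its roots lying in \(\Lim\C\). First I would check that \(fg\) falls under that theorem. Since \(\Z(fg)\) is the multiset union \(\Z(f)\uplus\Z(g)\), we have
\[ \Z(fg)\sect\Lim\C = \left(\Z(f)\sect\Lim\C\right)\uplus\left(\Z(g)\sect\Lim\C\right) \]
as multisets. In particular this intersection is nonempty, because \(\Z(f)\sect\Lim\C\) already is.

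Next I would apply (\ref{eqn h(f) = j_w*(f)}) to each of \(f\), \(g\) and \(fg\) separately. This gives
\[ h_K(X:fg)=\#\left(\Z(fg)\sect\Lim\C\right)=\#\left(\Z(f)\sect\Lim\C\right)+\#\left(\Z(g)\sect\Lim\C\right)=h_K(X:f)+h_K(X:g). \]
The only point needing care is that roots are counted with multiplicity, and the multiset union adds multiplicities, so the cardinality is additive. Alternatively one could note that \(j_{w^*}\) is additive on products directly from its definition as a count of roots, and use the second equality of (\ref{eqn h(f) = j_w*(f)}).

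I do not expect any real obstacle here; the statement is a direct consequence of the root-counting description. The one subtlety worth flagging is that the hypothesis on both \(f\) and \(g\) is genuinely needed. If \(g\) had no root in \(\Lim\C\), then \(h_K(X:g)\) would fall under a different regime, not covered by Theorem \ref{Thm h(f) = j_w*(f) when f has limits}, and the formula need not hold.
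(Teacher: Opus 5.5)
Your proposal is correct and matches the paper's approach: the paper states the corollary as immediate from Theorem~\ref{Thm h(f) = j_w*(f) when f has limits}. The argument is exactly yours. Since $\Z(fg)\sect\Lim\C\neq\emptyset$, that theorem applies to $fg$ as well as to $f$ and $g$, and the count $\#\Z(-)\sect\Lim\C$, taken with multiplicity, is additive over the multiset union $\Z(fg)=\Z(f)\cup\Z(g)$.
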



\subsection{The henselian core of \(Q\)}

\begin{Definition}
	Let $\Z(Q) = \{ a_1, \dotsc , a_d \}$ with $a=a_1$. Moreover, assume that 
	\[ \Z(Q)\sect\Lim\C = \{a_1, \dotsc , a_j\}. \]
	We define the \textbf{henselian core} of $Q(X)$ as
	\[ Q^h(X):= \prod_{i=1}^{j} (X-a_i).  \] 
\end{Definition}

In view of (\ref{eqn h(f) = j_w*(f)}), we observe that
\begin{equation}\label{eqn deg Q^h = h(Q)}
	\deg Q^h = h_K(X:Q).
\end{equation}
The following result provides justification for the nomenclature. 

\begin{Proposition}\label{Prop Q^h irr over K^h}
   The polynomial $Q^h$ lies in $K^h[X]$ and is irreducible over $K^h$.
\end{Proposition}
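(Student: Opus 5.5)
We argue via the decomposition group. Fix an extension of \(v\) to \(\overline{K}\) (the restriction of \(w\)) and let \(G:=\mathrm{Gal}(K^{sep}|K)\), with decomposition group \(D:=\{\sigma\in G \mid v\circ\sigma = v\}\), so that \(K^h\) is the fixed field of \(D\). We treat the separable closure first and handle inseparability at the end.

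The key observation is that \(\Lim\C\cap\overline{K}\) is stable under \(D\). Indeed, if \(b\in\Lim\C\cap\overline{K}\), then \(v(b-z_\nu)=\g_\nu\) for all \(\nu\). For \(\sigma\in D\) we have \(\sigma(z_\nu)=z_\nu\) and \(v\circ\sigma=v\), so \(v(\sigma b - z_\nu)=v(\sigma(b-z_\nu))=v(b-z_\nu)=\g_\nu\). Hence \(\sigma b\in\Lim\C\). Alternatively, by (\ref{eqn limit iff > g_nu}) it suffices to check \(v(a-\sigma b)\geq\g\), but the direct computation above is cleaner. Consequently \(D\) permutes the multiset \(\{a_1,\dots,a_j\}=\Z(Q)\cap\Lim\C\), preserving multiplicities because \(Q\) and \(Q^h\) have the same multiplicity pattern. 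Therefore the coefficients of \(Q^h\) are \(D\)-invariant. If \(Q\) is separable, this places \(Q^h\) in \(K^h[X]\).

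If \(Q\) is inseparable, write \(Q(X)=Q_0(X^{p^e})\) with \(Q_0\) separable. Each root of \(Q\) has multiplicity \(p^e\), and \(Q^h=\prod_{b}(X-b)^{p^e}\) over the distinct limit roots \(b\). Then \(Q^h=\tilde Q(X^{p^e})\), where \(\tilde Q=\prod(Y-b^{p^e})\) is taken over the corresponding roots of \(Q_0\). Here \(b^{p^e}\) is separable over \(K\), and \(D\) acts on the \(b^{p^e}\) through its unique extension to the purely inseparable closure, which still preserves \(v\). The same invariance argument then gives \(\tilde Q\in K^h[Y]\), hence \(Q^h\in K^h[X]\). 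Equivalently, one can work throughout with the automorphism group of \(\overline{K}|K\) fixing the chosen valuation, whose fixed field is the perfect hull of \(K^h\), and then descend.

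For irreducibility, let \(P\) be the minimal polynomial of \(a=a_1\) over \(K^h\). Since \(Q^h(a)=0\) and \(Q^h\in K^h[X]\), \(P\) divides \(Q^h\). For the converse, every conjugate of \(a\) over \(K^h\) is \(\sigma a\) with \(\sigma\in D\), and we must show that every \(a_i\) (\(i\le j\)) is of this form, with the multiplicities matching. This is the main obstacle, since it requires excluding a limit root of \(Q\) that is not \(K^h\)-conjugate to \(a\).

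The plan for this step is a degree and minimality argument. Suppose \(P\) properly divides \(Q^h\). Note that \(Q^h\) divides \(Q\) in \(K^h[X]\), and \(Q\) factors over \(K^h\) into irreducibles, each a product over a \(D\)-orbit. Let \(R:=Q/P\in K^h[X]\). Then \(R\) has a root among \(a_2,\dots,a_j\) in \(\Lim\C\), so by Lemma~\ref{Lemma vf(z) increasing iff Z(f) has limits} applied over \(K^h\), the values \(vR(z_\nu)\) are ultimately increasing, as are \(vP(z_\nu)\). The cleanest way to reach a contradiction is via Krasner-type closeness. Since \(a_i\in\Lim\C\) for \(i\le j\), all these roots satisfy \(v(a_i-a)\ge\g\). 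Then \(\sigma(a_i)\) with \(\sigma\) ranging over the conjugates of \(a_i\) over \(K^h\) is also a limit. Now use that over the henselian field \(K^h\), the roots of an irreducible polynomial are equidistant from any element of \(K^h\), in particular from \(z_\nu\). In addition, the minimal pair property (\ref{eqn mpd for w*}) transported to \(K^h\) (note that \(\C\) remains of algebraic type over \(K^h\), with \(Q\)'s \(K^h\)-factor of least degree being associated) forces the associated minimal polynomial over \(K^h\) to have all its roots in \(\Lim\C\) and to be unique up to the limit orbit.

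Concretely, I would show that any two \(K^h\)-irreducible factors of \(Q\) both having roots in \(\Lim\C\) must coincide. Suppose \(P_1\) and \(P_2\) are such factors. By henselianity, each \(P_k\) has all its roots in \(\Lim\C\), because the roots are conjugate under \(D\), which preserves \(\Lim\C\). Then \(w^*(P_1)\) and \(w^*(P_2)\) are computed via the pair \((a,\g)\), and both polynomials have degree \(\le d\). A minimal-degree argument over \(K^h\), using Theorem~\ref{Thm j(f)/j(Q)} for \(w^*\) extended to \(K^h(X)\), together with the fact that \(K^h|K\) is immediate and hence the limit structure is unchanged, would give \(P_1=P_2\). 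The exact execution of this minimality comparison is the point I expect to require the most care.
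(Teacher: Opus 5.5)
Your first half is sound: the computation $v(\sigma b-z_\nu)=v(b-z_\nu)$ shows that the decomposition group $D$ preserves $\Lim\C$. Hence $Q^h\in K^h[X]$, and the minimal polynomial $P$ of $a$ over $K^h$ divides $Q^h$. The paper uses the same observation to get $\Z(P)\subseteq\Z(Q^h)$.

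The genuine gap is irreducibility, which is the real content of the statement. You need $\deg Q^h\le\deg P$, i.e.\ that \emph{every} root of $Q$ lying in $\Lim\C$ is a $D$-conjugate of $a$, and your sketch never establishes this.
Equidistance of $K^h$-conjugates from $z_\nu$ only re-proves the inclusion you already have. The ``minimality over $K^h$'' step is circular: that the associated polynomial over $K^h$ is ``unique up to the limit orbit'' is the claim itself. The heuristic behind it also fails. Immediateness of $K^h|K$ does not keep the relevant degree at $d$: if $Q$ is reducible over $K^h$, the $K^h$-factor through $a$ already has degree $<d$ and ultimately increasing values along $\C$. Transporting the minimal pair $(a,\g)$ to $K^h$ is itself a nontrivial input, which the paper imports as \cite[Corollary 2.8]{DuttaGhosh2025}. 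Even granting it, Theorem~\ref{Thm j(f)/j(Q)} over $K^h$ only gives relations such as $\deg P_1\mid\deg P_2$ between two limit factors; it does not exclude a second one. The paper closes exactly this gap with the further imported equality $j_{w^*}(Q)=j_{w^*}(P)$ from \cite[Proposition 2.9]{DuttaGhosh2025}, which yields $\deg Q^h=j_{w^*}(Q)=j_{w^*}(P)\le\deg P$.

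The idea you are missing is the minimality of $Q$ over $K$, and it gives an elementary proof.
\begin{itemize}
\item Take nonconstant $g\in K[X]$ with $\deg g<d$. Then $g$ and all its Hasse--Schmidt derivatives have ultimately constant values along $\C$.
\item Expand $g(b)$ at $z_\nu$ for $b\in\Lim\C\cap\overline{K}$ and apply \cite[Lemma 4]{Kaplansky1942}. This shows $v(g(b)-g(z_\nu))$ is ultimately strictly increasing, which forces $vg(b)$ to equal the stable value of $vg(z_\nu)$, independently of $b$.
\item If $\tau\in\Aut(\overline{K}|K)$ satisfies $\tau a\in\Lim\C$, this gives $v(\tau(g(a)))=v(g(\tau a))=v(g(a))$ for all such $g$. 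Since every element of $K(a)$ is such a $g(a)$, we get $v\circ\tau=v$ on $K(a)$.
\item By conjugacy of the extensions of $v|_{K(a)}$ to $\overline{K}$, there is $\rho\in\Aut(\overline{K}|K(a))$ with $v\circ\tau=v\circ\rho$. Then $\sigma:=\tau\rho^{-1}\in D$ and $\sigma a=\tau a$.
\item Since $K^h|K$ is separable, $a$ has the same inseparable degree over $K$ and over $K^h$. So the multiplicities agree and $P=Q^h$.
\end{itemize}
This route replaces the paper's imported results on minimal pairs and $j$-invariants over $K^h$ with a short Kaplansky-type computation.
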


\begin{proof}
	By (\ref{eqn mpd for w*}), the pair $(a,\g)$ is a minimal pair of definition for $(K(X)|K,w^*)$. Hence, \cite[Corollary 2.8]{DuttaGhosh2025} implies that
	\begin{equation}\label{eqn mpd for w^* over K^h}
		(a,\g) \text{ is a minimal pair of definition for } (K^h(X)|K^h,w^*).
	\end{equation}
	Let $Q^\prime$ denote the minimal polynomial of $a$ over $K^h$. It then follows from \cite[Proposition 2.9]{DuttaGhosh2025} that
	\begin{equation}\label{eqn j leq deg F}
		\deg Q^h = j = j_{w^*}(Q) = j_{w^*}(Q^\prime)\leq \deg Q^\prime.
	\end{equation}
Now take any $b\in\Z(Q^\prime)$. Then $b = \s a$ for some \( \s \) in the decomposition group $G^d(\overline{K}|K,v)$. Since $\s$ is valuation preserving, we have
\[  v(b-z_\nu) = v\s(a-z_\nu) = v(a-z_\nu) = \g_\nu \text{ for all } \nu<\l.  \]
Hence $b$ is also a limit of $\C$. As a consequence, 
\begin{equation}\label{eqn Z(F) subset Lim C}
	\Z(Q^\prime)\subseteq \Z(Q)\sect\Lim\C = \Z(Q^h).
\end{equation}
From (\ref{eqn j leq deg F}) and (\ref{eqn Z(F) subset Lim C}), we conclude that $Q^\prime=Q^h$, completing the proof. 
\end{proof}

The following consequence generalizes \cite[Corollary 9.2]{KuhlmannVlahu2014}. 

\begin{Corollary}\label{Coro h(Q) = deg(Q) iff Q irr}
	$h_K(X:Q) = \deg Q$ if and only if $Q$ is irreducible over $K^h$.
\end{Corollary}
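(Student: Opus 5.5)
The corollary follows by combining (\ref{eqn deg Q^h = h(Q)}) with Proposition \ref{Prop Q^h irr over K^h}. By (\ref{eqn deg Q^h = h(Q)}), we have $h_K(X:Q) = \deg Q^h$, and $Q^h$ divides $Q$ in $\overline{K}[X]$ because its roots form a sub-multiset of $\Z(Q)$. Hence $h_K(X:Q) = \deg Q$ holds if and only if $Q^h = Q$, that is, if and only if $\Z(Q)\subseteq\Lim\C$. It therefore suffices to show that $Q^h = Q$ is equivalent to $Q$ being irreducible over $K^h$.

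For the forward direction, assume $Q^h = Q$. Proposition \ref{Prop Q^h irr over K^h} gives that $Q^h$ is irreducible over $K^h$, so $Q$ is irreducible over $K^h$.

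For the converse, assume $Q$ is irreducible over $K^h$. By Proposition \ref{Prop Q^h irr over K^h}, $Q^h\in K^h[X]$ is a monic irreducible factor of $Q$ over $K^h$, and it is nonconstant because $a\in\Z(Q^h)$. Both $Q$ and $Q^h$ are monic and irreducible over $K^h$, and $Q^h$ divides $Q$, so $Q = Q^h$. Alternatively, $Q$ is then the minimal polynomial of $a$ over $K^h$, which is exactly the polynomial $Q'$ shown to equal $Q^h$ in that proof.

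No step here is a real obstacle: all of the content sits in Proposition \ref{Prop Q^h irr over K^h} and the identity (\ref{eqn deg Q^h = h(Q)}). The only point requiring care is that $Q^h$ divides $Q$ as polynomials, not just that their root sets are nested. This holds because $\Z(Q^h)$ is defined as a sub-multiset of the multiset $\Z(Q)$, and in any case $Q$ is separable over $K^h$ when it is irreducible there, since $K^h|K$ is separable and $Q$ is irreducible over $K$.
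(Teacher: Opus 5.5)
Your argument is correct and is essentially the paper's proof: the identity $h_K(X:Q)=\deg Q^h$ reduces the claim to $Q=Q^h$, and Proposition \ref{Prop Q^h irr over K^h} settles both directions. One caveat: your closing aside is not justified, namely that $Q$ is separable whenever it is irreducible over $K^h$ (over an imperfect $K$, an inseparable irreducible $Q$ stays inseparable over $K^h$); but it is not needed, since $Q^h\mid Q$ already follows from the multiset definition of $Q^h$.
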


\begin{proof}
	Since $h_K(X:Q) = \deg Q^h$ by (\ref{eqn h(f) = j_w*(f)}), we have $h_K(X:Q) = \deg Q$ if and only if $Q=Q^h$. The assertion therefore follows immediately from Proposition \ref{Prop Q^h irr over K^h}. 
\end{proof}

\begin{Remark}
	Corollary \ref{Coro h(Q) = deg(Q) iff Q irr} implies that $h_K(X:Q) = \deg Q$ whenever $K$ is dense in its henselization $K^h$. Indeed, \cite[Corollary 3.2]{Dutta2023MathNach} yields $Q$ is irreducible over the completion $\widehat{K}$. In particular, this holds whenever $K$ is henselian or of rank one.  
\end{Remark}

At the other end of the spectrum lies the case when $h_K(X:Q) = 1$, characterized by the following proposition:

\begin{Proposition}\label{Prop h(Q)=1}
	The following are equivalent:
	\begin{enumerate}[label = (\roman*)]
		\item \(h_K(X:Q) = 1\),
		\item \(\Z(Q)\sect\Lim\C\) contains an element of \(K^h\), 
		\item \(\Lim\C\) contains an element of \(K^h\).
	\end{enumerate}
\end{Proposition}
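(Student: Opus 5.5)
We prove the cycle (i)\(\Rightarrow\)(ii)\(\Rightarrow\)(iii)\(\Rightarrow\)(i), relying mainly on (\ref{eqn deg Q^h = h(Q)}) and Proposition \ref{Prop Q^h irr over K^h}.

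For (i)\(\Rightarrow\)(ii): assume \(h_K(X:Q)=1\). Then (\ref{eqn deg Q^h = h(Q)}) gives \(\deg Q^h = 1\), so \(Q^h = X - a\) with \(\Z(Q)\sect\Lim\C = \{a\}\). By Proposition \ref{Prop Q^h irr over K^h}, \(Q^h\in K^h[X]\), hence \(a\in K^h\). This proves (ii).

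The implication (ii)\(\Rightarrow\)(iii) is immediate, since \(\Z(Q)\sect\Lim\C\subseteq \Lim\C\).

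For (iii)\(\Rightarrow\)(i), take \(b\in\Lim\C\sect K^h\).
\begin{itemize}
\item The idea is to show that \(a\) lies in \(K^h\). Since \(b\in\Lim\C\), (\ref{eqn (b,g) pair of defn iff b limit}) shows that \((b,\g)\) is a pair of definition for \((K(X)|K,w^*)\).
\item Viewed over \(K^h\), the pair \((b,\g)\) is also a pair of definition for \((K^h(X)|K^h,w^*)\). Because \(b\in K^h\) has degree one, it is automatically minimal there.
\item By (\ref{eqn mpd for w^* over K^h}) (that is, \cite[Corollary 2.8]{DuttaGhosh2025}), \((a,\g)\) is also a minimal pair of definition for \((K^h(X)|K^h,w^*)\). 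Minimal pairs all have the same degree, so \([K^h(a):K^h]=1\).
\item Hence the minimal polynomial \(Q'\) of \(a\) over \(K^h\) is linear. By the proof of Proposition \ref{Prop Q^h irr over K^h}, \(Q'=Q^h\), so \(\deg Q^h=1\), and (\ref{eqn deg Q^h = h(Q)}) gives \(h_K(X:Q)=1\).
\end{itemize}

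An alternative route to (iii)\(\Rightarrow\)(i) is to apply \(j_{w^*}\) over \(K^h\), using \(j_{w^*}(Q)=j_{w^*}(Q')\le\deg Q'\) from (\ref{eqn j leq deg F}).

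The main obstacle is the step in (iii)\(\Rightarrow\)(i) asserting that minimal pairs of definition over \(K^h\) all have the same degree. This follows directly from the definition: if \((a,\g)\) and \((b,\g)\) are both pairs of definition, then minimality of \(a\) gives \([K^h(a):K^h]\le[K^h(b):K^h]=1\). One should also check that the Alexandru–Zaharescu criterion (Proposition \ref{Prop pair of defn}) applies over the base \(K^h\). It does, because the valuation \(v_{b,\g}\) on \(\overline{K}(X)\) is the same regardless of the base field, so \((b,\g)\) being a pair of definition is a statement that does not depend on whether we work over \(K\) or \(K^h\).
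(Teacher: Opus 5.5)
Your proposal is correct and follows the paper's proof essentially step for step. In (i)$\Rightarrow$(ii) you use $\deg Q^h = h_K(X:Q)$ together with $Q^h\in K^h[X]$, and in (iii)$\Rightarrow$(i) you compare the minimal pair $(a,\gamma)$ for $(K^h(X)|K^h,w^*)$ with the degree-one pair $(b,\gamma)$, which forces $a\in K^h$ and hence $\deg Q^h=1$. The $j$-invariant ``alternative route'' you mention is not needed.
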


\begin{proof}
	Since \(h_K(X:Q) = \deg Q^h\) by (\ref{eqn deg Q^h = h(Q)}), Proposition \ref{Prop Q^h irr over K^h} immediately yields \((i) \Longrightarrow (ii)\). 
	
	The implication \((ii)\Longrightarrow(iii)\) is tautological. 
	
	We now assume that \(\Lim\C\) contains an element of \(K^h\), say \(b\). Then \((b,\g)\) is a minimal pair of definition for \((K^h(X)|K^h,w^*)\). On the other hand, \( (a,\g) \) is also a minimal pair of definition for \( (K^h(X)|K^h,w^*) \) by (\ref{eqn mpd for w^* over K^h}). It follows that \( a\in K^h \). By Proposition \ref{Prop Q^h irr over K^h}, the polynomial \(Q^h\) is the minimal polynomial of \(a\) over \(K^h\). Hence \( \deg Q^h = 1\). Since \(h_K(X:Q) = \deg Q^h\), we conclude that \( h_K(X:Q)=1 \).  
\end{proof}

The next result illustrates that the relative approximation degree is preserved under passage to the henselization. 

\begin{Corollary}
	Assume that \( h_K(X:Q)>1 \). Then \(Q^h\) is an associated minimal polynomial to \(\C\) viewed as a pseudo-convergent sequence in \(K^h\). Moreover,
	\[ h_K(X:Q) = \deg Q^h = h_{K^h}(X:Q^h).   \]
\end{Corollary}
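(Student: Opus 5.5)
We need to show two things: \(Q^h\) is an associated minimal polynomial to \(\C\) viewed as a pseudo-convergent sequence in \(K^h\), and \(h_{K^h}(X:Q^h)=\deg Q^h\). The plan is to work over \(K^h\) with the same sequence \(\C\), the same limit \(X\), and the extension of \(w\) to \(K^h(X)\) induced by the fixed extension to \(\overline{K}(X)\).

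First I check that \(\C\) is still of algebraic type over \(K^h\) and has no limit in \(K^h\). By Proposition \ref{Prop h(Q)=1}, the hypothesis \(h_K(X:Q)>1\) means that \(\Lim\C\) contains no element of \(K^h\). Hence \(\C\) has no limit in \(K^h\). By Proposition \ref{Prop Q^h irr over K^h}, \(Q^h\in K^h[X]\). Its roots include \(a\in\Lim\C\), so Lemma \ref{Lemma vf(z) increasing iff Z(f) has limits}, applied over \(K^h\) (its proof only uses roots in \(\overline{K}\)), shows that \(vQ^h(z_\nu)\) is ultimately strictly increasing. Thus \(\C\) is of algebraic type over \(K^h\), with \(\deg_{K^h}\C\leq \deg Q^h\).

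Next comes minimality, which is the main point. Suppose \(g\in K^h[X]\) is monic with \(vg(z_\nu)\) not ultimately fixed. By the same lemma, \(g\) has a root \(b\in\Lim\C\). By (\ref{eqn limit iff > g_nu}), \((b,\g)\) is then a pair of definition for \(w^*\). By (\ref{eqn mpd for w^* over K^h}), \((a,\g)\) is a minimal pair of definition for \((K^h(X)|K^h,w^*)\). Therefore
\[ \deg g\geq [K^h(b):K^h]\geq [K^h(a):K^h]=\deg Q^h, \]
where the last equality holds because \(Q^h\) is the minimal polynomial of \(a\) over \(K^h\) (Proposition \ref{Prop Q^h irr over K^h}). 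So \(Q^h\) is a monic polynomial of least degree whose value is not ultimately fixed, that is, an associated minimal polynomial over \(K^h\).

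Finally, apply the earlier results with \(K^h\) in place of \(K\). Theorem \ref{Thm h(f) = j_w*(f) when f has limits} gives \(h_{K^h}(X:Q^h)=\#\Z(Q^h)\sect\Lim\C=\deg Q^h\), since every root of \(Q^h\) is a limit of \(\C\) by definition of the henselian core. Combined with (\ref{eqn deg Q^h = h(Q)}), this yields the claimed chain of equalities.

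The only care needed is that everything used earlier remains valid over \(K^h\). This requires the immediacy of \((K^h(X)|K^h,w)\), which follows since henselization is immediate. Rather than relying on it, one could use (\ref{eqn mpd for w^* over K^h}) directly, as done above, to obtain the minimality.
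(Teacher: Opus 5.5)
Your proposal is correct and follows essentially the paper's argument. You use Proposition \ref{Prop h(Q)=1} to see that \(\C\) has no limit in \(K^h\), the minimal pair \((a,\g)\) from (\ref{eqn mpd for w^* over K^h}) to bound the degree of any competitor, and Lemma \ref{Lemma vf(z) increasing iff Z(f) has limits} to see that the value of \(Q^h\) is not ultimately fixed. The differences are cosmetic: you bound an arbitrary \(g\) directly rather than working with an associated minimal polynomial \(Q'\) over \(K^h\), and you get \(h_{K^h}(X:Q^h)=\deg Q^h\) from Theorem \ref{Thm h(f) = j_w*(f) when f has limits} instead of Corollary \ref{Coro h(Q) = deg(Q) iff Q irr}.
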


\begin{proof}
	In view of Proposition \ref{Prop h(Q)=1}, the condition \( h_K(X:Q)>1 \) implies that \(\C\) has no limits in \(K^h\). Hence we can view \(\C\) as a pseudo-convergent sequence of algebraic type in \(K^h\). Let \(Q^\prime\in K^h[X]\) be an associated minimal polynomial to \(\C\) over \(K^h\), and choose \( b\in\Z(Q^\prime) \). Applying (\ref{eqn mpd for w*}) over \(K^h\), we obtain that \( (b,\g) \) is a minimal pair of definition for \( (K^h(X)|K^h,w^*) \). On the other hand, \( (a,\g) \) is also a minimal pair of definition for \( (K^h(X)|K^h,w^*) \) by (\ref{eqn mpd for w^* over K^h}). Thus
	\[ \deg Q^h = \deg Q^\prime. \]
	Since \( \{vQ^h(z_\nu)\}_{\nu<\l} \) is ultimately strictly increasing by Lemma \ref{Lemma vf(z) increasing iff Z(f) has limits}, we conclude that \(Q^h\) is an associated minimal polynomial to \(\C\). The equality \( \deg Q^h = h_{K^h}(X:Q^h) \) is now a consequence of Corollary \ref{Coro h(Q) = deg(Q) iff Q irr}. Together with (\ref{eqn deg Q^h = h(Q)}), this yields 
	\[ h_K(X:Q) = \deg Q^h = h_{K^h}(X:Q^h), \]
	thereby completing the proof.  
\end{proof}


\section{The case \( \Z(f)\sect\Lim\C = \emptyset \)}\label{Section Z(f) has no limits}
We now tackle the complementary case when no root of \(f\) is a limit of \(\C\). Since \( j_{w^*}(f) = 0 \) in this case, we instead focus on the values \( j_{w_\nu}(f) \). 

\subsection{The case \(\deg f \leq \deg\C\)} We begin with the case \( \deg f \leq \deg \C \). In this framework, \cite[Lemma 5.2]{KuhlmannVlahu2014} guarantees that \( \b_K(X:f) \) and \( h_K(X:f) \) are well-defined. The following proposition is an analogue of Theorem \ref{Thm j-invariant characterization}.

\begin{Proposition}\label{Prop h(f) = j_{w_nu} deg f < deg C}
	Assume that \( 0<\deg f \leq \deg\C \) and \( \Z(f)\sect\Lim\C=\emptyset \). Then the conclusions of Theorem \ref{Thm j-invariant characterization} hold true.  
\end{Proposition}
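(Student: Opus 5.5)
We have to show that there is \(\nu_0<\l\) such that for all \(\nu>\nu_0\),
\[ h_K(X:f)=j_{w_\nu}(f(X)-f(z_\nu)) \quad\text{and}\quad \b_K(X:f)=vc_h, \]
where \(h=h_K(X:f)\) and \(c_i\) are the Taylor coefficients of \(f\) at \(z_\nu\). The plan is to use the Kuhlmann--Vlahu description of these invariants when \(\deg f\le\deg\C\). The Hasse--Schmidt derivatives \(f_i:=\partial_i f\), for \(i\ge 1\), have degree \(<\deg f\le d\). Hence no root of any \(f_i\) is a limit of \(\C\), and by Lemma \ref{Lemma vf(z) increasing iff Z(f) has limits} each sequence \(\{vf_i(z_\nu)\}\) is ultimately constant, say equal to \(\b_i\). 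For \(\nu\) large, the Taylor expansion \(f(X)-f(z_\nu)=\sum_{i\ge1} f_i(z_\nu)(X-z_\nu)^i\) therefore has coefficients \(c_i=f_i(z_\nu)\) with \(vc_i=\b_i\) fixed.

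Since the \(\b_i\) are constant and \(\{\g_\nu\}\) is strictly increasing, the finitely many functions \(\nu\mapsto \b_i+i\g_\nu\) can coincide pairwise for at most one \(\nu\) each. After enlarging \(\nu_0\), the minimum \(\min_i\{\b_i+i\g_\nu\}\) is attained at a single index \(h\), independent of \(\nu\). This is the argument of \cite[Lemma 5.2]{KuhlmannVlahu2014}, which yields \(h=h_K(X:f)\) and \(\b_K(X:f)=\b_h=vc_h\). To be safe I would re-derive this rather than cite it. Since \(X\in\Lim\C\), we have \(w(X-z_\nu)=\g_\nu\). Because \(X\) is transcendental and \(w(X-z_\nu)=\g_\nu\), the valuation \(w\) restricted to the monomials satisfies \(w(f(X)-f(z_\nu))=\min_i\{vc_i+i\g_\nu\}\) when the minimum is unique. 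That is just the ultrametric inequality with a unique minimum.

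Next, identify this with the \(j\)-invariant. By (\ref{eqn mpd for w_nu}), \((z_\nu,\g_\nu)\) is a minimal pair of definition for \(w_\nu\) with \(z_\nu\in K\). Corollary \ref{Coro j(f) when deg(Q)=1} applied to \(f(X)-f(z_\nu)\) then gives
\[ w_\nu(f(X)-f(z_\nu))=\min_i\{vc_i+i\g_\nu\}, \qquad j_{w_\nu}(f(X)-f(z_\nu))=\max\{i\mid \text{the minimum is attained at } i\}. \]
Since the minimum is attained uniquely at \(h\), this maximum equals \(h=h_K(X:f)\). The constant term vanishes, so the indices run over \(i\ge1\), consistent with the statement.

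The main obstacle is the uniqueness of the minimizing index. Ties \(\b_i+i\g_\nu=\b_k+k\g_\nu\) with \(i\neq k\) are excluded for all but at most one \(\nu\) per pair, because \(\g_\nu\) is strictly increasing. We also need \(c_i\neq0\) for the relevant \(i\); indices with \(f_i=0\) are simply omitted, and for \(i=\deg f\) the coefficient is the nonzero leading coefficient. One subtlety is that \(f_i(z_\nu)\) could vanish for some \(\nu\) while \(f_i\neq0\). This is excluded eventually, because ultimately constant values are finite: a root of \(f_i\) equal to \(z_\nu\) for cofinally many \(\nu\) is impossible, since the \(z_\nu\) are distinct and \(f_i\) has finitely many roots. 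With these points handled, the conclusions of Theorem \ref{Thm j-invariant characterization} follow for all \(\nu>\nu_0\).
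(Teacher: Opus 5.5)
Your proposal is correct and takes essentially the same route as the paper. The paper also takes the Hasse--Schmidt derivatives \(f_i\) and uses \(\deg f_i<\deg\C\) to get eventually constant values \(\b_i\). It then invokes \cite[Lemma 4]{Kaplansky1942} for a unique minimizing index \(h\), reads off \(h_K(X:f)=h\) and \(\b_K(X:f)=vc_h\), and applies Corollary \ref{Coro j(f) when deg(Q)=1} with the minimal pair \((z_\nu,\g_\nu)\). One precision on your re-derivation of that lemma: \(h\) is eventually independent of \(\nu\) because each difference \(\b_k-\b_i+(k-i)\g_\nu\) is strictly increasing in \(\nu\), hence eventually of constant sign. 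The fact that ties occur at most once per pair is not enough on its own.
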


\begin{proof}
	We consider the Hasse-Schmidt derivatives \( f_i(X) \) of \(f\), satisfying the identity
	\begin{equation}\label{eqn Taylor expansion}
		f(X)-f(z) = \sum_{i=1}^{n} f_i(z)(X-z)^i  
	\end{equation}
	for all \( z\in K \). Since \(\deg f_i < \deg f \leq \deg \C\), the sequence of values \( \{vf_i(z_\nu)\}_{\nu<\l} \) is ultimately stable. Denote this eventual constant value by \( \b_i \). By \cite[Lemma 4]{Kaplansky1942}, there exist an ordinal \(\nu_0 \) and an integer \(1\leq h\leq n\) such that 
	\[ \b_h + h\g_\nu < \b_i + i\g_\nu   \]
	for all \(\nu>\nu_0\) and \( i\neq h \). Thus \( w\left( f(X)-f(z_\nu) \right) = \b_h + h\g_\nu \) for all \( \nu>\nu_0 \) by the triangle inequality. Hence
	\begin{equation}\label{eqn beta_K and h_K}
		\b_h = \b_K(X:f) \text{ and } h = h_K(X:f).
	\end{equation}
	Increasing \( \nu_0 \) if necessary, by Lemma \ref{Lemma w_nu f = vf(z_nu)}, we may additionally assume that
	\[ w_\nu f = vf(z_\nu) \text{ and } w_\nu f_i = vf_i(z_\nu) = \b_i \text{ for all } \nu>\nu_0.  \]
	Now consider the Taylor series expansion
	\[ f(X)-f(z_\nu) = \sum_{i=1}^{n} f_i(z_\nu)(X-z_\nu)^i.   \]
	 By (\ref{eqn beta_K and h_K}), we obtain that
	\[  \b_K(X:f) = \b_h = vf_h(z_\nu),  \]
	which proves the second assertion. Moreover,
	\[  w_\nu\left(f_h(z_\nu) (X-z_\nu)^h\right) = \b_h + h\g_\nu < \b_i + i\g_\nu = w_\nu\left(f_i(z_\nu) (X-z_\nu)^i\right)   \]
	for all \( i\neq h \). Hence,
	\[  w_\nu\left( f(X)-f(z_\nu) \right) = w_\nu\left(f_h(z_\nu) (X-z_\nu)^h\right) < w_\nu\left(f_i(z_\nu) (X-z_\nu)^i\right)   \]
	for all \( i\neq h \). Applying Corollary \ref{Coro j(f) when deg(Q)=1} together with (\ref{eqn mpd for w_nu}), we conclude that
	 \[ j_{w_\nu} \left(f(X) - f(z_\nu)\right) = h.   \]
	 Combining this with (\ref{eqn beta_K and h_K}) yields
	 \[ j_{w_\nu} \left(f(X) - f(z_\nu)\right) = h_K(X:f) \text{ for all } \nu>\nu_0.   \] 
\end{proof}

\begin{Remark}
	The proof of Proposition \ref{Prop h(f) = j_{w_nu} deg f < deg C} does not depend on the finiteness of \( \deg\C \). Hence the conclusions hold true also when \(\C\) is of transcendental type. 
\end{Remark}


\subsection{The case \( \deg f > \deg\C \)} Assume that \textit{\( (K,v) \) is henselian} and \( \deg f> \deg\C \). Consider (\ref{eqn Q-expansion}), the \( Q \)-expansion of \(f\):
\[ f = \sum_{i=0}^{r} f_iQ^i.  \]
We first show that \( h_K(X:f) \) and \( \b_K(X:f) \) are well-defined in this framework. To determine these quantities, we compare the contributions of the \textit{constant} part \(f_0\) and the higher \(Q\)-divisible part \(f-f_0\). Take \( \nu_0 \) satisfying the conditions of Theorem \ref{Thm j-invariant characterization} and Proposition \ref{Prop h(f) = j_{w_nu} deg f < deg C}. Set \[ f^\prime(X):= f(X) - f_0(X). \]
Since \(Q\) divides \(f^\prime\), the sequence \( \{vf^\prime(z_\nu)\} \) is ultimately strictly increasing. Then,
\[ w\left( f^\prime(X) - f^\prime(z_\nu)  \right) = \b^\prime + h^\prime\g_\nu \text{ for all } \nu> \nu_0,   \]
where \( \b^\prime = \b_K(X:f^\prime) \) and \( h^\prime = h_K(X:f^\prime) \). Moreover, (\ref{eqn h(Q)| h(f)}) and Corollary \ref{Coro h(Q) = deg(Q) iff Q irr} yield that
\[ d \text{ divides } h^\prime.  \]
On the other hand, the fact \(\deg f_0 < \deg\C\) implies that
\[ w\left( f_0(X) - f_0(z_\nu)  \right) = \b_0 + h_0\g_\nu \text{ for all } \nu> \nu_0,   \]
where \( \b_0 = \b_K(X:f_0) \) and \( h_0 = h_K(X:f_0) \). Observe that
\[  h_0 \leq \deg f_0 < d \leq h^\prime.  \]
As a consequence, we infer from the monotonicity of \( \{\g_\nu\} \) that the expressions \( \b_0 + h_0\g_\nu \) and \( \b^\prime + h^\prime\g_\nu \) can coincide for at most one index \(\nu\). Hence, after enlarging \( \nu_0 \) if necessary, we may further assume that either 
\begin{equation}\label{eqn condn Type A}
	\b_0 + h_0\g_\nu < \b^\prime + h^\prime\g_\nu \text{ for all } \nu>\nu_0, 
\end{equation}
or 
\begin{equation}\label{eqn condn Type B}
	\b_0 + h_0\g_\nu > \b^\prime + h^\prime\g_\nu \text{ for all } \nu>\nu_0. 
\end{equation} 
Now consider the expression 
\[  f(X) - f(z_\nu) = f_0(X) - f_0(z_\nu) + f^\prime(X) - f^\prime(z_\nu).  \]
The triangle inequality now yields that either 
\begin{equation}\label{eqn f Type A h_0}
	\begin{aligned}
		w\left( f(X) - f(z_\nu)  \right) &= \b_0 + h_0 \g_\nu = w\left( f_0(X) - f_0(z_\nu) \right) \\
		& < \b^\prime + h^\prime\g_\nu =  w\left(f^\prime(X)-f^\prime(z_\nu)\right),	
	\end{aligned}
\end{equation}
or, 
\begin{equation}\label{eqn f Type B hd}
	\begin{aligned}
		w\left( f(X) - f(z_\nu)  \right) & = \b^\prime + h^\prime\g_\nu  = w \left( f^\prime(X) - f^\prime(z_\nu)  \right)	\\
		&< \b_0 + h_0\g_\nu = w\left( f_0(X) - f_0(z_\nu) \right),
	\end{aligned}
\end{equation}
for all \( \nu>\nu_0 \). We will say that \(f\) is of \textbf{Type A} in the setup of (\ref{eqn f Type A h_0}), and \(f\) is of \textbf{Type B} in the situation of (\ref{eqn f Type B hd}). Therefore, 
\begin{equation}\label{eqn Type A beta(f) and h(f)}
	\begin{aligned}
		\b_K(X:f) &= \b_K(X:f_0),\\
		h_K(X:f) &= h_K(X:f_0),
	\end{aligned} \qquad \text{if \(f\) is of Type A.}
\end{equation}
On the other hand, 
\begin{equation}\label{eqn Type B beta(f) and h(f)}
	\begin{aligned}
		\b_K(X:f) &= \b_K(X:f^\prime) = \b_K(X:f-f_0),\\
		 h_K(X:f) &= h_K(X:f^\prime) = h_K(X:f-f_0),
	\end{aligned} \qquad \text{if \(f\) is of Type B.}
\end{equation}

\begin{Remark}\label{Rmk beta` and h`}
	We can present explicit descriptions of \(h^\prime\) and \(\b^\prime\) in the above scenario. By (\ref{eqn h(f) = j_w*(f)}) and Corollary \ref{Coro h(Q) = deg(Q) iff Q irr}, we have that
	\[ \#\Z(Q)\sect \Lim\C = d.  \]
	Hence every root of \(Q \) lies in \(\Lim\C\). It follows that \( v(z_\nu-a_i) = \g_\nu \) for every root \(a_i\) of \(Q\) and thus 
	\begin{equation}\label{eqn w_nu Q = d gamma_nu}
		w_\nu Q = vQ(z_\nu) = d\g_\nu \text{ for all } \nu<\l.
	\end{equation}
	Since \(  \deg f_i < \deg\C \), the values \( \{vf_i(z_\nu)\}_{\nu<\l} \) are ultimately stable. Denote this fixed value by \(\b_i\). Choose \( \nu_0<\l \) large enough such that the following statements hold true for all \( \nu>\nu_0 \):
	\begin{enumerate}[label=(\Roman*)]
		\item \( w_\nu f_i = v f_i(z_\nu) = \b_i \),
		\item there exists an integer \( h\in\{ 1,\dotsc , r \} \) satisfying
		\[  \b_h+ hd\g_\nu < \b_i + id\g_\nu \text{ for all } i\neq h, \, i\geq 1.   \]
	\end{enumerate}
	Condition (I) is possible by Lemma \ref{Lemma w_nu f = vf(z_nu)}, whereas the existence of \(h\) is guaranteed by \cite[Lemma 4]{Kaplansky1942}. From (\ref{eqn w_nu Q = d gamma_nu}) and Condition (I), we obtain
	\[  w_\nu (f_iQ^i) = \b_i + id\g_\nu = v\left((f_iQ^i)(z_\nu)\right)   \] 
	for all \( \nu>\nu_0 \). Hence Condition (II) and the triangle inequality yield
	\[ w_\nu f^\prime = v f^\prime(z_\nu) = \b_h + hd\g_\nu < \b_i + id\g_\nu  \]
	for all \( \nu>\nu_0 \) and \( i\neq h \). Consequently, 
	\begin{equation}\label{eqn w_nu (f-f_0) geq w(f-f_0)}
		w_\nu\left(  f^\prime(X) - f^\prime(z_\nu)  \right) \geq \b_h + hd\g_\nu \text{ for all } \nu>\nu_0.
	\end{equation}
	On the other hand, the fact that the sequence \( \{ vf^\prime(z_\nu) \} \) is ultimately strictly increasing implies that \( wf^\prime > vf^\prime(z_\nu) \) by \cite[Lemma 5.2]{KuhlmannVlahu2014}. As a consequence,
	\begin{equation}\label{eqn w((f-f_0) - (f-f_0)(z-nu))}
		w\left( f^\prime(X) - f^\prime(z_\nu) \right) = vf^\prime(z_\nu) = \b_h + hd\g_\nu \text{ for all } \nu>\nu_0.
	\end{equation}
	Since \( w_\nu\leq w \) on \( K[X] \), we conclude that
	\[  w_\nu \left( f^\prime(X) - f^\prime(z_\nu)  \right) = \b_h + hd\g_\nu = w\left( f^\prime(X) - f^\prime(z_\nu)  \right)  \]
	for all \( \nu>\nu_0\). Consequently, 
	\[ \b^\prime = \b_h \text{ and } h^\prime = hd.   \]
\end{Remark}

\pars We now present an analogue of Theorem \ref{Thm j-invariant characterization} in this setup:

\begin{Proposition}\label{Prop h(f) = j_{w_nu}(f) deg f > deg C}
	Assume that \( (K,v) \) is henselian. Let \( f\in K[X] \) with \( \deg f > \deg\C \) such that \( \Z(f)\sect\Lim\C = \emptyset \). Then the conclusions of Theorem \ref{Thm j-invariant characterization} hold true. 
\end{Proposition}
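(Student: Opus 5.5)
The plan is to split according to whether \(f\) is of Type A or Type B, and to reduce to the two cases already treated. We use that the Taylor expansion is additive: writing
\[ f_0(X)-f_0(z_\nu)=\sum_{i\ge 1} c^{(0)}_i (X-z_\nu)^i, \qquad f^\prime(X)-f^\prime(z_\nu)=\sum_{i\ge1} c^\prime_i (X-z_\nu)^i, \]
the coefficients of \(f(X)-f(z_\nu)\) are \(c_i=c^{(0)}_i+c^\prime_i\). By Corollary \ref{Coro j(f) when deg(Q)=1} and (\ref{eqn mpd for w_nu}), it suffices to show the following for all large \(\nu\): the minimum of \(vc_i+i\g_\nu\) equals \(\b_K(X:f)+h_K(X:f)\g_\nu\), and the largest index attaining it is \(h_K(X:f)\). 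This simultaneously gives \(j_{w_\nu}(f(X)-f(z_\nu))=h_K(X:f)\) and \(vc_h=\b_K(X:f)\).

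First, I collect the information on each part, enlarging \(\nu_0\) finitely many times.

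\emph{The part \(f_0\).} If \(f_0\) is constant, its Taylor part vanishes and only Type B can occur. Otherwise \(0<\deg f_0<d\). Then \(\Z(f_0)\sect\Lim\C=\emptyset\); otherwise \(\{vf_0(z_\nu)\}\) would be ultimately increasing by Lemma \ref{Lemma vf(z) increasing iff Z(f) has limits}, contradicting the minimality of \(Q\). So Proposition \ref{Prop h(f) = j_{w_nu} deg f < deg C}, or rather its proof, applies. It gives \(vc^{(0)}_{h_0}+h_0\g_\nu=\b_0+h_0\g_\nu\), while \(vc^{(0)}_i+i\g_\nu>\b_0+h_0\g_\nu\) for all \(i\neq h_0\).

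\emph{The part \(f^\prime\).} Here \(Q\mid f^\prime\), so Theorem \ref{Thm j-invariant characterization} applies to \(f^\prime\). It gives \(j_{w_\nu}(f^\prime(X)-f^\prime(z_\nu))=h^\prime\) and \(vc^\prime_{h^\prime}=\b^\prime\). Combined with Corollary \ref{Coro j(f) when deg(Q)=1}, this says three things:
\[ \min_i\{vc^\prime_i+i\g_\nu\}=\b^\prime+h^\prime\g_\nu, \quad \text{this minimum is attained at } i=h^\prime, \quad vc^\prime_i+i\g_\nu>\b^\prime+h^\prime\g_\nu \text{ for } i>h^\prime. \]
The key point is that \(j_{w_\nu}=h^\prime\) together with \(vc^\prime_{h^\prime}=\b^\prime\) pins down the value \(w_\nu(f^\prime(X)-f^\prime(z_\nu))\). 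I expect this identification to be the main point needing care. The rest is a comparison of terms.

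\emph{Type A}, i.e.\ (\ref{eqn condn Type A}). Take \(i=h_0\). We have \(v(c^{(0)}_{h_0})+h_0\g_\nu=\b_0+h_0\g_\nu<\b^\prime+h^\prime\g_\nu\le vc^\prime_{h_0}+h_0\g_\nu\), so \(vc_{h_0}+h_0\g_\nu=\b_0+h_0\g_\nu\). For \(i\neq h_0\), both summands of \(c_i\) contribute strictly more than \(\b_0+h_0\g_\nu\). Hence the minimum is uniquely attained at \(h_0\). By (\ref{eqn Type A beta(f) and h(f)}), \(h_0=h_K(X:f)\) and \(\b_0=\b_K(X:f)\), as required.

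\emph{Type B}, i.e.\ (\ref{eqn condn Type B}). All \(f_0\)-contributions satisfy \(vc^{(0)}_i+i\g_\nu\ge\b_0+h_0\g_\nu>\b^\prime+h^\prime\g_\nu\). Hence \(vc_{h^\prime}+h^\prime\g_\nu=\b^\prime+h^\prime\g_\nu\). Moreover \(vc_i+i\g_\nu\ge \b^\prime+h^\prime\g_\nu\) for every \(i\), with strict inequality for \(i>h^\prime\), since there both contributions are strict. So the minimum is \(\b^\prime+h^\prime\g_\nu\) and the largest index attaining it is \(h^\prime\). Then (\ref{eqn Type B beta(f) and h(f)}) gives \(h^\prime=h_K(X:f)\) and \(\b^\prime=\b_K(X:f)=vc_{h^\prime}\), which completes the argument.
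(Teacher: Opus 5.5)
Your proof is correct and follows essentially the paper's route: the same splitting $f=f_0+f'$ into Type A and Type B, with the $w_\nu$-dominant part of $f(X)-f(z_\nu)$ coming from $f_0$ or from $f'$ respectively. The difference lies in the final step. You compare Taylor coefficients directly through Corollary \ref{Coro j(f) when deg(Q)=1}, reading off $w_\nu(f'(X)-f'(z_\nu))=\beta'+h'\gamma_\nu$ from Theorem \ref{Thm j-invariant characterization} itself. The paper instead uses the Remark computing $\beta'$ and $h'$, the initial-form invariance of Proposition \ref{Prop in(f)=in(g) implies j(f)=j(g)}, and the observation that the degree-$h'$ coefficient comes only from $f'$. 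This makes your version slightly more self-contained. It also treats Type A and the degenerate case $f_0\in K$ explicitly, both of which the paper leaves implicit.
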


\begin{proof}
	We only consider the case when \(f\) is of Type B, since the complementary case is analogous. Fix \(\nu_0\) satisfying the conclusion of the preceding discussion, and take \( \nu>\nu_0 \). By Remark \ref{Rmk beta` and h`}, we have
	\begin{equation}\label{eqn key comparison Type A}
		\begin{aligned}
			w_\nu\left( f^\prime(X) - f^\prime(z_\nu) \right) &= w\left(  f^\prime(X) - f^\prime(z_\nu)   \right)\\
			& < w\left( f_0(X) - f_0 (z_\nu)  \right)\\
			& = w_\nu  \left( f_0(X) - f_0 (z_\nu)  \right),
		\end{aligned} 
	\end{equation}
	where the final equality follows exactly as in the proof of Proposition \ref{Prop h(f) = j_{w_nu} deg f < deg C}. Hence Proposition \ref{Prop in(f)=in(g) implies j(f)=j(g)} yields 
	\[  j_{w_\nu} \left( f(X) - f(z_\nu)  \right) = j_{w_\nu} \left( f^\prime(X) - f^\prime(z_\nu)  \right).  \]
	In light of Theorem \ref{Thm j-invariant characterization} and (\ref{eqn Type B beta(f) and h(f)}), we have
	\begin{equation}\label{eqn j(f-f(z))=h_0}
		j_{w_\nu} \left(  f(X) - f(z_\nu) \right) = h_K(X:f^\prime)=h^\prime = h_K(X:f).
	\end{equation}
	Thus we have proved the first assertion of Theorem \ref{Thm j-invariant characterization}. Now consider the Taylor expansions
	\begin{align*}
		f_0(X) - f_0(z_\nu) &= \sum_{i=1}^{\deg f_0} a_i (X-z_\nu)^i,\\
		f^\prime(X) - f^\prime(z_\nu) &= \sum_{i=1}^{n} b_i (X-z_\nu)^i.
	\end{align*}
	Then
	\[ f(X)-f(z_\nu) = \sum_{i=1}^{n} c_i (X-z_\nu)^i,  \]
	where 
	\[  c_i = \begin{cases}
		 a_i + b_i &\text{ when } i\leq \deg f_0,\\
		 b_i & \text{ when } i>\deg f_0.
	\end{cases}  \]
	In particular, we have
	\[ c_{h^\prime} = b_{h^\prime}.  \]
	As a consequence, we conclude from (\ref{eqn Type B beta(f) and h(f)}) and Theorem \ref{Thm j-invariant characterization} that
	\[ \b_K(X:f) = \b_K(X:f^\prime) = v b_{h^\prime} = vc_{h^\prime}.   \]
\end{proof}

\parm We combine Theorem \ref{Thm j-invariant characterization} and Propositions \ref{Prop h(f) = j_{w_nu} deg f < deg C} and \ref{Prop h(f) = j_{w_nu}(f) deg f > deg C} in the following result:

\begin{Theorem}\label{Thm central j-invariant characterization}
	Assume that \( (K,v) \) is henselian and let \( f \in K[X]\setminus K \) be arbitrary. Then the conclusions of Theorem \ref{Thm j-invariant characterization} hold true. 
\end{Theorem}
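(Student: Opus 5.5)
The proof will be a case split on the position of the roots of \(f\) relative to \(\Lim\C\) and on the size of \(\deg f\) compared with \(\deg\C\). Standing assumptions: \((K,v)\) is henselian and \(\C\) is of algebraic type with associated minimal polynomial \(Q\) of degree \(d=\deg\C\). The transcendental type case is covered by the Remark following Proposition \ref{Prop h(f) = j_{w_nu} deg f < deg C}.

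\emph{Case \(\Z(f)\sect\Lim\C\neq\emptyset\).} Theorem \ref{Thm j-invariant characterization} applies verbatim. It gives an ordinal \(\nu_0\) with \(h_K(X:f)=j_{w_\nu}(f(X)-f(z_\nu))\) for all \(\nu>\nu_0\), and \(\b_K(X:f)=vc_h\). This case needs no henselian hypothesis.

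\emph{Case \(\Z(f)\sect\Lim\C=\emptyset\).} Split according to \(\deg f\):
\begin{enumerate}[label=(\roman*)]
\item If \(\deg f\le d\), invoke Proposition \ref{Prop h(f) = j_{w_nu} deg f < deg C}.
\item If \(\deg f> d\), invoke Proposition \ref{Prop h(f) = j_{w_nu}(f) deg f > deg C}. This is where henselianity enters: it guarantees \(h_K(X:Q)=d\) via Corollary \ref{Coro h(Q) = deg(Q) iff Q irr} and the subsequent Remark, so that the dichotomy Type A / Type B is available.
\end{enumerate}

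The one point requiring care is that Proposition \ref{Prop h(f) = j_{w_nu}(f) deg f > deg C} was proved only in the Type B case. I would therefore write out the Type A case explicitly, by the symmetric argument. By (\ref{eqn f Type A h_0}), for \(\nu>\nu_0\),
\[
w_\nu\bigl(f_0(X)-f_0(z_\nu)\bigr)=w\bigl(f_0(X)-f_0(z_\nu)\bigr)=\b_0+h_0\g_\nu<\b'+h'\g_\nu .
\]
The first equality comes from the proof of Proposition \ref{Prop h(f) = j_{w_nu} deg f < deg C}, applied to \(f_0\), which satisfies \(\deg f_0<d\) and has no roots in \(\Lim\C\). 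The inequality comes from (\ref{eqn condn Type A}). Remark \ref{Rmk beta` and h`} gives \(w_\nu(f'(X)-f'(z_\nu))=\b'+h'\g_\nu\). Hence \(f(X)-f(z_\nu)\) and \(f_0(X)-f_0(z_\nu)\) have the same \(w_\nu\)-initial part, and Proposition \ref{Prop in(f)=in(g) implies j(f)=j(g)} yields
\[
j_{w_\nu}\bigl(f(X)-f(z_\nu)\bigr)=j_{w_\nu}\bigl(f_0(X)-f_0(z_\nu)\bigr)=h_0=h_K(X:f),
\]
using (\ref{eqn Type A beta(f) and h(f)}).

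For the constant, write \(c_i=a_i+b_i\), where the \(a_i\) and \(b_i\) are the Taylor coefficients of \(f_0\) and \(f'\). We need \(vc_{h_0}=va_{h_0}\), so we must rule out cancellation. We have \(w_\nu(b_{h_0}(X-z_\nu)^{h_0})\ge w_\nu(f'(X)-f'(z_\nu))>va_{h_0}+h_0\g_\nu\), hence \(vb_{h_0}>va_{h_0}\) and \(vc_{h_0}=va_{h_0}=\b_K(X:f_0)=\b_K(X:f)\).

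Finally, the second equality in the displayed formula for \(h_K(X:f)\), expressing the \(j\)-invariant as the largest index attaining the minimum, is exactly Corollary \ref{Coro j(f) when deg(Q)=1} applied to \(w_\nu\) with the minimal pair \((z_\nu,\g_\nu)\) from (\ref{eqn mpd for w_nu}). The only non-bookkeeping step is the Type A verification above, and within it the non-cancellation of \(c_{h_0}\).
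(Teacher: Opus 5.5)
Your proposal is correct and follows the paper's own argument: the paper proves this theorem by the same case split, combining Theorem \ref{Thm j-invariant characterization} (when \(\Z(f)\sect\Lim\C\neq\emptyset\)) with the two propositions of Section \ref{Section Z(f) has no limits} (for \(\deg f\le\deg\C\) and \(\deg f>\deg\C\)). Your explicit Type A argument fills in what the paper only calls ``analogous'', and it does so correctly. In particular, you check that \(vb_{h_0}>va_{h_0}\), so the coefficient \(c_{h_0}\) cannot cancel; this check is genuinely needed in Type A, whereas in Type B it is automatic because \(h'\ge d>\deg f_0\).
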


\begin{Proposition}\label{Prop h = p^e q}
	Assume that \( (K,v) \) is henselian and let \( f\in K[X] \) satisfy \( h_K(X:f) > 1 \). Then 
	\[ h_K(X:f) = p^e q,  \]
	where \( \gcd(p,q)=1 \) and \( e\in\ZZ_{\geq 1} \).
\end{Proposition}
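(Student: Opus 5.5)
The claim amounts to showing that $p$ divides $h_K(X:f)$ whenever $h_K(X:f)>1$. I would proceed by the same case division used to establish the existence of $h_K(X:f)$, and in each case reduce to one of two sources of $p$-divisibility. The first is that $d=\deg\C$ is a nontrivial power of $p$. The second is Kaplansky's classical lemma that the dominant Taylor index is a power of $p$ when the derivative values are ultimately stable.

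\textbf{Step 1: $d$ is a nontrivial $p$-power.} Since $(K,v)$ is henselian, Corollary \ref{Coro h(Q) = deg(Q) iff Q irr} and (\ref{eqn h(f) = j_w*(f)}) show that all roots of $Q$ lie in $\Lim\C$. By \cite{Kaplansky1942}, a root $a$ of $Q$ generates an immediate extension $(K(a)|K,v)$. Over a henselian field, the Lemma of Ostrowski then gives $[K(a):K]=d=p^{m}$. Moreover $d>1$, since otherwise $\C$ would have a limit in $K$.

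\textbf{Step 2: roots in $\Lim\C$ or Type B.} If $\Z(f)\sect\Lim\C\neq\emptyset$, then (\ref{eqn h(Q)| h(f)}) gives $d=h_K(X:Q)\mid h_K(X:f)$, so $p\mid h_K(X:f)$ by Step 1. If instead $\Z(f)\sect\Lim\C=\emptyset$ and $\deg f>d$ with $f$ of Type B, then (\ref{eqn Type B beta(f) and h(f)}) and Remark \ref{Rmk beta` and h`} give $h_K(X:f)=h^\prime=hd$, which is again divisible by $p$. The case $\deg f=d$ with no limit roots fits the same scheme. Write $f=cQ+f_0$ with $\deg f_0<d$ and run the Type A/B dichotomy verbatim: the $Q$-divisible part $cQ$ has relative approximation degree $d$ by Theorem \ref{Thm h(f) = j_w*(f) when f has limits}. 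So the Type B outcome gives degree $d$, and the Type A outcome falls under Step 3.

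\textbf{Step 3: polynomials of degree $<d$.} It remains to treat polynomials $g$ of degree $<d$: the polynomial $f$ itself, or $f_0$ in the Type A situations, where (\ref{eqn Type A beta(f) and h(f)}) gives $h_K(X:f)=h_K(X:f_0)$. For such $g$, every Hasse--Schmidt derivative $g_i$ has degree $<d$. Hence the values $vg_i(z_\nu)$ are ultimately constant, say $\b_i$, exactly as in the proof of Proposition \ref{Prop h(f) = j_{w_nu} deg f < deg C}, and $h=h_K(X:g)$ is the unique index with $\b_h+h\g_\nu<\b_i+i\g_\nu$ for all $i\neq h$ and all large $\nu$. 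I would now reproduce Kaplansky's argument (\cite[Lemma 8]{Kaplansky1942}). Suppose $h=p^e q$ with $q>1$ coprime to $p$, and put $k=p^e$. Expanding $g_{k}$ at $z_\nu$ gives
\[
g_k(z_\mu)-g_k(z_\nu)=\sum_{s\ge1}\tbinom{k+s}{k}g_{k+s}(z_\nu)(z_\mu-z_\nu)^s,
\]
and the coefficient $\binom{h}{k}$ is nonzero mod $p$ by Lucas' theorem. Comparing with the dominant-term inequalities, the term $s=h-k$ strictly dominates. This forces $v(g_k(z_\mu)-g_k(z_\nu))=\b_h+(h-k)\g_\nu<\b_k$, which contradicts the stability $vg_k(z_\mu)=vg_k(z_\nu)=\b_k$. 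Hence $q=1$, so $h=p^e$, and $h>1$ forces $e\ge1$. In this case $h$ is in fact a $p$-power.

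\textbf{Main obstacle.} The delicate point is the domination estimate in Step 3: one must check that for $s\neq h-k$ the terms $\b_{k+s}+s\g_\nu$ are strictly larger, uniformly for large $\nu$. This follows by subtracting $k\g_\nu$ from the defining inequalities for $h$, provided $\nu$ is chosen beyond the stabilization indices of all $g_i$. I expect this to be routine but fiddly, and it is the only place where the algebraic-type hypothesis must be sidestepped, via the degree bound $\deg g<d$.
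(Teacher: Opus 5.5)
Your proposal is correct and follows essentially the paper's proof. You use the same case split (a root of $f$ in $\Lim\C$, Type B, and Type A reducing to $\deg f_0<\deg\C$), and you re-derive directly, via Ostrowski with \cite[Theorem 3]{Kaplansky1942} and Kaplansky's Lucas-theorem argument, the two facts the paper simply cites from \cite[Proposition 7.4]{KuhlmannVlahu2014}: that $\deg Q$ is a nontrivial $p$-power, and that $h_K(X:g)$ is a $p$-power when $\deg g<\deg\C$. You also explicitly treat the case $\deg f\le\deg\C$ with $\Z(f)\sect\Lim\C=\emptyset$, which the paper's written proof leaves implicit.
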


\begin{proof}
	Since \( (K,v) \) is henselian, Corollary \ref{Coro h(Q) = deg(Q) iff Q irr} yields \( \deg Q = h_K(X:Q) \). Moreover, \cite[Proposition 7.4]{KuhlmannVlahu2014} shows that \( \deg Q \) is a \( p\)-power. Hence
	\[ h_K(X:Q) = p^m \text{ for some } m\in\ZZ_{\geq 1}.  \]
	If \( \Z(f) \) contains a limit of \(\C\), then the conclusion follows from (\ref{eqn h(Q)| h(f)}). Likewise, if \(f\) is of Type B, the assertion follows from (\ref{eqn Type B beta(f) and h(f)}) and Remark \ref{Rmk beta` and h`}. Finally, if \(f\) is of Type A, then \( h_K(X:f) = h_K(X:f_0) \) by (\ref{eqn Type A beta(f) and h(f)}). Since \( \deg f_0 < \deg\C \), \cite[Proposition 7.4]{KuhlmannVlahu2014} implies that \( h_K(X:f_0) \) is a power of \(p\), proving the proposition.
\end{proof}


\section{Proof of Theorem \ref{Thm K(X)^h:K(f)^h leq h}}\label{Section K(X)^h:K(f)^h leq h}

We will prove the following general result of which Theorem \ref{Thm K(X)^h:K(f)^h leq h} is an immediate consequence in light of Theorem \ref{Thm central j-invariant characterization}.  

\begin{Proposition}
	Assume that \((K,v)\) is a henselian valued field and take \(f\in K[X]\setminus K\). Then
	\[  [ K(X)^h : K(f(X))^h ] \le j_{w_\nu} \left( f(X) - f(z_\nu) \right) \text{ for all } \nu<\l.  \]
\end{Proposition}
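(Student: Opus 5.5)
The plan is to realize \(X\) as a root of a polynomial over \(K(f(X))\), and to bound the number of its conjugates over \(K(f(X))^h\) by a Newton polygon count of the roots close to \(z_\nu\). Fix \(\nu<\l\) and write the Taylor expansion
\[ f(X)-f(z_\nu)=\sum_{i=1}^{n}c_i(X-z_\nu)^i, \qquad c_i\in K. \]
Since \(z_\nu\in K\) and \((z_\nu,\g_\nu)\) is a minimal pair of definition for \(w_\nu\) by (\ref{eqn mpd for w_nu}), Corollary \ref{Coro j(f) when deg(Q)=1} gives two facts. First, \(w_\nu(f(X)-f(z_\nu))=\min_i\{vc_i+i\g_\nu\}\). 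Second, \(h:=j_{w_\nu}(f(X)-f(z_\nu))\) is the largest index attaining this minimum. Note also that \(w_\nu\le w\) on \(K[X]\), because \(w(X-z_\nu)=\g_\nu\). Hence
\[ w\left(f(X)-f(z_\nu)\right)\ \ge\ vc_h+h\g_\nu . \]

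Next I consider the polynomial \(g(T):=f(T)-f(X)\) over the field \(K(f(X))\), of which \(X\) is a root. Its expansion around \(z_\nu\) is
\[ g(T)=\left(f(z_\nu)-f(X)\right)+\sum_{i=1}^{n}c_i(T-z_\nu)^i . \]
All coefficients in degrees \(i\ge1\) coincide with those of \(f(T)-f(z_\nu)\); only the constant term differs. Work in \(\overline{K(X)}\) with the fixed extension of \(w\). By the Newton polygon of \(g(T+z_\nu)\), the number of roots \(Y\) of \(g\), counted with multiplicity, satisfying \(w(Y-z_\nu)\ge\g_\nu\) equals the largest index \(i\ge0\) at which \(w(\text{coef}_i)+i\g_\nu\) attains its minimum. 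By the inequality above, the constant term contributes a value \(\ge vc_h+h\g_\nu\), so the minimum is still \(vc_h+h\g_\nu\). The largest index attaining it is still \(h\), since the constant term has index \(0<h\). Thus exactly \(h\) roots of \(g\) (with multiplicity) lie in the closed ball of radius \(\g_\nu\) around \(z_\nu\).

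Finally, let \(\mu\) be the minimal polynomial of \(X\) over \(K(f(X))^h\). It divides \(g\), and its roots are the images \(\s X\) for \(\s\) in the absolute Galois group of \(K(f(X))^h\). Since \(K(f(X))^h\) is henselian, every such \(\s\) preserves \(w\). As \(z_\nu\in K\) is fixed by \(\s\), we get
\[ w(\s X-z_\nu)=w(X-z_\nu)=\g_\nu . \]
Hence every root of \(\mu\), with multiplicity, is among the \(h\) roots of \(g\) counted above, so \(\deg\mu\le h\). Since \(K(X)^h=K(f(X))^h(X)\), this yields
\[ [K(X)^h:K(f(X))^h]=\deg\mu\le h=j_{w_\nu}\left(f(X)-f(z_\nu)\right). \]
Multiplicities are handled automatically if \(X\) is inseparable over \(K(f(X))^h\), since \(\mu\) divides \(g\) with multiplicity.

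The main obstacle is the Newton polygon step. It must be set up over a valued field containing \(f(X)\) and the \(c_i\); I plan to work in the algebraic closure of \(K(X)\) with the fixed extension of \(w\). The point requiring care is that the count of roots with \(w(Y-z_\nu)\ge\g_\nu\) equals the maximal index attaining the minimum, including when the minimum is attained by the constant term as well. This is the standard slope-counting argument, and the same fact underlies Corollary \ref{Coro j(f) when deg(Q)=1}.
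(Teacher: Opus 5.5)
Your proof is correct, but it uses a different key lemma from the paper's to exploit henselianity.

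The paper first rescales. It chooses \(b,d\in K\) with \(vd=\g_\nu\) and \(v(bc_hd^h)=0\), sets \(\tilde X=(X-z_\nu)/d\), and considers \(\sum_i bc_id^iT^i-b\left(f(X)-f(z_\nu)\right)\in\mathcal{O}_{K(f)}[T]\). Its residue polynomial has degree exactly \(h\); the strict inequality for \(i>h\) from Corollary \ref{Coro j(f) when deg(Q)=1} enters here, just as it does in your choice of the largest index. The Strong Hensel's Lemma over \(K(f)^h\) then splits off a factor \(g_1\) of degree \(h\) whose cofactor has residue \(1\). Since \(\tilde X\) has value \(0\), it must be a root of \(g_1\).

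You instead stay in \(\overline{K(X)}\). A Newton polygon count shows that \(f(T)-f(X)\) has exactly \(h\) roots in the closed ball \(w(T-z_\nu)\ge\g_\nu\). Uniqueness of the extension of \(w\) from the henselian field \(K(f(X))^h\) then keeps every conjugate of \(X\) in that ball. These are the two standard faces of henselianity, and the roots of the paper's \(g_1\) are exactly the elements \((Y-z_\nu)/d\) for the \(h\) roots \(Y\) that you count.

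The paper's version stays inside \(K(f)^h[T]\) and cites a textbook lemma. Yours avoids the normalization and the residue reduction, and it makes the meaning of the bound transparent: \(h\) counts the only candidates for conjugates of \(X\) near \(z_\nu\).

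The one outside fact you rely on is the root count for a polynomial with coefficients in \(K(f(X))\) rather than \(K\). It follows by applying Corollary \ref{Coro j(f) when deg(Q)=1} over the base field \((K(f(X)),w)\) with center \(z_\nu\in K\), or directly from the multiplicativity of initial forms under \(v_{z_\nu,\g_\nu}\).

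Incidentally, neither argument uses that \(K\) itself is henselian. That hypothesis only matters when \(j_{w_\nu}(f(X)-f(z_\nu))\) is identified with \(h_K(X:f)\).
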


The following argument follows the general strategy of \cite[Theorem 9.1]{KuhlmannVlahu2014}.

\begin{proof}
	Take any \(\nu<\l\) and expand 
	\[ f(X) - f(z_\nu) = \sum_{i=1}^{n} c_i(X-z_\nu)^i.   \]
	Set \( h:= j_{w_\nu} \left( f(X) - f(z_\nu) \right) \). Corollary \ref{Coro j(f) when deg(Q)=1} yields that
	\[  w_\nu \left( f(X) - f(z_\nu)  \right) = w_\nu \left( c_h (X-z_\nu)^h \right) \leq w_\nu \left( c_i(X-z_\nu)^i   \right)   \]  
	for all \( i \neq h \); moreover, the inequality is strict for \( i>h \). Since 
	\[ w_\nu \left( c_i(X-z_\nu)^i \right) = vc_i + i\g_\nu = w \left( c_i(X-z_\nu)^i \right) \text{ for all } i,  \] 
	we have
	\[ w\left( c_h (X-z_\nu)^h \right) \leq w\left( c_i(X-z_\nu)^i \right) \text{ for all } i\neq h,  \]
	and 
	\[ w\left( f(X) - f(z_\nu) \right) \geq w_\nu \left( f(X) - f(z_\nu) \right) = w\left( c_h (X-z_\nu)^h \right).  \]
	Since \( (K(X)|K,w) \) is an immediate extension, we can take \( b,d\in K \) such that
	\[  vd = \g_\nu \text{ and } v( bc_hd^h)=0.  \]
	Set
	\[ \tilde{X}:= \dfrac{X-z_\nu}{d}.  \]
	The preceding inequalities therefore imply
	\begin{equation}\label{eqn [K(X)^h : K(f)^h] leq h}
		\begin{aligned}
			w \bigl( b(f(X) - f(z_\nu) )\bigr) &\geq w \left( bc_hd^h \tilde{X}^h \right) = 0 \\
			&  \leq w \left( bc_i d^i \tilde{X}^i   \right)  \text{ for all } i\neq h.
		\end{aligned}
	\end{equation}		
	Consider the polynomial 
	\[ g(T):= \sum_{i=1}^{n} bc_id^i T^i - b\left( f(X) - f(z_\nu)  \right) \in \mathcal{O}_{K(f)}[T].  \]
	Since the second inequality in (\ref{eqn [K(X)^h : K(f)^h] leq h}) is strict for \( i>h \), we obtain that
	\[  g(T)w = \sum_{i=0}^{h} bc_id^iv T^i - b\left( f(X)-f(z_\nu) \right)w \]
	is a polynomial of degree \(h\). Using the Strong Hensel's Lemma, that is, property 3) of \cite[Theorem 4.1.3]{EnglerPrestel2005}, we have a decomposition 
	\[ g(T) = g_1 (T) g_2 (T)  \]
	over \( K(f)^h \) with 
	\[ \begin{aligned}
		\deg g_1(T) = \deg g_1(T) w &= \deg g(T)w = h,\\
		g_2(T)w &= 1 . 
	\end{aligned}   \]
	Since \( g_2(T)w=1 \), the polynomial \( g_2(T) \) cannot admit a root of value \(0\). In particular, \( \tilde{X} \) cannot be a root of \( g_2 (T)\). On the other hand, \( \tilde{X} \) is a root of \( g(T) \). We have thus arrived at
	\[ g_1(\tilde{X}) = 0.  \]
	On observing that \( K(X) = K(\tilde{X}) \), we conclude
	\[ [K(X)^h:K(f)^h] \leq \deg g_1(T) = h,  \]
	thereby completing the proof. 
	\end{proof}

\begin{Corollary}\label{Coro h(f)=1}
	Let \( (K,v) \) be a henselian valued field and \( f(X)\in K[X] \). Assume that
	\[ j_{w_\nu} \left( f(X) - f(z_\nu) \right) = 1 \]
	for some \( \nu < \l \). Then
	\[ K(X)^h = K(f(X))^h.   \]
\end{Corollary}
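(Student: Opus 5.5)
This corollary follows directly from the preceding Proposition. Fix the index \(\nu<\l\) for which \(j_{w_\nu}\left(f(X)-f(z_\nu)\right)=1\). The Proposition, applied at this \(\nu\), gives
\[ [K(X)^h:K(f(X))^h] \le j_{w_\nu}\left(f(X)-f(z_\nu)\right) = 1. \]

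We still have to check that this degree makes sense. The hypothesis forces \(f\notin K\): if \(f\) were constant, then \(f(X)-f(z_\nu)=0\), and the \(j\)-invariant would not be \(1\). So \(f(X)\) is transcendental over \(K\), and \(K(f(X))\subseteq K(X)\) is a finite extension. We equip \(K(f(X))\) with the restriction of \(w\) and choose its henselization inside \(K(X)^h\), that is, within the fixed extension of \(w\) to \(\overline{K(X)}\). By the universal property of henselizations, \(K(f(X))^h\subseteq K(X)^h\). Moreover, \(K(X)^h\) is generated over \(K(f(X))^h\) by \(X\), which is algebraic over \(K(f(X))\), so \([K(X)^h:K(f(X))^h]\) is a well-defined positive integer.

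It is at most \(1\), so \(K(X)^h=K(f(X))^h\).

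There is no real obstacle here. The only point needing care is that the two henselizations are taken compatibly inside the same algebraic closure with the fixed extension of \(w\), so that the inclusion \(K(f(X))^h\subseteq K(X)^h\) holds. This is already implicit in the proof of the Proposition, where \(g_1\) is a polynomial over \(K(f)^h\) having \(\tilde X\) as a root.
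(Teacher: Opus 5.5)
Your proposal is correct and matches the paper's (implicit) argument: the corollary is stated as an immediate consequence of the preceding Proposition, which, applied at the given \(\nu\), yields \([K(X)^h:K(f(X))^h]\le 1\). Your additional checks, that \(f\notin K\) and that both henselizations are taken inside the same fixed extension of \(w\), are harmless bookkeeping that the paper leaves unstated.
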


\section{Artin-Schreier reduction of relative approximation degree over perfect fields}\label{Section A-S reduction}

We begin with a few observations about the Artin-Schreier operator \( \P \). Define
\[ \P(K[X]) := \left\{ g^p - g \mid g\in K[X]  \right\}.  \]
Since the map \( g\longmapsto g^p - g \) is additive in characteristic \(p\), the set \( \P(K[X]) \) forms an additive subgroup of \( K[X] \). In particular, if
\[  f_i \equiv g_i \bmod \P(K[X])  \]
for \( i=1,2 \), then
\[ f_1 + f_2 \equiv \left( g_1 + g_2 \right) \bmod\P (K[X]).  \]

\begin{Proposition}\label{Prop h(X:F) leq p}
	Assume that \((K,v)\) is henselian, perfect and \( f\in K[X]\setminus K \). Then there exists \(F\in K[X]\) such that \(F\equiv f \bmod \P(K[X])\) such that
	\[ h_K(X:F) \le p. \]
\end{Proposition}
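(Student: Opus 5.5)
The plan is to produce $F$ by repeatedly replacing $p$-th powers by their $p$-th roots, which is legitimate because $K$ is perfect. I would then read off $h_K(X:F)$ from the Taylor expansion, using Theorem \ref{Thm central j-invariant characterization} together with Proposition \ref{Prop h = p^e q}.

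The basic move is the following. If $G=\sum_k b_kX^{pk}$, then $G=H^p$ with $H=\sum_k b_k^{1/p}X^k\in K[X]$, and so $G\equiv H \bmod \P(K[X])$. Moreover $G(X)-G(z)=(H(X)-H(z))^p$ for every $z\in K$. Hence the Taylor coefficients of $G$ at $z_\nu$ are exactly the $p$-th powers of those of $H$, placed at indices multiplied by $p$. This has two consequences. First, $w_\nu(G(X)-G(z_\nu))=p\,w_\nu(H(X)-H(z_\nu))$. Second, $j_{w_\nu}(G(X)-G(z_\nu))=p\,j_{w_\nu}(H(X)-H(z_\nu))$, by Corollary \ref{Coro j(f) when deg(Q)=1}. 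So passing from $G$ to $H$ divides the relative approximation degree by $p$, by Theorem \ref{Thm central j-invariant characterization}.

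I would not apply this to all of $f$ at once. Instead I would argue by strong induction on $h_K(X:f)$, and within that on $\deg f$. If $h:=h_K(X:f)\le p$ there is nothing to do. Otherwise $h=p^eq$ with $e\ge1$ by Proposition \ref{Prop h = p^e q}, and $h>p$.

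Now fix $\nu$ large and look at the Taylor expansion $f(X)-f(z_\nu)=\sum c_i(X-z_\nu)^i$, whose dominant index is $h$. Split $f=f_{(p)}+f_{\ast}$, where $f_{(p)}$ collects the monomials in $X$ of degree divisible by $p$. Replace $f_{(p)}$ by its $p$-th root $\tilde f$, and set $f^{(1)}:=\tilde f+f_\ast\equiv f \bmod \P(K[X])$. This strictly lowers the degree whenever $f_{(p)}$ is nonconstant. Iterating, I reach a representative $F$ all of whose nonconstant monomials have degree prime to $p$.

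It remains to show $h_K(X:F)\le p$ for such an $F$. The tool would be the Type A/Type B dichotomy of Section \ref{Section Z(f) has no limits}, applied to the additive pieces of $F$. In the Type B case $h$ is a multiple of $d=\deg\C$. In the Type A case $h=h_K(X:F_0)$, and Kuhlmann–Vlahu's analysis for $\deg<\deg\C$ applies.

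I expect the main obstacle to be exactly this last step: ruling out a dominant index $h=p^eq>p$ for a polynomial with no monomials of $p$-divisible degree. The Taylor coefficients $c_i=\sum_n a_n\binom{n}{i}z_\nu^{n-i}$ need not vanish for $p\mid i$, so a purely combinatorial (Lucas-type) argument does not suffice. What I would try first is a finer, $\nu$-uniform normalization. At a dominant index $h=pm$ with $m>1$, subtract from $F$ a suitable $p$-th power $P_\nu^p-P_\nu$, with $P_\nu$ chosen from the dominant block of the $w_\nu$-initial form, which is a $p$-th power in the graded ring because $K$ is perfect. Then I would use that the index $h$ is eventually constant in $\nu$ (Theorem \ref{Thm central j-invariant characterization}), so that a single correction works for all large $\nu$. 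The target is that the new dominant index is at most $\max\{m,p\}$, after which the induction on $h$ closes the argument.
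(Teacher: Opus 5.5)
Your proposal has a genuine gap. The reduction to a representative with no nonconstant monomial of $p$-divisible degree does not help, and the claim you leave open, that such an $F$ has $h_K(X:F)\le p$, is false. Use the field and sequence of Section~\ref{Sect example}, where $z_n=\sum_{k=1}^n t^{-1/p^k}$ and $\gamma_n=-1/p^{n+1}$, and take
\[ F(X)=X^{p^2+1}-(t^{-p}+t^{-1})X . \]
We have $t^{-p}+t^{-1}=z_2^{p^2}$ and $(z_n-z_2)^{p^2}=z_{n-2}$. So for $n\ge 3$,
\[ F(X)-F(z_n)=z_{n-2}(X-z_n)+z_n(X-z_n)^{p^2}+(X-z_n)^{p^2+1}. \]
The three terms have $w$-values $-\tfrac1p+\gamma_n$, $-\tfrac1p+p^2\gamma_n$ and $(p^2+1)\gamma_n$. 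Since $-1/p<\gamma_n<0$, the middle value is strictly smallest. Hence $w(F(X)-F(z_n))=-\tfrac1p+p^2\gamma_n$ for all $n\ge3$, and $h_K(X:F)=p^2$ directly by Definition~\ref{Defn rel appr deg}. Your procedure returns this $F$ unchanged.

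Your fallback correction is not an argument either. Suppose $M:=vc_h+h\gamma_\nu<0$ and you trade $c_h(X-z_\nu)^h$ for $c_h^{1/p}(X-z_\nu)^{h/p}$. The new term has value $M/p>M$. So an index $i>h$, whose term was only known to have value $>M$, can become dominant. Nothing forces the index to drop, so the induction on $h$ does not close. The passage from one $\nu$ to all large $\nu$ is also only asserted.

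The missing idea is to overwhelm the dominant term rather than remove it, using a single linear Artin--Schreier correction. Fix a large $\nu$ and write $f(X)-f(z_\nu)=\sum_i c_i(X-z_\nu)^i$ with dominant index $h$. Put $F:=f-\mathcal{P}(d(X-z_\nu))$, with $d\in K$ chosen so that $v(c_1+d)+\gamma_\nu<vc_h+h\gamma_\nu$. This changes only $c_1\mapsto c_1+d$ and $c_p\mapsto c_p-d^p$.

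If $\beta+h\gamma_\nu>0$, choose $d$ so that also $v(c_1+d)+\gamma_\nu>0$. This is possible because $vK$ is $p$-divisible and so has no least positive element. Then the linear term is strictly dominant and $j_{w_\nu}(F(X)-F(z_\nu))=1$. If $\beta+h\gamma_\nu<0$, the term $-d^p(X-z_\nu)^p$ has value $p(vd+\gamma_\nu)<vd+\gamma_\nu$. It is then strictly dominant, so $j_{w_\nu}=p$.

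The second ingredient makes any $\nu$-uniform normalization unnecessary. For $\mu>\nu$, $F(X)-F(z_\mu)$ differs from $F(X)-F(z_\nu)$ only in the constant term. Moreover $j_{w_\mu}\le j_{w_\nu}$, since both valuations have $z_\mu$ as a common center and $\gamma_\mu>\gamma_\nu$. Hence, for $\mu$ large,
\[ h_K(X:F)=j_{w_\mu}(F(X)-F(z_\mu))\le j_{w_\nu}(F(X)-F(z_\mu))\le j_{w_\nu}(F(X)-F(z_\nu))\le p . \]
A bound at a single $\nu$ therefore suffices. No induction on $h$ and no $p$-th-root normalization of the higher monomials is needed.
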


\begin{proof}
	The assertion is trivial when \( h_K(X:f) = 1 \). So we assume that \( h:= h_K(X:f) > 1 \) and set \( \b:= \b_K(X:f) \). Then 
	\[ w\left( f(X)-f(z_\nu) \right) = \b + h\g_\nu  \]
	for all \( \nu \) sufficiently large. Since \( \{\g_\nu\} \) is strictly increasing and \( h>0 \), the sequence \( \{ \b+h\g_\nu \} \) is strictly increasing as well. Hence exactly one of the following holds: either
	\begin{equation}\label{eqn beta + h gamma > 0}
		\b+ h\g_\nu > 0 \text{ for all } \nu \text{ sufficiently large}, 
	\end{equation}
	or
	\begin{equation}\label{eqn beta + h gamma < 0}
		\b+ h\g_\nu < 0 \text{ for all } \nu.
	\end{equation}
	Choose \( \nu_0 \) sufficiently large so that the conclusions of Theorem \ref{Thm central j-invariant characterization} apply, and fix \( \nu>\nu_0 \) accordingly. Consider the expansion
	\[ f(X) = f(z_\nu) + \sum_{i=1}^{n} c_i (X-z_\nu)^i.  \]
	Take \(d\in K\) such that
	\[ v(c_1 + d) +\g_\nu < vc_h + h\g_\nu,  \]
	and set 
	\[ F(X) := f(X) - \P(d(X-z_\nu) ).  \]
	Then
	\[  F(X) = f(z_\nu) + (c_1 + d)(X-z_\nu) + (c_p - d^p) (X-z_\nu)^p + \sum_{i\neq 1,p} c_i (X-z_\nu)^i.  \]
	The choice of such a \(d\) is straightforward in the setup of (\ref{eqn beta + h gamma < 0}). If (\ref{eqn beta + h gamma > 0}) holds, then since \((K,v)\) is perfect, the value group \(vK\) is \(p\)-divisible, and hence does not have a smallest positive element. Thus the choice of \(d\) is again validated. In either case, observe that
	\[ v(c_1+d)+\g_\nu < vc_h + h\g_\nu \le vc_1 + \g_\nu, \]
	whereby \( v(c_1+d) = vd \) by the triangle inequality. In the scenario of (\ref{eqn beta + h gamma > 0}), we further have
	\[ 0 < v(c_1 + d) + \g_\nu = vd+ \g_\nu \Longrightarrow v(c_1 + d) + \g_\nu < vd^p + p\g_\nu.  \]
	As a consequence, 
	\[  v(c_1 + d) + \g_\nu < \min\{ vc_p, vd^p \}+ p\g_\nu \leq v(c_p - d^p) + p\g_\nu.  \]
	It follows that
	\[  v(c_1+d) + \g_\nu < \min \left\{ v(c_p - d^p) + p\g_\nu, \, vc_h + h\g_\nu \right\} \le \min \left\{ v(c_p - d^p) + p\g_\nu, \, vc_i + i\g_\nu \mid i\ne 1,p \right\}, \]
	whereby 
	\[ j_{w_\nu} \left( F(X) - F(z_\nu) \right) = 1  \]
	by Corollary \ref{Coro j(f) when deg(Q)=1}. If (\ref{eqn beta + h gamma < 0}) holds, our choice of \(d\) ensures
	\[ p(vd + \g_\nu) < vd + \g_\nu = v(c_1 + d) + \g_\nu < vc_p + p\g_\nu,    \]
	whereby \( vd^p < vc_p \). Hence \( v(c_p - d^p) =  vd^p \) by the triangle inequality. It follows that
	\begin{align*}
		v\left( c_p - d^p \right) + p\g_\nu &= p(vd + \g_\nu) = p(v(c_1 + d) + \g_\nu)\\
		&  < v (c_1 + d) + \g_\nu < vc_h + h\g_\nu \le \min \left\{ vc_i + i\g_\nu \mid i\ne 1,p \right\}.
	\end{align*}
	Therefore Corollary \ref{Coro j(f) when deg(Q)=1} yields that
	\begin{equation*}
		j_{w_\nu} \left( F(X) - F(z_\nu) \right) = p.
	\end{equation*}
	
	\pars We have thus obtained
	\[ 1 \le j_{w_\nu} \left( F(X) - F(z_\nu)\right) \le p  \]
	in either case. By Theorem \ref{Thm central j-invariant characterization}, we can choose \( \mu > \nu \) such that 
	\[ h_K(X:F) = j_{w_\mu} \left( F(X) - F(z_\mu) \right).  \] 
	Write 
	\[ F(X) - F(z_\mu) = F(X) - F(z_\nu) + F(z_\nu) - F(z_\mu). \]
	Thus the higher-order monomials in \( X-z_\nu \) remain unchanged when we move from \( F(X) - F(z_\nu) \) to \( F(X) - F(z_\mu) \). Since \( (z_\mu, \g_\nu) \) is also a minimal pair of definition for \( w_\nu \) by Proposition \ref{Prop pair of defn}, we infer from Corollary \ref{Coro j(f) when deg(Q)=1} that 
	\[  j_{w_\nu} \left( F(X) - F(z_\mu) \right) \leq j_{w_\nu} \left( F(X) - F(z_\nu) \right) \le p. \]
	Moreover, \( j_{w_\nu} \left( F(X) - F(z_\mu) \right) \) cannot be zero since \( z_\mu \) is a root of \( F(X) - F(z_\mu) \) and \( v (z_\mu - z_\nu) = \g_\nu \). Consequently,
	\[ j_{w_\nu} \left( F(X) - F(z_\mu) \right) \ge 1. \]
	Finally, \( z_\mu \) being a common center for both \(w_\mu\) and \(w_\nu\), the fact \( \g_\mu > \g_\nu \) implies that \( j_{w_\mu} (-) \leq j_{w_\nu} (-) \). Since the relative approximation degree is a positive integer, we conclude that 
	\[ 1 \le h_K(X:F) = j_{w_\mu} \left( F(X) - F(z_\mu) \right) \le p.  \] 
\end{proof}

\begin{Remark}
	It follows from Proposition \ref{Prop h = p^e q} that
	\[  h_K(X:F) = 1 \text{ or } p. \]
	Moreover, \( h_K(X:F) = 1 \) whenever (\ref{eqn beta + h gamma > 0}) holds. However, the situation of (\ref{eqn beta + h gamma < 0}) poses the essential difficulty to achieving henselian rationality over perfect fields, as we will illustrate in the next section.
\end{Remark}

\section{An example}\label{Sect example}

Let \( (k,v) \) denote the field \(\FF_p(t)\) equipped with the \(t\)-adic valuation, where \(p\) is an odd prime. Fix an extension of \(v\) to \(\overline{k}\) and set 
\[  K := \left( k^h \right)^{1/p^\infty}.  \]
For \(n\in\ZZ_{>0}\), define 
\[ z_n := \sum_{k=1}^{n} t^{-1/p^k} \in K. \]
Then \(\C:= \{z_n\}\) is a pseudo-convergent sequence in \((K,v)\) with
\[ \g_n := v(z_n - z_{n+1}) = -\dfrac{1}{p^{n+1}}.  \]
Define 
\[  a := \sum_{k=1}^{\infty} t^{-1/p^k}. \]
Then \( a^p - a = \dfrac{1}{t} \) and hence \(a\) is algebraic over \(K\). Observe that
\[ v (a-z_n) = \g_n \text{ for all } n > 0.  \]
As a consequence, \( a \in \Lim\C \). Moreover, it has been observed in \cite[Example 3.12]{Kuhlmann2011Defect} that
\[ v(a-K) = \left( vK \right)_{<0}. \]
Since \( \{\g_n\} \) is cofinal in \(\left( vK \right)_{<0}\), this implies that \(\C\) has no limits in \(K\). Therefore, 
\[ \text{\(\C\) is a pseudo-convergent sequence in \((K,v)\) of algebraic type.}  \]
Finally, combining the Lemma of Ostrowski together with \cite[Theorem 3]{Kaplansky1942} yields that the degree of the associated minimal polynomial to an algebraic type pseudo-convergent sequence over a henselian valued field is a \(p\)-power. Consequently, we conclude that
\[ Q(X) := X^p - X - \dfrac{1}{t} \text{ is an associated minimal polynomial to } \C.   \]

\begin{Proposition}
	There exists an immediate extension \( (K(X)|K,w) \) such that \(X\in\Lim\C\).
\end{Proposition}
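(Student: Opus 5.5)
Kaplansky's construction for algebraic type sequences produces an immediate \emph{algebraic} extension, so we cannot simply cite it here. Instead, we will build \(w\) on \(K(X)\), with \(X\) transcendental, as the limit of the monomial valuations \(w_n = v_{z_n,\g_n}\). For \(f\in K[X]\) we set
\[ wf := \lim_{n\to\infty} w_n f. \]
The sequence \(\{w_n f\}_n\) is non-decreasing, since \(\g_n\) increases and each \((z_m,\g_n)\), \(m\ge n\), is a pair of definition for \(w_n\). We must check that it is eventually constant, so that \(wf\in vK\), and that the resulting map is a valuation.

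\textbf{Eventual stability.} Factor \(f = c\prod (X-b_i)\) over \(\overline{K}\). By the argument of Lemma \ref{Lemma w_nu f = vf(z_nu)}, for large \(n\) we have
\[ w_n f = vf(z_n) = vc + \sum_i \min\{v(z_n-b_i),\g_n\}. \]
If some root \(b_i\) is a limit of \(\C\), this value grows without bound along \(\g_n\), and the construction fails. The natural remedy is to use Kaplansky's \cite[Theorem 3]{Kaplansky1942} route with a \emph{transcendental} \(X\): take \(w\) to be the valuation \(v_{a,\g}\) on \(\overline{K}(X)\) restricted to \(K(X)\), where \(\g = \sup \g_n = 0^-\) lies in a cut of \(vK\). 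Concretely, take \(\g\) in an ordered group \(vK\oplus\ZZ\gamma'\) realizing the cut \(0^-\), with \(\g\) larger than every element of \((vK)_{<0}\) and smaller than \(0\). Then \(w(X-z_n) = \min\{v(a-z_n),\g\} = \g_n\), so \(X\in\Lim\C\). The problem is that \(wX\)-type values might leave \(vK\). So the real work is to show that \(w(K(X)) = vK\) and that the residue field does not grow.

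\textbf{Immediacy.} By Theorem \ref{Thm w = v_Q} with the minimal pair \((a,\g)\) for \(w^*\), each \(f\in K[X]\) satisfies
\[ wf = \min_i \{ vf_i(a) + i\,wQ \}, \]
where the \(f_i\) come from the \(Q\)-expansion of \(f\). The key step is to ensure that \(wQ\in vK\), and more specifically that the terms with \(i\ge1\) never attain the minimum in a way that produces a new value. Here we can exploit the specific shape \(Q = X^p - X - 1/t\). We have
\[ Q(X) = (X-a)^p - (X-a), \]
whose value under \(v_{a,\g}\) is \(\min\{p\g,\g\} = p\g\) since \(\g<0\). Now \(p\g\) is again in the cut \(0^-\), because \((vK)_{<0}\) is \(p\)-divisible. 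So \(wQ\notin vK\), which is bad. To avoid this, we choose instead \(\g := 0^-\) realized as an actual element \(\g\) with \(p\g = \g\)? That is impossible in an ordered group. So we replace the approach by defining \(w\) directly as the valuation induced by the pseudo-convergent sequence in Kaplansky's sense on polynomials of degree \(<p\), and then extending via the \(Q\)-adic ("key polynomial") construction. We set
\[ w\Big(\sum f_iQ^i\Big) := \min_i \{ vf_i(a) + i\delta \}, \]
where \(\delta\in vK\) is chosen with \(\delta > v(Q(z_n))\) for all \(n\). Note that \(vQ(z_n) = p\g_n\) increases to \(0\), so any \(\delta\ge 0\) works, and we take \(\delta = 0\). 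By MacLane/Vaquié theory, an augmented valuation \([\,w_{<p};\,Q\mapsto\delta\,]\) is a valuation as soon as \(Q\) is a key polynomial for the limit of the \(w_n\); this is exactly the limit key polynomial situation.

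\textbf{Expected obstacle.} The hardest part is immediacy of the residue field, since the value group is clearly \(vK\). The residue of \(Q/c\), for \(c\in K\) with \(vc = 0\), must be checked to be algebraic, indeed equal to an element of \(Kv\). To get this I would pick \(\delta\) generic, for instance \(\delta\) not attained in a way that makes \(Q\) a residually transcendental monomial. That fails for augmented valuations, which always increase the residue transcendence degree. So instead I would construct \(X\) as a limit of a transcendental-type sequence in a larger field: take \(X := a + s\), where \(s\) is transcendental, and \(w\) is chosen so that \(w(s) > 0\) sits just above every \(\g_n\) but is taken infinitesimally. Then apply Kaplansky's \cite[Theorem 2]{Kaplansky1942} to the transcendental-type sequence \(\{z_n\}\) viewed over \(K(a)\), checking immediacy of \(K(a,X)\) over \(K(a)\) and descending. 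I expect this descent, showing \(K(X)\) immediate over \(K\), to be the crux.
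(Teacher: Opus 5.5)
Your proposal does not yet contain a proof. You abandon the first three constructions yourself: the limit of the $w_n$ does not stabilize on $Q$; $v_{a,\g}$ with $\g$ in the cut $0^-$ gives $wQ=p\g\notin vK$; and the augmentation $Q\mapsto 0$ is residually transcendental. The fourth construction points in the right direction but fails as stated, for two reasons.

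First, over $K(a)$ the sequence $\{z_n\}$ is \emph{not} of transcendental type. It has the limit $a\in K(a)$, and the linear polynomial $X-a$ has strictly increasing values $v(z_n-a)=\g_n$. So Kaplansky's Theorem 2 does not apply to it. Second, an ``infinitesimal'' value $w(s)$, i.e.\ a new value just above $0$ not in $vK$, would by itself enlarge the value group.

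The missing idea is to continue $\C$ past $a$ by a sequence of transcendental type whose gaps lie in $vK$. The paper takes $\eta=\sum_k b_{i_k}t^{i_k}\in\FF_p((t))$ transcendental over $\FF_p(t)$, and sets $a_\omega=a$ and $a_{\omega+n}=a+\sum_{k\le n}b_{i_k}t^{i_k}$. This Cauchy sequence $\mathcal{A}$ has unique limit $a+\eta\notin\overline{K}$ in the completion, so it is of transcendental type over $\overline{K}$. Kaplansky's Theorem 2 is then applied over $\overline{K}$. Since $\C$ is an initial segment of $\mathcal{A}$, the resulting $X$ lies in $\Lim\C$.

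As for the descent you call the crux: the input you never mention is that $(K(a)|K,v)$ is immediate (Kuhlmann's example, cited in the paper). Without it nothing descends. With it, your $K(a)$-route makes the descent trivial, because $vK\subseteq wK(X)\subseteq wK(a,X)=vK(a)=vK$, and likewise for residue fields.

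Your idea can in fact be completed without any Kaplansky theorem. Put $X:=a+\eta\in\widehat{K(a)}$.
\begin{itemize}
\item $X$ is transcendental over $K$, because $\eta$ is transcendental over $\overline{\FF_p(t)}\supseteq K(a)$.
\item $X\in\Lim\C$, because $v\eta>0>\g_n$ gives $v(X-z_n)=v(a-z_n)=\g_n$.
\item $K(a,X)\subseteq\widehat{K(a)}$ is immediate over $K(a)$.
\end{itemize}
The paper builds $X$ over $\overline{K}$ instead, so it cannot argue by inclusion. It uses the Alexandru--Popescu--Zaharescu description $wK(X)=\bigcup_\nu vK(a_\nu)$, $K(X)w=\bigcup_\nu K(a_\nu)v$, justified by the minimal-pair degree bounds, together with $K(a_\nu)\in\{K,K(a)\}$ and the immediacy of $K(a)|K$.
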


\begin{proof}
	Take a formal power series \( \eta := \displaystyle\sum_{k=1}^{\infty} b_{i_k} t^{i_k} \in \FF_p((t)) \) which is not algebraic over \(\FF_p(t)\), where \(b_{i_k}\neq 0\) for all \(k\). Extend the sequence \(\C\) to the sequence
	\[ \mathcal{A} := \left\{ a_1, a_2, \cdots , a_\o, a_{\o+1}, a_{\o+2}, \cdots   \right\} \subset \overline{K},   \]
	where
	\begin{align*}
		a_\nu := \begin{cases}
			z_\nu &\quad \nu < \o,\\
			a &\quad \nu = \o,\\
			a + \displaystyle\sum_{k=1}^{n} b_{i_k} t^{i_k} &\quad \nu = \o + n.
		\end{cases}
	\end{align*}
	Then \(\mathcal{A}\) is a pseudo-convergent sequence by construction, with the corresponding successive differences 
	\begin{align*}
		\d_\nu := v(a_\nu - a_{\nu+1}) = 
		\begin{cases}
			\g_\nu &\quad \nu < \o,\\
			i_{n+1} &\quad \nu = \o + n. 
		\end{cases}
	\end{align*}
	Since the sequence \( \{\d_\nu\} \) approaches \(\infty\), \(\mathcal{A}\) is a Cauchy sequence and thus has a unique limit in the completion \(\widehat{\overline{K}}\). It is well-known that given an algebraic extension \(F'|F\) of valued fields, their completions satisfy \( \widehat{F} \subseteq \widehat{F'} \). Consequently, 
	\[ \FF_p((t)) = \widehat{k} \subseteq \widehat{\overline{k}} = \widehat{\overline{K}}.  \]
	From our construction of \(\mathcal{A}\), it follows that
	\[ a + \eta \in \Lim\mathcal{A}. \]
	Since \( \eta \in \FF_p((t)) \), we have that \(a+\eta\)  is the unique limit of \(\mathcal{A}\) in \(\widehat{\overline{K}}\). Moreover, \(\eta\) is not algebraic over \(\FF_p(t)\). Thus \(a+\eta \notin \overline{K}\). Therefore,
	\[ \mathcal{A} \text{ is of transcendental type in } (\overline{K},v). \] 
	 Applying \cite[Theorem 2]{Kaplansky1942}, we obtain an immediate extension \( (\overline{K}(X)|\overline{K},w) \) such that \(X\in\Lim\mathcal{A}\). Hence in particular, 
	 \[ X \in \Lim\C. \]
	 It remains to show that the extension \( (K(X)|K,w) \) is immediate. Take \( b\in\overline{K} \) such that \( w\left( X-b \right) = \d_\nu \). If \(\nu<\o\), then since \(a_\nu = z_\nu \in K\), we obtain
	 \[ \deg_K(b) \geq \deg_K(a_\nu). \]	 
	 If \( \nu\geq\o \), then \(b\) is also a limit of \(\C\). Since \(Q\) is the associated minimal polynomial to \(\C\), this again implies
	 \[ \deg_K(b) \geq \deg Q = \deg_K(a) = \deg_K(a_\nu).  \]
	 \cite[Theorem 5.1]{AlexandruPopescuZaharescu1990b} then implies
	 \[ wK(X) = \Union_{\nu} vK(a_\nu) \quad \text{and} \quad K(X)w = \Union_{\nu}K(a_\nu)v.   \]
	 Observe that \( K(a_\nu) = K \) for all \(\nu<\o\) and \( K(a_\nu) = K(a) \) otherwise. Furthermore, the extension \( (K(a)|K,v) \) is immediate, as observed in \cite[Theorem 3.12]{Kuhlmann2011Defect}. As a consequence, 
	 \[ wK(X) = \Union_{\nu} vK(a) = vK \quad \text{and} \quad K(X)w = \Union_{\nu}K(a)v = Kv.   \]
\end{proof}

\begin{Remark}
	The existence of such a transcendental formal power series \(\eta\) is validated by the fact that \( \trdeg [\FF_p((t)) : \FF_p(t)] = \infty \). Numerous such examples can be constructed using the language of non-automatic sequences and employing Christol's theorem. For a precise example, consider the sequence 
	\begin{align*}
		b_n := \begin{cases}
			1 &\quad n \text{ is a prime},\\
			0 &\quad \text{otherwise}.
		\end{cases}
	\end{align*}
	Then \(\{b_n\}_{n\geq 1}\) is not \(p\)-automatic by \cite[Example 25]{ALLOUCHE20221}, and hence 
	\[  \sum_{n=1}^{\infty} b_n t^n = \sum_{q \text{ prime}} t^q  \in \FF_p((t)) \]
	is not algebraic over \(\FF_p(t)\) by Christol's theorem \cite{BSMF_1980__108__401_0}. 
\end{Remark}

We will now work with the polynomial 
\begin{equation}\label{eqn f}
	f(X) := \left( t^{-1/p} - t^{-1} \right) X + bX^2 + X^{p+1}, \text{ where } b^p = -2^{1-p}.
\end{equation}
Then for any \( n\geq 1 \), we have
\[ f(X) - f(z_n) = \sum_{i = 1}^{p+1} c_i \left( X - z_n \right)^i,  \]
where
\begin{equation}\label{eqn c_i}
	c_i=
	\begin{cases}
		z_n^p + 2bz_n + t^{-1/p} - t^{-1} & \quad i=1,\\
		b &  \quad i=2,\\
		z_n & \quad i=p,\\
		1 & \quad i=p+1,\\
		0 & \quad \text{otherwise}.
	\end{cases}
\end{equation}
A direct computation yields
\[  c_1^p = Q(z_n)^p - Q(z_n) - z_n.  \]
Now,
\[ Q(z_n) = z_n^p - z_n - \dfrac{1}{t} = -t^{-1/p^n} \Longrightarrow vQ(z_n) = -\dfrac{1}{p^n}.  \] 
Moreover, \( vz_n = -1/p \). On applying the triangle inequality to the above expression, we then obtain
\[ vc_1 = -\dfrac{1}{p^2} \text{ whenever } n\geq 3.  \]
For each \(n\), define 
\[ w_n := v_{z_n, \g_n}.  \]
For any \(n\geq 3\), we then obtain
\[  w_n \left( c_p (X-z_n)^p \right) < w_n \left( c_1 (X-z_n) \right) < w_n \left( c_{p+1} (X-z_n)^{p+1} \right) < w_n \left( c_2 (X-z_n)^2  \right).  \]
Hence 
\[   j_{w_n} \left( f(X)-f(z_n) \right) = p   \]
by Corollary \ref{Coro j(f) when deg(Q)=1}. Theorem \ref{Thm central j-invariant characterization} then yields the following:

\begin{Proposition}
	\( h_K(X:f) = p. \)
\end{Proposition}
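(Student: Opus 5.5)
The plan is to read off $h_K(X:f)$ from Theorem \ref{Thm central j-invariant characterization}. That theorem gives some $\nu_0$ such that
\[ h_K(X:f) = j_{w_n}\bigl(f(X)-f(z_n)\bigr) \quad\text{for all } n>\nu_0 . \]
The preceding computation shows that $j_{w_n}(f(X)-f(z_n)) = p$ for every $n\geq 3$. Taking any $n>\max\{\nu_0,3\}$ then yields $h_K(X:f)=p$. The hypotheses are satisfied: $(K,v)$ is henselian, being the perfect hull of the henselian field $k^h$; $\C$ is of algebraic type with $X\in\Lim\C$; and $f\notin K$.

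The step that needs checking is the chain of strict inequalities among the Taylor monomial values. I would verify it by computing
\[ w_n\bigl(c_i(X-z_n)^i\bigr) = vc_i + i\g_n \qquad\text{with } \g_n=-1/p^{n+1}, \]
for each $i$:
\begin{align*}
i=p:&\quad -\tfrac1p - \tfrac{1}{p^{n}},\\
i=1:&\quad -\tfrac{1}{p^2} - \tfrac{1}{p^{n+1}},\\
i=p+1:&\quad -\tfrac{p+1}{p^{n+1}},\\
i=2:&\quad -\tfrac{2}{p^{n+1}}.
\end{align*}
Here $vb=0$, since $b^p=-2^{1-p}\in\FF_p^\times$.

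For $n\geq 3$ these values are strictly increasing in the listed order. In particular, the $i=p$ monomial is the unique minimum. Corollary \ref{Coro j(f) when deg(Q)=1}, applied with the minimal pair $(z_n,\g_n)$ for $w_n$, then gives $j_{w_n}=p$. All other $c_i$ vanish.

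The only real subtlety is $vc_1=-1/p^2$. This comes from the identity $c_1^p = Q(z_n)^p-Q(z_n)-z_n$ together with $vQ(z_n)=-1/p^n$ and $vz_n=-1/p$. The term $Q(z_n)^p$ has value $-1/p^{n-1}$, which exceeds $-1/p$ once $n\geq 3$. So $-z_n$ strictly dominates, $vc_1^p=-1/p$, and hence $vc_1=-1/p^2$.

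Once this is in place, the conclusion is immediate. As a byproduct, Theorem \ref{Thm central j-invariant characterization} also gives $\b_K(X:f)=vc_p=-1/p$.
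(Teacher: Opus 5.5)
Your proposal is correct and follows the paper's own argument. You compute the values $vc_i+i\g_n$ of the Taylor monomials, with $vc_1=-1/p^2$ obtained from $c_1^p=Q(z_n)^p-Q(z_n)-z_n$. This shows that for $n\ge 3$ the degree-$p$ monomial is the unique $w_n$-minimum, so $j_{w_n}(f(X)-f(z_n))=p$, and the central $j$-invariant characterization theorem then yields $h_K(X:f)=p$.
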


Our primary goal for this section is to prove the following result:

\begin{Theorem}
	Take a polynomial \( g(X) \in K[X] \). Then
	\[  h_K \left( X : f - g^p + g \right) \ge p. \]
\end{Theorem}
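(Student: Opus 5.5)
The plan is to argue by contradiction using the Taylor-coefficient description of Theorem \ref{Thm central j-invariant characterization}, together with the dichotomy $h_K(X:F)\in\{1\}\cup p\ZZ_{\ge 1}$ coming from Proposition \ref{Prop h = p^e q}. Set $F:=f-g^p+g$.

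\textbf{Step 1: reduction and setup.} By Proposition \ref{Prop h = p^e q}, either $h_K(X:F)=1$ or $h_K(X:F)$ is divisible by $p$. So it suffices to rule out $h_K(X:F)=1$. Suppose, for contradiction, that $h_K(X:F)=1$. By Theorem \ref{Thm central j-invariant characterization}, for all large $n$ we then have $j_{w_n}(F(X)-F(z_n))=1$.

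\textbf{Step 2: Taylor coefficients of $F$.} Write $g(X)-g(z_n)=\sum_i d_i(X-z_n)^i$, where $d_i=d_i(n)$ are Hasse--Schmidt derivatives of $g$ at $z_n$. In characteristic $p$,
\[ g(X)^p-g(z_n)^p=\sum_i d_i^p (X-z_n)^{ip}. \]
Hence the coefficients $e_i$ of $F(X)-F(z_n)$ are
\[ e_i=c_i+d_i \text{ for } p\nmid i, \qquad e_{ip}=c_{ip}+d_{ip}-d_i^p, \]
with $c_i$ as in (\ref{eqn c_i}). The two relevant ones are
\[ e_1=c_1+d_1, \qquad e_p=z_n+d_p-d_1^p. \]
By Corollary \ref{Coro j(f) when deg(Q)=1}, $j_{w_n}=1$ says that $v e_1+\g_n<v e_i+i\g_n$ for all $i>1$; in particular $v e_1+\g_n<v e_p+p\g_n$.

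\textbf{Step 3: the key identity.} From $c_1^p=Q(z_n)^p-Q(z_n)-z_n$ we get
\[ e_p+e_1^p=d_p+Q(z_n)^p-Q(z_n), \]
where $vQ(z_n)=-1/p^n$ and $vQ(z_n)^p=-1/p^{n-1}$. I then split into two cases.

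\emph{Case $v e_1+\g_n<0$.} Then $p(v e_1+\g_n)<v e_1+\g_n<v e_p+p\g_n$, so $v e_1^p<v e_p$. The identity then forces
\[ v\bigl(d_p+Q(z_n)^p-Q(z_n)\bigr)=p\,v e_1. \]

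\emph{Case $v e_1+\g_n\ge 0$.} Then $v e_p>-p\g_n+v e_1-\ldots\ge -(p-1)\g_n$, which tends to $0$, and $v e_1^p\ge -p\g_n\to 0$. So the identity gives that $v\bigl(d_p+Q(z_n)^p-Q(z_n)\bigr)$ is bounded below by a quantity tending to $0$.

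\textbf{Step 4: contradiction from the $n$-dependence (main obstacle).} I expect this step to be the hardest. The plan is to control how $v d_1$, $v d_p$ (and $d_p$ itself) depend on $n$, using that these are values of fixed polynomials $g_1,g_p\in K[X]$ at $z_n$. By Kaplansky, such values are either ultimately constant or ultimately increasing, with increments governed by relative approximation data (Theorem \ref{Thm j-invariant characterization} applied to $g_1,g_p$). In both cases, the left side of the identity has a valuation varying only in this controlled way. On the other side, $Q(z_n)^p-Q(z_n)$ has value exactly $-1/p^{n-1}$ and its leading part is $-t^{-1/p^{n-1}}$.

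In the first case I would compare $p\,v e_1=p\,v(c_1+d_1)$, where $v c_1=-1/p^2$, with this quantity. The aim is to show that either $d_p$ must cancel $-t^{-1/p^{n-1}}$ for all large $n$ (impossible, because $g_p(z_n)$ cannot track $a$-type expansions: this would produce a limit of $\C$ in $K$, or a root of $g_p$ behaving incompatibly with $\deg\C=p$), or the resulting value is incompatible with $v e_1+\g_n<v e_i+i\g_n$ for $i=2$ and $i=p+1$, where $e_2=b+d_2$ and $e_{p+1}=1+d_{p+1}$. The specific choice $b^p=-2^{1-p}$ should enter exactly here. It should rule out the simultaneous cancellation that would be needed in $e_1$ and $e_2$, presumably via the relation between $d_1$ and $d_2$ forced by $d_1^p\approx z_n$.

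The second case should be handled similarly, but I expect it to be easier, since there $w(F-F(z_n))\ge 0$ eventually and the $z_n$-term of value $-1/p$ must be absorbed by $d_p-d_1^p$.
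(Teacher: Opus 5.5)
Your Steps 1--3 are correct and match the paper's setup: the reduction to $h_K(X:F)=1$, the coefficients $e_1=c_1+d_1$ and $e_p=z_n+d_p-d_1^p$, and the identity $e_p+e_1^p=d_p+Q(z_n)^p-Q(z_n)$. The gap is Step 4, which is a plan rather than an argument, and it points away from where the contradiction actually lies. No $n$-dependence or Kaplansky-type behaviour of $g_1(z_n),g_p(z_n)$ is needed. The constant $b$ plays no role beyond making $c_1^p=Q(z_n)^p-Q(z_n)-z_n$ hold, which you already used. Your Case 2 is left open.

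The missing idea is that the Frobenius terms $-d_i^p(X-z_n)^{ip}$ force $g$ to have infinitely many nonzero Taylor coefficients. This happens at a single fixed $n$, so the finiteness of $\deg g$ gives the contradiction.

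\emph{Step A: $ve_1<p\g_n$.} Compare $e_1$ with $e_{p+1}=1+d_{p+1}$. If $vd_{p+1}\neq 0$, then $ve_{p+1}\le 0$, which already gives $ve_1<p\g_n$. If $vd_{p+1}=0$, let $r$ be the last index with $vd_{p^i(p+1)}=0$ for all $i\le r$. The coefficient at degree $p^{r+1}(p+1)$ is $d_{p^{r+1}(p+1)}-d_{p^r(p+1)}^p$, which has value $\le 0$. Comparing it with $e_1$ gives $ve_1+\g_n<p^{r+1}(p+1)\g_n\le(p+1)\g_n$, so again $ve_1<p\g_n$.

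\emph{Step B: consequences of Step A.} Since $ve_1+\g_n<(p+1)\g_n<0$, your Case 2 cannot occur. In Case 1, note $v\bigl(Q(z_n)^p-Q(z_n)\bigr)=p^2\g_n>p\,ve_1$. So $d_p$ simply dominates, with no cancellation of $-t^{-1/p^{n-1}}$ to exploit, and you get $vd_p=p\,ve_1$; in particular $d_p\neq 0$.

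\emph{Step C: the chain.} For $i\ge 2$ we have $c_{p^i}=0$, so the coefficient of $(X-z_n)^{p^i}$ is $d_{p^i}-d_{p^{i-1}}^p$. The condition $j_{w_n}=1$, together with $ve_1+\g_n<0$, gives $v\bigl(d_{p^i}-d_{p^{i-1}}^p\bigr)>p^i ve_1=p^{i-1}vd_p$. By induction on $i$, $vd_{p^i}=p^{i-1}vd_p$ for every $i\ge 1$. Hence $d_{p^i}\neq 0$ for all $i$, which is impossible because $g$ is a polynomial.

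This chain argument (and its $(p+1)p^r$ analogue in Step A) is the essential ingredient. Nothing in your Step 4 supplies a substitute for it.
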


\begin{proof}
	We prove by contradiction. Set \( F:= f - g^p + g \) and suppose that \(h_K(X:F) < p\). Then 
	\[   h_K(X:F) = 1  \]
	by Proposition \ref{Prop h = p^e q}. By Theorem \ref{Thm central j-invariant characterization}, 
	\[ j_{w_n} \left( F(X) - F(z_n) \right) = 1 \text{ for all } n \text{ sufficiently large}.  \]
	Choose such an \(n\) and write 
	\[ g(X) - g(z_n) = \sum_{i=1}^{t} d_i \left( X - z_n \right)^i.  \]
	Setting \( d_i = 0 \) if required, we further assume that \( t \ge p+1 \). Using (\ref{eqn c_i}), we obtain the Taylor series expansion 
	\begin{equation}\label{eqn F-expansion}
		\begin{aligned}
			F(X)-F(z_n)
			={}&(c_1+d_1)(X-z_n)
			+(b+d_2)(X-z_n)^2 \\
			&+(z_n+d_p-d_1^p)(X-z_n)^p
			+(1+d_{p+1})(X-z_n)^{p+1} \\
			&+\sum_{\substack{3\le i\le t\\ i\neq p,p+1}}
			d_i(X-z_n)^i
			-\sum_{i=2}^{t} d_i^p(X-z_n)^{ip}.
		\end{aligned}
	\end{equation}
We will now show that
\begin{equation}\label{eqn v(c_1+d_1) < p gamma_n}
	v \left( c_1 + d_1 \right) < p\g_n.
\end{equation}
Since \( j_{w_n} (F(X) - F(z_n)) = 1 \), applying Corollary \ref{Coro j(f) when deg(Q)=1} to (\ref{eqn F-expansion}) yields
\[ v(c_1 + d_1) + \g_n < v \left( 1 + d_{p+1} \right) + (p+1)\g_n.  \]
Therefore if \( vd_{p+1} \neq 0 \), the triangle inequality implies (\ref{eqn v(c_1+d_1) < p gamma_n}). Now assume that \( v d_{p+1} = 0 \). Choose \(r\in\NN\) such that  
\begin{align*}
	v d_{p^i (p+1)} &= 0 \text{ for all } 0\le i \le r,\\
	v d_{p^{r+1} (p+1)} & \ne 0. 
\end{align*}
Note that here we use the convention \( d_i = 0 \) for \( i > t \). Since \( vd_{p^r(p+1)} = 0 \), we have \( p^r(p+1) \le t \). Comparing the linear term and the monomial of degree \( p^{r+1}(p+1) \) in (\ref{eqn F-expansion}), and applying Corollary \ref{Coro j(f) when deg(Q)=1}, we obtain
\begin{align*}
	v (c_1 + d_1) + \g_n &< v \left( d_{p^{r+1}(p+1)} - d_{p^r(p+1)}^p \right) + p^{r+1}(p+1)\g_n \\
	& \le v d_{p^r(p+1)}^p + p^{r+1}(p+1)\g_n = p^{r+1}(p+1)\g_n\\
	&\le (p+1)\g_n,
\end{align*}
thereby again yielding (\ref{eqn v(c_1+d_1) < p gamma_n}). On the other hand, observe that 
\[ c_1^p + z_n = Q(z_n)^p - Q(z_n).  \]
Since \( vQ(z_n) = -1/p^n = p\g_n \), the above expression gives us 
\[ v\left( c_1^p + z_n \right) = p^2\g_n. \]
Thus (\ref{eqn v(c_1+d_1) < p gamma_n}) can be modified as
\begin{equation}\label{eqn pv(c_1+d_1) < v(c_1^p + z_n)}
	pv(c_1+d_1) < v\left( c_1^p + z_n \right).
\end{equation} 
Corollary \ref{Coro j(f) when deg(Q)=1} and (\ref{eqn F-expansion}) further yield that
\[ v(c_1+d_1)+\g_n < v(z_n + d_p - d_1^p) + p\g_n.  \]
Since \( v(c_1+d_1)+\g_n < (p+1)\g_n < 0 \), we have \( pv(c_1+d_1) + p\g_n < v(c_1+d_1) + \g_n \). Therefore, we obtain 
\[ pv(c_1+d_1) < v(z_n + d_p - d_1^p)  \]
from the above expression. Employing the triangle inequality, we have
\[ pv(c_1+d_1) = v(c_1^p + z_n + d_p).  \]
Together with (\ref{eqn pv(c_1+d_1) < v(c_1^p + z_n)}), this implies
\[  pv(c_1+d_1) = vd_p < v(c_1^p + z_n).  \]
In particular, 
\[ d_p \ne 0. \]
For any \(i>1\), a comparison of the linear term and the monomial of degree \(p^i\) in (\ref{eqn F-expansion}) in combination with an application of Corollary \ref{Coro j(f) when deg(Q)=1} yields 
\[ v(c_1 + d_1) + \g_n < v \left( d_{p^i} - d_{p^{i-1}}^p \right) + p^i \g_n,  \]
whereby 
\[  p^i v(c_1 + d_1) < v \left( d_{p^i} - d_{p^{i-1}}^p \right).  \] 
Since \( pv(c_1 + d_1) = v d_p \), this implies that
\begin{equation}\label{eqn vd_p condition}
	p^{i-1} vd_p < v \left( d_{p^i} - d_{p^{i-1}}^p \right) \text{ for all } i > 1.
\end{equation}
Take \(s\in\NN\) such that 
\begin{align*}
	d_{p^i} &\ne 0 \text{ for all } 1 \le i \le s,\\
	d_{p^{s+1}} &= 0,
\end{align*}
where we use the convention \(d_i = 0\) for \(i > t\). Repeated application of the triangle inequality in (\ref{eqn vd_p condition}) yields that
\[ v d_{p^s} = p^{s-1} vd_p. \]
On the other hand, plugging in \( i = s+1 \) in (\ref{eqn vd_p condition}), we obtain
\[ p^s vd_p < v\left( d_{p^{s+1}} - d_{p^s}^p \right) = p vd_{p^s} = p^s vd_p, \]
thereby reaching a contradiction. We thus have the theorem. 
\end{proof}

The above example demonstrates that, unlike the tame case, one cannot expect to reduce the relative approximation degree to one merely by replacing an Artin-Schreier polynomial with an equivalent representative. Consequently, the reduction step underlying Kuhlmann's proof of henselian rationality over tame fields cannot be carried over directly to perfect fields. On the other hand, the obstruction does not imply a failure of henselian rationality. In fact, the associated Artin-Schreier function field is henselian rational, as illustrated underneath.  

\begin{Proposition}
	Fix an extension of \(w\) to \(\overline{K(X)}\). The extension \( (K(X,\th)|K,w) \) is henselian rational, where \( \th^p - \th = f \).
\end{Proposition}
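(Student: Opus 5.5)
The plan is to follow the route sketched in the introduction. I will perturb $f$ by a scalar multiple of $Q$ of positive value, which does not change the Artin-Schreier class modulo $\P(K(X)^h)$. I will then find a representative of the new class modulo $\P(K[X])$ whose $j_{w_n}$-invariant is one, and conclude with Corollary \ref{Coro h(f)=1}.

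\emph{Step 1: the value of $Q$.} First I will compute $wQ(X)$. Write $X-a$ and observe from the construction of $\mathcal{A}$ that $w(X-a)=\d_\o=i_1>0$ (in fact $i_1\ge 1$). Since
\[ Q(X)=(X-a)^p-(X-a), \]
it follows that $wQ(X)=i_1\ge 1$.

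\emph{Step 2: the perturbation.} Set $c:=-t^{-1/p}$ and $G_0:=f+cQ$. Then
\[ w(cQ)=i_1-\tfrac1p>0 . \]
For any $u\in K(X)^h$ with $wu>0$, the polynomial $T^p-T-u$ has a root in $K(X)^h$ by Hensel's Lemma, since it reduces to $T^p-T$, which has the simple root $0$. Hence $cQ\in\P(K(X)^h)$, so $G_0\equiv f\bmod \P(K(X)^h)$. Consequently a root $\th_0$ of $T^p-T-G_0$ satisfies $K(X)^h(\th)=K(X)^h(\th_0)$. The same holds for any $G\equiv G_0\bmod\P(K[X])$.

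\emph{Step 3: the Taylor coefficients of $G_0$.} Next I will compute the Taylor coefficients of $G_0(X)-G_0(z_n)$ from (\ref{eqn c_i}). The coefficient of $(X-z_n)^p$ becomes $z_n-t^{-1/p}=\sum_{k=2}^n t^{-1/p^k}$, of value $-1/p^2$. The linear coefficient becomes $c_1-c\,Q'(z_n)=c_1+c=c_1-t^{-1/p}$. The coefficients of $(X-z_n)^2$ and $(X-z_n)^{p+1}$ remain $b$ and $1$. I expect this to destroy the dominance of the degree-$p$ term: with $\g_n=-1/p^{n+1}$ and $n$ large, the linear term should now dominate, or at worst the degree-$p$ term should be brought into the regime where $\b+h\g_\nu$ eventually becomes positive.

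\emph{Step 4: reduction to degree one, and the main obstacle.} Two outcomes are possible.
\begin{enumerate}[label=(\alph*)]
\item If the computation directly gives $j_{w_n}(G_0(X)-G_0(z_n))=1$, I take $G:=G_0$.
\item Otherwise, I apply the construction of Proposition \ref{Prop h(X:F) leq p}. I subtract $\P(d(X-z_n))$ for a suitable $d\in K$, landing in the case (\ref{eqn beta + h gamma > 0}), to obtain $G\equiv G_0\bmod\P(K[X])$ with $j_{w_n}(G(X)-G(z_n))=1$.
\end{enumerate}
The main obstacle is this valuation bookkeeping. I must verify $v(c_1-t^{-1/p})$, which I expect to rewrite via $c_1^p=Q(z_n)^p-Q(z_n)-z_n$. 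I must then check that the linear term beats the modified degree-$p$ term and the degree-$2$ and degree-$(p+1)$ terms for all large $n$. This is exactly where the earlier obstruction, $v(c_1^p+z_n)=p^2\g_n$, has to be shown to disappear once $z_n$ is replaced by $z_n-t^{-1/p}$.

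\emph{Step 5: conclusion.} Corollary \ref{Coro h(f)=1} gives $K(X)^h=K(G(X))^h$. Let $\a$ be a root of $T^p-T-G$. Then
\[ K(X,\th)^h=K(X)^h(\th)=K(X)^h(\a)=K(G(X))^h(\a)=K(\a)^h, \]
using $G(X)=\a^p-\a\in K(\a)$. This is henselian rationality of $(K(X,\th)|K,w)$, with the element $\a$ in Definition \ref{Defn henselian rationality}.
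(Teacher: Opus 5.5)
Your strategy is the paper's. You perturb $f$ by $cQ$ with $w(cQ)>0$, use Hensel's Lemma to stay in the class of $f$ modulo $\P(K(X)^h)$, produce a representative with $j_{w_n}=1$, and finish with Corollary~\ref{Coro h(f)=1}. Your computation of $wQ$ via $Q(X)=(X-a)^p-(X-a)$ and $w(X-a)=\d_\o=i_1$ is a neater version of the paper's.

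The real difference is the choice of $c$. The paper takes any $c$ with $-1<vc<-1/p^3$. For such $c$ the degree-$p$ term of $g(X)-g(z_n)$ still dominates in general, so the paper must pass to $G=g-\P\bigl((z_n+c)^{1/p}(X-z_n)\bigr)$, which kills the degree-$p$ coefficient. Your $c=-t^{-1/p}$ instead cancels the leading term of $c_p=z_n$, and then no reduction modulo $\P(K[X])$ is needed at all.

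However, you stop exactly at the step that carries the proof, and it has to be done. For $n\ge 3$ we have $vc_1=-1/p^2$, hence $v(c_1+t^{-1/p})=-1/p$. Also $v(z_n-t^{-1/p})=-1/p^2$ and $vb=0$. With $\g_n=-1/p^{n+1}$, the monomials of $G_0(X)-G_0(z_n)$ of degrees $1,2,p,p+1$ have $w_n$-values
\[ -\tfrac1p-\tfrac1{p^{n+1}},\qquad -\tfrac{2}{p^{n+1}},\qquad -\tfrac1{p^2}-\tfrac1{p^{n}},\qquad -\tfrac{p+1}{p^{n+1}}. \]
The first is strictly the smallest, because $(p-1)/p^{n+1}<(p-1)/p^2$ and $1/p^n<1/p$. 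By Corollary~\ref{Coro j(f) when deg(Q)=1} together with (\ref{eqn mpd for w_nu}), $j_{w_n}(G_0(X)-G_0(z_n))=1$. So outcome (a) holds, $G:=G_0$ works, and with this check added your argument is a complete proof, slightly shorter than the paper's.

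There are two corrections.
\begin{enumerate}
\item The linear coefficient is $c_1+cQ'(z_n)=c_1-c=c_1+t^{-1/p}$, not $c_1+c$. This is harmless here, since both have value $-1/p$.
\item Your fallback (b) is not a valid mechanism. In Proposition~\ref{Prop h(X:F) leq p}, whether you are in case (\ref{eqn beta + h gamma > 0}) or (\ref{eqn beta + h gamma < 0}) is determined by the polynomial, not by the choice of $d$. For $f$, and for the paper's generic $f+cQ$, the dominant degree-$p$ term has negative $w_n$-value, so you are in case (\ref{eqn beta + h gamma < 0}). There that construction returns $j=p$, not $1$. For the same reason, the alternative you mention in Step 3 (pushing into the regime $\b+h\g_\nu>0$) does not occur here. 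Had a reduction been needed, the right move is the paper's: subtract $\P\bigl((z_n+c)^{1/p}(X-z_n)\bigr)$ to kill the degree-$p$ coefficient outright.
\end{enumerate}
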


\begin{proof}
	Since the pseudo-convergent sequence \(\mathcal{A}\) is of transcendental type, we obtain that \(wQ = vQ\left(a+ b_{i_1}t^{i_1}+ \dotsc + b_{i_n}t^{i_n}\right)\) for sufficiently large \(n\). Observe that
	\begin{align*}
		Q\left(a+ b_{i_1}t^{i_1}+ \dotsc + b_{i_n}t^{i_n}\right) &= \left(a+ b_{i_1}t^{i_1}+ \dotsc + b_{i_n}t^{i_n}\right)^p - \left(a+ b_{i_1}t^{i_1}+ \dotsc + b_{i_n}t^{i_n}\right) - \dfrac{1}{t} \\
		&= \left(b_{i_1}t^{i_1}+ \dotsc + b_{i_n}t^{i_n}\right)^p - \left(b_{i_1}t^{i_1}+ \dotsc + b_{i_n}t^{i_n}\right).
	\end{align*}
It follows from the triangle inequality that
\[ wQ = i_1 \ge 1.  \]
Choose \(c\in K\) with \( -1 < vc < -\dfrac{1}{p^3} \) and set 
\[ g := f + cQ.  \]
Observe that
\[ g(X) - g(z_n) = (c_1 - c)(X-z_n) + b(X-z_n)^2 + (z_n + c)(X-z_n)^p + (X-z_n)^{p+1}.   \]
Fix some \(n\ge 4\). Reducing modulo the Artin-Schreier operator, we have \( g(X) \equiv G(X) \bmod \P(K[X]) \), where
\begin{equation}\label{eqn G(X)-G(z_n)}
	G(X)  = g(z_n) + \left( c_1 - c + z_n^{1/p} + c^{1/p}  \right)(X-z_n) + b(X-z_n)^2 + (X-z_n)^{p+1}.
\end{equation}
Using the expression of \(c_1\) in (\ref{eqn c_i}), a straightforward computation yields
\[  c_1 - c + z_n^{1/p} + c^{1/p} = (Q(z_n) - c) - (Q(z_n) - c)^{1/p}.  \]
Since \(n\ge 4\), we have \( vc < -1/p^3 < -1/p^n = vQ(z_n) \). Hence we obtain from the triangle inequality that
\[  v \left( c_1 - c + z_n^{1/p} + c^{1/p}  \right) = vc.  \]
Moreover,
\[ vc+ \g_n < (p+1)\g_n < 2\g_n.  \]
As a consequence, (\ref{eqn G(X)-G(z_n)}) and Corollary \ref{Coro h(f)=1} yield
\[  h_K(X:G) = j_{w_n} (G(X) - G(z_n)) = 1. \]
Finally, 
\[ w(g-f) = vc + wQ = vc+ i_1 \ge vc + 1 > 0. \]
Therefore, we obtain from Hensel's Lemma that
\[  g \equiv f \bmod \P(K(X)^h).  \]
It follows that
\[ G\equiv f \bmod \P(K(X)^h).  \]
Hence there exists another Artin-Schreier generator \(\a\) over \(K(X)^h\) such that
\[ K(X)^h(\th) = K(X)^h(\a) \text{ and } \a^p - \a = G.  \]
Since \(h_K(X:G) = 1\), we conclude that
\[ K(X)^h(\th) = K(X)^h(\a) = K(G(X))^h(\a) = K(\a)^h.  \]
Thus \( (K(X,\th)|K,w) \) is henselian rational.
\end{proof}


\section{Sufficient conditions for henselian rationality over rank one perfect fields}\label{Section henselian rationality over IC} Throughout this section, we assume that \(K\) is a perfect valued field of rank one and characteristic \(p>0\) and \( (F|K,w) \) is an immediate extension of valued function fields of transcendence degree one. Fix an extension of \(w\) to \(\overline{F}\), denoted again by \(w\). For any \(X\in F\setminus K\) transcendental over \(K\), there is a pseudo-convergent sequence \(\C\subset (K,v)\) without any limit in \(K\) and with \(X\) as a limit. Such a sequence will be called a pseudo-convergent sequence \textit{corresponding} to the extension \((K(X)|K,w)\).

\subsection{Finite descent of henselian rationality}

\begin{Lemma}\label{Lemma henselian ratioanlity pull down finite extns}
	Let \((L|K,v)\) be a finite subextension of \( (F^h|K,w) \) and assume that \( (F^h|L,w) \) is henselian rational. Then \( (F|K,w) \) is also henselian rational.
\end{Lemma}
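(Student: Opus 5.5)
The plan is a Krasner-type perturbation. We absorb a primitive element of \(L|K\) into a generator of \(F^h\) over \(L\), chosen so that the new generator approximates that primitive element more closely than any of its conjugates.

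Since \(K\) is perfect, the finite extension \(L|K\) is separable. Hence \(L=K(c)\) for some \(c\in L\subseteq F^h\). Let \(c=c_1,c_2,\dotsc,c_m\) be the distinct conjugates of \(c\) over \(K\) in \(\overline{F}\), and set
\[ \kappa:=\max\{v(c-c_i)\mid 2\le i\le m\}, \]
which is a finite element of the value group, or \(-\infty\) if \(m=1\).

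By hypothesis there is \(Y\in F^h\), transcendental over \(L\), with \(F^h=L(Y)^h\). Replacing \(Y\) by \(Y':=eY\) for a nonzero \(e\in K\) does not change \(L(Y)\). Since \(vK\) is nontrivial, we can choose \(e\) with \(w(Y')=ve+wY>\kappa\). We then set
\[ Z:=c+Y'\in F^h. \]
Then \(Z\) is transcendental over \(K\), because otherwise \(Y'=Z-c\) would be algebraic over \(K\). Also \(K(Z)^h\subseteq F^h\), since \(F^h\) is henselian and contains \(K(Z)\), and the henselization is unique inside the fixed extension of \(w\) to \(\overline F\).

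The key step is to show \(c\in K(Z)^h\), which we do with Krasner's lemma over the henselian field \(K(Z)^h\). The element \(c\) is separable algebraic over \(K(Z)^h\). Its conjugates over \(K(Z)^h\) lie among \(c_1,\dotsc,c_m\). Since \(Z\in K(Z)^h\) and
\[ w(Z-c)=w(Y')>\kappa\ge v(c-c_i)\quad\text{for all } i\ge 2, \]
Krasner's lemma gives \(c\in K(Z)^h(Z)=K(Z)^h\). It follows that \(K(Z)^h\) contains \(c\) and \(Y'=Z-c\), hence contains \(L(Y')=L(Y)\). Being henselian, it therefore contains \(L(Y)^h=F^h\). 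Together with the reverse inclusion we get \(F^h=K(Z)^h\), so \((F|K,w)\) is henselian rational.

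The main obstacle is the Krasner step: we must ensure the henselization of \(K(Z)\) is taken with respect to the same fixed extension of \(w\), so that the conjugates of \(c\) over \(K(Z)^h\) are among \(c_1,\dotsc,c_m\). We also need the separability of \(c\) over \(K(Z)^h\), which follows from the perfectness of \(K\). The remaining steps are routine.
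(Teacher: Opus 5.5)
Your proposal is correct and follows the paper's own argument: write \(L=K(c)\) by perfectness, rescale the henselian generator by a constant from \(K\) so its value exceeds the Krasner constant \(\max\{v(c-c_i)\}\) of \(c\), translate by \(c\), and use Krasner's lemma to get \(c\in K(Z)^h\), which yields \(F^h=L(Y)^h=K(Z)^h\). The only difference is cosmetic: you apply the classical Krasner lemma directly over the henselian field \(K(Z)^h\), whereas the paper invokes Kuhlmann's variant of Krasner's lemma.
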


\begin{proof}
	Write \(F^h = L(X)^h\) for some \(X\in F^h\setminus L\) transcendental over \(L\). Since \(K\) is perfect, the Primitive Element Theorem asserts that \(L = K(a)\) for some \(a\in L\). Consider the Krasner constant of \(a\):
	\[  \o_K(a) := \left\{ v (\s a - a) \,\middle|\, \s\in \Gal(\overline{K}|K) \text{ and } \s a \neq a \right\}.  \]
	Choose \(c\in K^\times\) such that \( w(cX) > \o_K(a) \) and set \( Y:= cX + a\). Then \( w(Y-a) > \o_K(a)\). A variant of Krasner's Lemma \cite[Lemma 2.21]{Kuhlmann2004BadPlaces} then yields that
	\[  a \in K(Y)^h. \]
	Consequently, 
	\[ L(X)^h = K\left( a, \dfrac{Y-a}{c} \right)^h  = K(a,Y)^h = K(Y)^h. \]
	Since \( F^h = L(X)^h \), we conclude that \((F|K,w)\) is henselian rational.
\end{proof}


\subsection{Henselian rationality over implicit constant fields}

The above lemma shows that it can be beneficial to investigate when \(F^h\) is henselian rational over the relative algebraic closure of \(K\) in \(F^h\). This leads us to the theory of \textit{implicit constant fields}, formulated by Kuhlmann in \cite{Kuhlmann2004BadPlaces}.

\begin{Definition}
	Let \( (\O|k,\nu) \) be any extension of valued fields and fix an extension of \(\nu\) to \(\overline{\O}\). The \textbf{implicit constant field} of the extension is defined as the relative algebraic closure of \(k\) in the henselization of \(\O\), that is,  
	\[ IC(\O|k,\nu) := \overline{k} \sect \O^h.  \]
\end{Definition}

We want to investigate when \(F^h\) is henselian rational over \(IC(F|K,w)\). In the rank one setup, the desired conclusion holds after passage to the absolute ramification field of \((K,v)\).  
\begin{Definition}
	The \textbf{absolute ramification group} of a perfect valued field \( (K,v) \) is defined as
	\[  G^r := \left\{ \s\in \Gal(\overline{K}|K) \,\middle|\, v\left( \s a - a \right) > va \text{ for all } a\in\overline{K} \setminus \{0\}  \right\}. \]
	The fixed field of \(G^r\) is called the \textbf{absolute ramification field} of \( (K,v) \), and is denoted by \(K^r\). 
\end{Definition}
Then
\[  vK^r = \left\{ \a\in \QQ\tensor_{\ZZ} vK \mid n\a\in vK \text{ for some } n \text{ coprime to }p  \right\},  \]
and 
\[ K^rv = \text{separable-algebraic closure of } Kv,  \]
by Part c) of \cite[Theorem 5.7]{Kuhlmann2000LocalUniformization}. \( (K,v) \) being perfect implies that the value group \( vK \) is \( p\)-divisible and the residue field \(Kv\) is perfect. Consequently,
\[ vK^r \text{ is a divisible group and } K^rv \text{ is algebraically closed}.  \]
Hence when \( K = K^r \), we are in the setting where immediate extensions are controlled by Artin-Schreier theory.

\begin{Proposition}\label{Prop pcs tr type over IC}
	Assume that \(K = K^r\). Take some \(X \in F\setminus K\) transcendental over \(K\) and set \(I := IC(K(X)|K,w)\). Take a pseudo-convergent sequence \(\C \subset (I,v)\) corresponding to the immediate extension \((I(X)|I,w)\). Then \(\C\) is of transcendental type. 
\end{Proposition}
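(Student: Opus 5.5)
Assume, for a contradiction, that \(\C\) is of algebraic type over \(I\). We will produce an algebraic limit of \(\C\) that lies in \(I\), which is impossible because \(\C\) has no limit in \(I\).

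First we collect structural facts about \(I\). Since \(K(X)^h\) is henselian and \(I\) is relatively algebraically closed in it, \(I\) is henselian. Since \((K(X)|K,w)\) is immediate, so is \((I|K,v)\). As \(K=K^r\), the value group \(vK\) is divisible and \(Kv\) is algebraically closed, and these transfer to \(I\). Also \(I\) is perfect, being algebraic over the perfect field \(K\). Hence every finite extension of \(I\) is immediate and of \(p\)-power degree, and \(I\) equals its own absolute ramification field. Moreover \((I(X)|I,w)\) is immediate, because \(I(X)\subseteq K(X)^h\), which is immediate over \(K\).

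Now let \(Q\in I[X]\) be an associated minimal polynomial to \(\C\), and pick \(a\in\Z(Q)\sect\Lim\C\) by Lemma \ref{Lemma vf(z) increasing iff Z(f) has limits}. Put \(\g:=\sup\g_\nu\) and \(w^*=v_{a,\g}\) as in Section \ref{Section Prelims}. Since \(X\in\Lim\C\) and \(a\in\Lim\C\), we get \(w(X-a)\ge\g_\nu\) for all \(\nu\). Since \(\C\) is not Cauchy, \(\g\) is not \(\infty\).

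The key step is to show that \(a\in I\), i.e. \(a\in K(X)^h\). The intended tool is Krasner's lemma in the form \cite[Lemma 2.21]{Kuhlmann2004BadPlaces} used in Lemma \ref{Lemma henselian ratioanlity pull down finite extns}: it suffices to have \(w(X-a)>\o_I(a)=\max\{v(\s a-a)\mid \s\in\Gal(\overline{I}|I),\ \s a\neq a\}\). Because \(I\) is henselian, every conjugate \(\s a\) is again a limit of \(\C\) (argued as in the proof of Proposition \ref{Prop Q^h irr over K^h}), so \(v(\s a-a)\ge\g_\nu\) for all \(\nu\). Thus we need the reverse estimate \(v(\s a-a)<w(X-a)\) for \(\s a\ne a\), and this is where \(K=K^r\) enters.

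The planned route is as follows. The extension \(I(a)|I\) is an immediate defect extension over a henselian field equal to its absolute ramification field. I will try to reduce to an Artin–Schreier step, replacing \(a\) by a suitable generator of a degree-\(p\) subextension if \(\deg Q>p\). In the Artin–Schreier case, \(\s a-a\in\FF_p^\times\), so \(v(\s a-a)=0\). Then one needs \(w(X-a)>0\), which can be arranged by translating \(X\) by an element of \(I\) and rescaling: since \(vI\) is divisible, the distances \(v(a-I)\) have a supremum that can be normalised, as in the example of Section \ref{Sect example}. Alternatively, since \(X\) is transcendental while \(a\) is algebraic, one can compare distances: \(w(X-a)\) exceeds every \(\g_\nu\), and the \(w\)-values of \(X\) relative to algebraic elements are realised through the immediate valuation \(w\) rather than through \(w^*\).

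This comparison is the main obstacle. It is exactly the step that fails over \(K\) in Section \ref{Sect example}, where \(a\notin K(X)^h\) because \(w(X-a)\) need not beat the Krasner constant. Over \(I\) one must use that every element of \(K(X)^h\) algebraic over \(K\) already belongs to \(I\), so any algebraic approximant produced by Hensel's lemma inside \(K(X)^h\) lies in \(I\). I would therefore first try to construct, directly by Hensel's lemma applied to \(Q\) over \(K(X)^h\) near \(X\), a root of \(Q\) in \(K(X)^h\). If this succeeds, that root lies in \(I\) and is a limit of \(\C\), contradicting that \(\C\) has no limit in \(I\). Hence \(\C\) is of transcendental type.
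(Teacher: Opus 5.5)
Your overall strategy is the right one: use the Krasner-type lemma to force an algebraic element into \(K(X)^h=I(X)^h\), contradicting that \(I\) is relatively algebraically closed there, and work with an Artin--Schreier generator \(\theta\) (Krasner constant \(0\)) of a degree-\(p\) subextension of \(I(a)|I\). But the proposal stops exactly at the step that carries the proof, as your ``if this succeeds'' acknowledges. Two ingredients are missing.

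\emph{First}, once you replace \(a\) by \(\theta\), \(X\) no longer approximates the relevant element. This replacement is needed even when \(\deg Q=p\), since a root of \(Q\) need not be an Artin--Schreier generator. In general \(\theta\) is not a limit of \(\mathcal{C}\), so estimates on \(w(X-a)\) say nothing about \(\theta\). The paper writes \(\theta=f(a)\) with \(f\in I[X]\), \(\deg f<\deg Q\), and uses \(f(X)\) as the approximant. No root of \(f(X)-f(z_\nu)\) is a limit of \(\mathcal{C}\), while \(a\) is. Hence \(v(f(a)-f(z_\nu))=w(f(X)-f(z_\nu))=\beta+h\gamma_\nu\), with \(\beta=\beta_I(X:f)\) and \(h=h_I(X:f)\), and so \(w(f(X)-\theta)>\beta+h\gamma_\nu\) for all large \(\nu\). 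Minimality of \(Q\) then shows that \(\{\beta+h\gamma_\nu\}\) is cofinal in \(v(\theta-I)\). Otherwise some \(z\in I\) would make \(\{v(f(z_\nu)-z)\}\) strictly increasing, although \(\deg(f-z)<\deg Q\).

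\emph{Second}, you must know what \(v(\theta-I)\) is. The decisive input is that \(I(\theta)|I\) is an immediate Artin--Schreier extension of a perfect field. It is therefore an independent defect extension, so in rank one \(v(\theta-I)=(vI)_{<0}\) \cite[Lemma 2.14(k)]{Kuhlmann2010ArtinSchreier}. This yields \(w(f(X)-\theta)\ge 0\). Replace \(\theta\) by \(\theta+c\), where \(c\in I\) lifts the residue of \(f(X)-\theta\). This pushes the distance above \(0=\omega_I(\theta+c)\), and Krasner gives \(\theta+c\in I(X)^h\setminus I\), a contradiction.

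Nothing in your proposal substitutes for these steps. The Krasner condition is invariant under simultaneous affine changes of \(X\) and the algebraic element. So ``translating and rescaling \(X\)'' or ``normalising the supremum of \(v(a-I)\)'' cannot create the needed inequality. What matters is how the distance compares with the Krasner constant, and that is exactly the independent-defect property coming from perfectness and rank one.

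Your fallback, applying Hensel's lemma to \(Q\) over \(K(X)^h\) near \(X\), has no supporting estimate. You would need something like \(wQ(X)>2\,wQ'(X)\), and nothing in the hypotheses provides it. Moreover, a root of \(Q\) itself is the wrong target when \(\deg Q>p\); it suffices to get the degree-\(p\) generator into \(I(X)^h\), which already contradicts \(\theta\notin I\).

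Your reading of Section 7 is also inaccurate. There \(Q(X)=(X-a)^p-(X-a)\) and \(w(X-a)=i_1\ge 1>0=\omega_K(a)\), so in fact \(a\in K(X)^h\). The paper uses \(wQ\ge 1\) precisely for this kind of Hensel argument. The obstruction in that section concerns representatives modulo \(\mathcal{P}(K[X])\), not membership of \(a\) in the henselization. This fits the proposition: over \(I\supseteq K(a)\), the sequence \(\mathcal{C}\) acquires the limit \(a\).
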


\begin{proof}
	Suppose that \(\C\) is of algebraic type. Take an associated minimal polynomial \(Q\in I[X]\) and a root \(a\in \Z(Q)\). The perfectness of \(K\) implies that \(I\) is also perfect. Moreover, \(I^r = I\cdot K^r = I\). Thus \(vI\) is divisible and \(Iv\) is algebraically closed. It now follows from \cite[Lemma 5.1]{Kuhlmann2019EliminationII} that \(I(a)|I\) is a tower of Artin-Schreier extensions. We thus have an Artin-Schreier generator \(\th\) with 
	\[ I\subsetneq I(\th) \subseteq I(a).  \]
	Observe that \((I(\th)|I,v)\) is immediate, and hence an Artin-Schreier defect extension. Since \(I\) is perfect, this extension is an \textit{independent} defect extension, in the language of \cite{Kuhlmann2010ArtinSchreier}. As a consequence, since \((I,v)\) is a rank one valued field, we conclude from \cite[Lemma 2.14(k)]{Kuhlmann2010ArtinSchreier} that
	\begin{equation}\label{eqn v(theta-I) = 0^-}
		v\left( \th - I \right) = (vI)_{<0}.
	\end{equation}
	Write \(\th = f(a)\) where \(f(X)\in I[X]\) with \(\deg f < \deg Q\). Let \(\C = \left\{z_\nu\right\}_{\nu<\l}\) and \(\g_\nu := v(z_\nu - z_{\nu+1})\). Then
	\[  w\left( f(X) - f(z_\nu) \right) = \b + h\g_\nu \text{ for all } \nu<\l,  \]
	where \( \b:= \b_I(X:f) \) and \(h:= h_I(X:f)\). Consider the factorization \(f(X)-f(z_\nu) = \prod (X-c_i)\) over \(\overline{I}\). Since none of the \(c_i\)'s are limits of \(\C\) and \(a \in \Lim\C\), we obtain that \( w(X-a) > v(X-c_i) \) for all \(i\). Therefore,
	\[ v\left( f(a) - f(z_\nu) \right) = w\left( f(X) - f(z_\nu) \right) = \b + h\g_\nu. \]
	The triangle inequality then yields that 
	\begin{equation}\label{eqn w(f-theta) > beta + h gamma_nu}
		w\left( f(X) - f(a) \right) > \b+h\g_\nu \text{ for all } \nu<\l.
	\end{equation} 
	Observe that
	\[  \b+h\g_\nu = v(\theta - f(z_\nu)) \in v(\th - I). \]
	If \( \left\{ \b + h\g_\nu \right\} \) is not cofinal in \(v(\th-I)\), then there exists some \(z\in I\) such that \( v(\th-z) > \b+ h\g_\nu \) for all \(\nu\). As a consequence,
	\[  v(f(z_\nu) - z) = \b + h\g_\nu \text{ for all } \nu<\l. \]
	In other words, setting \(f':= f(X) - z \in I[X]\), we have
	\[ vf'(z_\nu) = \b + h\g_\nu \text{ for all } \nu<\l. \]
	Thus \(\{ vf'(z_\nu) \}\) is strictly increasing along \(\C\). However, this contradicts the minimality of \(Q\), since \(\deg Q > \deg f = \deg f'\). We therefore obtain that
	\[ \left\{ \b + h\g_\nu \right\} \text{ is cofinal in } v(\th-I).   \]  
	From (\ref{eqn v(theta-I) = 0^-}) and (\ref{eqn w(f-theta) > beta + h gamma_nu}), we conclude that 
	\[  w(f(X)-\th) \ge 0. \]
	Now \(\th\) being an Artin-Schreier generator yields that the Krasner constant \(\o_I(\th) = 0\). If \( w(f(X) - \th) > 0 \), then \cite[Lemma 2.21]{Kuhlmann2004BadPlaces} implies that
	\[ \th\in I(f(X))^h \subseteq I(X)^h.  \]
	On the other hand, \(I\) is relatively algebraically closed in \(K(X)^h\) by definition. Since \(K(X)^h = I(X)^h\) and \(\th\notin I\), the above containment relation is not possible. Thus
	\[  w(f(X) - \th) = 0. \]
	Since \( (I(\th,X)|I,w) \) is immediate, we can take some \(c\in K\) such that \( (f(X)-\th)w = cv \). Consequently, \( w(f(X)-\th-c) > w(f(X)-\th) = 0 \). Observe that \(\th+c\) is also an Artin-Schreier generator of \(I(\th)|I\). The preceding arguments then again lead us to a similar contradiction. Therefore, it follows that the pseudo-convergent sequence \(\C\) must be of transcendental type.
\end{proof}

In the presence of pseudo-convergent sequences of transcendental type, we can employ Kuhlmann's theory of normal forms. This leads us to the next observation:

\begin{Proposition}\label{Prop henselian rationality over IC}
	Assume that \(K = K^r\). Take some separating transcendental element \(X \in F\setminus K\) and set \(I := IC(K(X)|K,w)\). Then,
	\[ (F^h|I,w) \text{ is henselian rational}.  \]
\end{Proposition}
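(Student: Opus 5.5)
We work over \(I\) throughout and mirror Kuhlmann's tame-case argument, using Proposition \ref{Prop pcs tr type over IC} to secure the transcendental-type hypothesis his normal forms require. First, \(I\) is perfect, of rank one, \(I = I^r\) (since \(I^r = I\cdot K^r\)), so \(vI\) is divisible and \(Iv\) is algebraically closed. Since \(K(X)^h = I(X)^h\) and \(F^h\) is the henselization of a finite extension of \(K(X)\), we get that \(F^h|I(X)^h\) is a finite separable immediate extension; the field \(I\) lies inside \(F^h\). By \cite[Lemma 5.1]{Kuhlmann2019EliminationII}-type arguments (divisible value group, algebraically closed residue field), after possibly passing to the normal hull we would like to write \(F^h|I(X)^h\) as a tower of Artin-Schreier extensions
\[ I(X)^h \subset I(X)^h(\th_1)\subset\dotsb\subset I(X)^h(\th_1,\dotsc,\th_n)=F^h. \]
The cleanest route is to argue directly that \(F^h|I(X)^h\) is already such a tower: its Galois closure has Galois group a \(p\)-group, because the absolute ramification field of \(I(X)^h\) contains all tame parts and the extension is immediate; then a \(p\)-group admits a composition series with quotients of order \(p\), and we reduce the non-Galois case by the standard trick of descending from the Galois closure, as Kuhlmann does.

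We then induct on \(n\). For the step, the rank-one hypothesis gives density of \(I[X]\) in \(I(X)^h\), so by Krasner-type continuity (Lemma 2.21 of \cite{Kuhlmann2004BadPlaces}) we may assume \(\th_1^p-\th_1=f(X)\in I[X]\). By Proposition \ref{Prop pcs tr type over IC}, the pseudo-convergent sequence \(\C\subset I\) attached to \((I(X)|I,w)\) is of transcendental type, so Kuhlmann's normal form \cite[Lemma 4.2]{Kuhlmann2019EliminationII} applies: it produces \(F\equiv f \bmod \P(I[X])\) with \(h_I(X:F)=1\), using the divisibility of \(vI\) and the algebraic closedness of \(Iv\) exactly as in his tame case. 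Then Kuhlmann-Vlahu's degree theorem (or Corollary \ref{Coro h(f)=1}, after passing to \(I^h\), which is harmless since \(I\subseteq K(X)^h\) means \(I\) is already henselian) gives \(I(X)^h=I(F(X))^h\). Taking \(\a_1\) with \(\a_1^p-\a_1=F\), we obtain \(I(X)^h(\th_1)=I(\a_1)^h\) as in the introduction. Now \(\a_1\) is transcendental over \(I\), and \(I\) remains relatively algebraically closed in \(I(\a_1)^h\subseteq F^h\) (it is the full implicit constant field up to a finite extension; here one must check that the relative algebraic closure does not grow, or else replace \(I\) by \(IC(K(\a_1)|K,w)\), which is again of the form required by Proposition \ref{Prop pcs tr type over IC}). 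So \(\a_1\) plays the role of \(X\) with a tower of length \(n-1\), and induction concludes.

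The main obstacle is the bookkeeping in the induction: after replacing \(X\) by \(\a_1\), the implicit constant field \(IC(K(\a_1)|K,w)\) may strictly contain \(I\). I would handle this by noting that each such field is contained in \(L=IC(F|K,w)\), which is finite over \(I\). Then I would either run the induction over the enlarged field and pull back using Lemma \ref{Lemma henselian ratioanlity pull down finite extns} (which applies verbatim over \(I\), since \(I\) is perfect of rank one), or first replace \(I\) by \(L\) from the start. The other delicate point is the reduction of \(F^h|I(X)^h\) to an honest Artin-Schreier tower without lifting to larger fields. I would try first to show that its Galois closure stays inside an immediate extension, via the \(I=I^r\) hypothesis, and descend using Lemma \ref{Lemma henselian ratioanlity pull down finite extns} again if a finite lift of constants is needed.
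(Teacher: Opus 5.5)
Your proposal is correct and is essentially the paper's proof. The steps match: the Artin--Schreier tower over \(I(X)^h=K(X)^h\) (no lifting is needed, since this henselian field has divisible value group and algebraically closed residue field, so its absolute Galois group is pro-\(p\)); reduction to \(f\in I[X]\); transcendental type via Proposition \ref{Prop pcs tr type over IC}, so that Kuhlmann's normal forms give \(I(X)^h(\th_1)=I(\a_1)^h\); iteration; and descent by Lemma \ref{Lemma henselian ratioanlity pull down finite extns}. The only difference is bookkeeping: the paper tracks \(I_{j+1}=IC(I_j(\th_{j+1})|I_j,w)\) with \([I_{j+1}:I_j]\le p\), whereas your option of working over \(L=\overline{K}\cap F^h\) from the start keeps the constant field fixed (\(L\) is finite over \(I\) by linear disjointness, since \(I\) is relatively algebraically closed in \(K(X)^h\), and you should state this); drop the alternative \(IC(K(\a_1)|K,w)\), since nothing forces \(I\subseteq K(\a_1)^h\), and the field to track is \(\overline{K}\cap I(\a_1)^h\).
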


\begin{proof}
	Observe that \( wK(X) \) is divisible and \(K(X)w\) is algebraically closed. Employing \cite[Lemma 5.1]{Kuhlmann2019EliminationII}, the extension \( F^h|K(X)^h \) can be decomposed as a tower of Artin-Schreier extensions: 
	\[ K\subset K(X)^h \overset{\text{A-S}}{\subset} K(X)^h(\th_1) \overset{\text{A-S}}{\subset} \cdots \overset{\text{A-S}}{\subset} K(X)^h(\th_1, \dotsc , \th_n) = F^h,  \]
	where \(\th_i\) is an Artin-Schreier generator over \( K(X)^h (\th_1, \dotsc, \th_{i-1}) \). The inclusions \( K\subseteq I \subset K(X)^h \) yield that
	\[ K(X)^h = I(X)^h.  \]
	Thus the above tower can be rewritten as
	\[ K\subseteq I \subset I(X)^h \overset{\text{A-S}}{\subset} I(X)^h(\th_1) \overset{\text{A-S}}{\subset} \cdots \overset{\text{A-S}}{\subset} I(X)^h(\th_1, \dotsc , \th_n) = F^h.  \]
	Since \((I,v)\) is a rank one valued field, we can assume that \( f(X):= \th_1^p - \th_1 \in I[X] \) by \cite[Lemma 4.1]{Kuhlmann2019EliminationII}. From Proposition \ref{Prop pcs tr type over IC}, we obtain that any pseudo-convergent sequence corresponding to the extension \( (I(X)|I,w) \) is of transcendental type. Consequently, in view of Lemmas 4.2 and 4.7 of \cite{Kuhlmann2019EliminationII}, we can assume that \( I(X)^h = I(f(X))^h \). It follows that
	\[ I(X)^h(\th_1) = I(f(X))^h(\th_1) = I(\th_1)^h.  \]
	Therefore, we have the modified tower
	\[ K\subseteq I \subset I(\th_1)^h \overset{\text{A-S}}{\subset} I(\th_1)^h (\th_2) \overset{\text{A-S}}{\subset} \cdots \overset{\text{A-S}}{\subset} I(\th_1)^h(\th_2, \dotsc , \th_n) = F^h.   \]   
	Repeated applications of the preceding arguments yield the tower
	\[ K\subseteq I \subseteq I_1 \subseteq \cdots \subseteq I_{n-1} \subset I_{n-1}(\th_n)^h = F^h, \  \]
	where 
	\[ I_{j+1} := IC (I_j(\th_{j+1}) | I_j, w).  \]
	Thus, 
	\[ (F^h | I_{n-1},w) \text{ is henselian rational}.  \]
	Now, \(I\) is relatively algebraically closed in \(K(X)^h\) by definition. Hence the algebraic extension \(I_1|I\) is linearly disjoint to \(K(X)^h|I\). Moreover, \(I_1\subset I(\th_1)^h = K(X)^h(\th_1)\). Thus the compositum \( I_1 \cdot K(X)^h \subseteq K(X)^h(\th_1) \). It follows that 
	\[ [I_1:I] = [I_1\cdot K(X)^h : K(X)^h] \le [K(X)^h(\th_1) : K(X)^h] = p.  \]
	Repeated iterations of this argument yield that \( [I_{n-1}:I] < \infty \). Since \( (F^h|I_{n-1},w) \) is henselian rational, the assertion now follows from Lemma \ref{Lemma henselian ratioanlity pull down finite extns}.
\end{proof}

Therefore, henselian rationality is obtained if we can exhibit the existence of some separating transcendental element \(X\in F\setminus K\) such that \(IC(K(X)|K,w)\) is a finite extension of \(K\). We observe, however, that the finiteness of the implicit constant field is a property of the extension \((F|K,w)\) itself, and is thus independent of the choice of \(X\).

\begin{Proposition}
	Let notations and assumptions be as in Proposition \ref{Prop henselian rationality over IC}. Set \( \widetilde{I} := IC(F|K,w) \). Then
	\[ \widetilde{I}|I \text{ is a finite extension}. \]
\end{Proposition}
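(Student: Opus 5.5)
The same linear-disjointness argument as at the end of the proof of Proposition \ref{Prop henselian rationality over IC} should work, now applied to all of \(F^h|K(X)^h\) at once. First, \(I \subseteq \widetilde{I}\): \(I\) is algebraic over \(K\) and lies in \(K(X)^h \subseteq F^h\), since \(K(X) \subseteq F\) and the henselizations are taken with respect to the fixed extension of \(w\). Next, \(\widetilde{I}|I\) is algebraic, and \(I\) is relatively algebraically closed in \(K(X)^h\) by definition. Since \(K\) is perfect, \(I\) is perfect, so \(\widetilde{I}|I\) is separable. Taking the relative algebraic closure of \(I\) in \(K(X)^h\) to be \(I\) itself, every finite subextension \(I \subseteq M \subseteq \widetilde{I}\) is linearly disjoint from \(K(X)^h\) over \(I\). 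I would justify this via the standard fact that a separable algebraic extension of \(I\) is linearly disjoint from a regular extension of \(I\). The extension \(K(X)^h|I\) is regular: \(I\) is relatively algebraically closed in it and \(I\) is perfect, so the extension is separable as well.

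Hence for every finite subextension \(M\) of \(\widetilde{I}|I\) we get
\[ [M:I] = [M\cdot K(X)^h : K(X)^h] \le [F^h : K(X)^h], \]
because \(M\cdot K(X)^h \subseteq F^h\). The right-hand side is finite: \(X\) is a separating transcendental element and \(F|K\) has transcendence degree one, so \([F:K(X)]<\infty\). Moreover \(F^h = F\cdot K(X)^h\), since the henselization of a finite extension is the compositum with the henselization of the base. This gives \([F^h:K(X)^h] \le [F:K(X)]\). As every finite subextension of the separable algebraic extension \(\widetilde{I}|I\) has degree bounded by the fixed integer \([F:K(X)]\), the primitive element theorem implies \([\widetilde{I}:I] \le [F:K(X)] < \infty\). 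Concretely, choose \(M = I(\alpha)\) of maximal degree; then every \(\beta \in \widetilde{I}\) satisfies \(I(\alpha,\beta) = I(\alpha)\).

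The main obstacle is justifying linear disjointness cleanly. Alternatively, one can argue directly for a simple extension \(M = I(\alpha)\): the minimal polynomial of \(\alpha\) over \(I\) stays irreducible over \(K(X)^h\). Otherwise its monic factors over \(K(X)^h\) would have coefficients that are algebraic over \(I\) (they are symmetric functions of conjugates of \(\alpha\)) and lie in \(K(X)^h\), hence lie in \(I\). That would contradict irreducibility over \(I\). This direct route avoids citing regularity and is the one I would write out.
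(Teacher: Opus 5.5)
Your proof is correct and follows the paper's argument. Since \(I\) is relatively algebraically closed in \(K(X)^h\), the extension \(\widetilde{I}|I\) is linearly disjoint from \(K(X)^h|I\), which gives \([\widetilde{I}:I] = [\widetilde{I}\cdot K(X)^h : K(X)^h] \le [F^h:K(X)^h] < \infty\). You also fill in details the paper leaves implicit: perfectness of \(I\), which gives separability and so permits your primitive-element reduction to finite subextensions, and the identity \(F^h = F\cdot K(X)^h\), which bounds \([F^h:K(X)^h]\) by \([F:K(X)]\).
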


\begin{proof}
	Since \(I\) is relatively algebraically closed in \(K(X)^h\), the algebraic extension \(\widetilde{I}|I\) is linearly disjoint to \(K(X)^h|I\). It follows that
	\[ [\widetilde{I}:I]  = [\widetilde{I}\cdot K(X)^h : K(X)^h] \le [F^h : K(X)^h] < \infty. \]
\end{proof}


\subsection{Proof of Theorem \ref{Thm henselian rationality over IC}}

\begin{proof}
	Take a separating transcendental element \(X\in F\setminus K\) and set \( I:= IC(K(X)|K,w) \). Then \( (F^h|I,w) \) is henselian rational by Proposition \ref{Prop henselian rationality over IC}. Since \( I\subseteq IC(F|K,w) = L \), it follows that \( (F^h|L,w) \) is henselian rational. The final assertion is now a direct consequence of Lemma \ref{Lemma henselian ratioanlity pull down finite extns}. 
\end{proof}


\subsection{Type II extensions and finiteness of the implicit constant field}

We now provide a class of immediate function field extensions \((F|K,w)\) of rank one valued fields where \(IC(F|K,w)\) is a finite extension of \(K\), and hence \((F|K,w)\) is henselian rational. Take some transcendental element \(X\in F\setminus K\). Fix a completion \((\widehat{K},\widehat{v})\) of \((K,v)\). It has been observed in \cite{Dutta2023MathNach} that there exists a common extension of \(w\) and \(\widehat{v}\) to \(\overline{\widehat{K}}(X)\), which we will denote by \(\widehat{w}\). The extension \( \left( \widehat{K}(X)|\widehat{K}, \widehat{w} \right) \) is said to be \textit{induced} by \((K(X)|K,w)\). Depending on the behaviour of the induced extensions, the extensions \( (K(X)|K,w) \) are classified in two groups: Type I and Type II. It has been observed in \cite{Dutta2023MathNach} that \( (K(X)|K,w) \) is of Type II if it satisfies the following equivalent criteria: 
\sn (C1) the extension \( \left( \widehat{K}(X)|\widehat{K}, \widehat{w} \right) \) is not immediate, 
\nn (C2) the extension \( \left( \widehat{K}(X)|\widehat{K}, \widehat{w} \right) \) is valuation transcendental,
\nn (C3) there is a Cauchy sequence corresponding to the extension \( \left( \overline{K}(X) | \overline{K}, w \right) \) (thus the example constructed in Section \ref{Sect example} is of Type II).\\

\begin{Proposition}\label{Prop Type II}
	Let notations and assumptions be as in Proposition \ref{Prop henselian rationality over IC}. Further, assume that the extension \( (K(X)|K,w) \) is of Type II. Then,
	\[ (F|K,w) \text{ is henselian rational}. \]
\end{Proposition}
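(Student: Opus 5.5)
By Proposition \ref{Prop henselian rationality over IC}, \((F^h|I,w)\) is henselian rational with \(I = IC(K(X)|K,w)\). By Lemma \ref{Lemma henselian ratioanlity pull down finite extns}, it therefore suffices to show that \([I:K]<\infty\). Since \(I\subseteq L = IC(F|K,w)\) and \(L|I\) is finite, this is the same as finiteness of \(L|K\). So the whole proof reduces to one claim: if \((K(X)|K,w)\) is of Type II, then the relative algebraic closure of \(K\) in \(K(X)^h\) is a finite extension of \(K\).

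To prove the claim I would use criterion (C2). The induced extension \((\widehat{K}(X)|\widehat{K},\widehat{w})\) is valuation transcendental. By (C3) it is determined by a pair of definition \((b,\delta)\) with \(b\in\overline{\widehat{K}}\), where \(b\) is the limit in the completion of a Cauchy sequence in \(\overline K\) approximating \(X\). Take \((b,\delta)\) minimal over \(\widehat{K}\) and set \(n:=[\widehat{K}(b):\widehat{K}]\). Because \(K\) has rank one, \(K^h\subseteq \widehat{K}\) up to the choice of extension, and \(K(X)^h\) embeds in \(\widehat{K}(X)^h\) compatibly with \(\widehat w\). Hence \(I\subseteq \overline{K}\cap \widehat{K}(X)^h\), and this field is contained in \(\overline{\widehat{K}}\cap \widehat{K}(X)^h = IC(\widehat{K}(X)|\widehat{K},\widehat{w})\).

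Next I would bound the implicit constant field of a valuation transcendental extension. For \(\widehat w = v_{b,\delta}\), the standard result (Kuhlmann's work on implicit constant fields, or the Alexandru–Popescu–Zaharescu description via minimal pairs) says \(IC(\widehat{K}(X)|\widehat{K},\widehat w)\) is contained in a henselization of \(\widehat{K}(b)\). In particular it is a finite extension of \(\widehat{K}\), of degree at most \(n\). The cleanest route: an element algebraic over \(\widehat K\) and lying in \(\widehat K(X)^h\) lies in \(\widehat K(b)^h(X)^h\). Over \(\widehat K(b)^h\) the valuation is \(v_{b,\delta}\), which is residually or value transcendental with center in the base. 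The relative algebraic closure of a henselian field in the henselization of such a monomially valued rational function field is the field itself, since the value group and residue field extensions only adjoin a transcendental element.

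It remains to descend finiteness from \(\widehat K\) to \(K\), and I expect this to be the main obstacle. I would argue that \(I|K\) is separable, since \(K\) is perfect, and that the map \(I\to I\cdot\widehat{K}\) preserves degrees. This holds because \(K\) is henselian-dense: for rank one, \(K^h\) is dense in \(\widehat K\), and algebraic extensions of \(K^h\) stay linearly disjoint from \(\widehat K\) over \(K^h\), since \(K^h\) is separably algebraically closed in \(\widehat K\). Here I must check that \(K^h\subseteq I\), which holds because \(K(X)^h\supseteq K^h\). Then \([I:K^h] = [I\widehat K:\widehat K]\le n\). Finally \([K^h:K]\) need not be finite in general, but Lemma \ref{Lemma henselian ratioanlity pull down finite extns} only uses Krasner's Lemma. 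So I would rerun that lemma over \(K^h\) in place of \(K\), which works because \(K(Y)^h = K^h(Y)^h\) and so replacing \(K\) by \(K^h\) costs nothing. That yields \(F^h = K(Y)^h\) and completes the proof.
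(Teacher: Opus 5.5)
Your argument is correct and is essentially the paper's. You reduce via Lemma~\ref{Lemma henselian ratioanlity pull down finite extns} to showing \([I:K]<\infty\), use \(I\cdot\widehat{K}\subseteq \widehat{I}:=IC(\widehat{K}(X)|\widehat{K},\widehat{w})\) together with linear disjointness from \(\widehat{K}\), and bound \(\widehat{I}\) inside a finite extension \(\widehat{K}(b)\) using the valuation-transcendence of the induced extension. The paper simply cites Dutta's theorem on implicit constant fields for that last step. Relying on the cited result there is the right move, because your value-group and residue-field heuristic alone would not rule out immediate algebraic extensions.

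The one real difference is your detour through \(K^h\), which is harmless but unnecessary. Every \(\sigma\in G^r\) preserves \(v\), so \(K=K^r\) already forces \(K\) to be henselian, and therefore (being perfect of rank one) relatively algebraically closed in \(\widehat{K}\).
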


\begin{proof}
	 Fix a completion \((\widehat{K},\widehat{v})\) of \((K,v)\) and consider the induced extension \( \left( \widehat{K}(X) | \widehat{K}, \widehat{w} \right) \). Set \( \widehat{I} := IC (\widehat{K}(X)|\widehat{K},\widehat{w}) \). Since \( (K,v) \) is a perfect valued field of rank one, it is relatively algebraically closed in \(\widehat{K}\). Thus \(I|K\) and \(\widehat{K}|K\) are linearly disjoint. Moreover, it has been observed in \cite[Equation 9.1]{Dutta2023MathNach} that 
	\[  I \cdot \widehat{K} \subseteq \widehat{I} .  \]
	Consequently, 
	\[ [I:K] = [I \cdot \widehat{K} : \widehat{K}] \le [\widehat{I} : \widehat{K}].  \]
	Since \( (K(X)|K,w) \) is of Type II, using the characterization (C2) above and employing \cite[Theorem 1.1]{Dutta2021}, there exists an element \(a\) algebraic over \(\widehat{K}\) such that \( \widehat{I} \subseteq \widehat{K}(a) \). As a consequence, 
	\[ [I:K] \le [\widehat{K}(a) : \widehat{K}] < \infty.  \]
	The assertion now follows from Proposition \ref{Prop henselian rationality over IC} and Lemma \ref{Lemma henselian ratioanlity pull down finite extns}.
\end{proof}


\section{Concluding remarks and open problems}\label{Section open problems} A consequence of Theorem \ref{Thm henselian rationality over IC} and Proposition \ref{Prop Type II} is the following result:

\begin{Corollary}
	Let \((K,v)\) be a perfect valued field of positive characteristic satisfying \(K=K^r\), and let \((F|K,w)\) be an immediate extension of valued fields of transcendence degree one. If there exists a separating transcendental element \(X\in F\setminus K\) of Type II, then \((F|K,w)\) is henselian rational. 
\end{Corollary}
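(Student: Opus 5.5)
The plan is to obtain the corollary by checking the hypotheses of Proposition \ref{Prop Type II} and then applying it; no new argument should be needed. I read the statement under the standing assumptions of Section \ref{Section henselian rationality over IC}, in particular that \((K,v)\) has rank one. This matters because Lemma \ref{Lemma henselian ratioanlity pull down finite extns}, Proposition \ref{Prop pcs tr type over IC} and the density input \cite[Lemma 4.1]{Kuhlmann2019EliminationII} all use rank one. The corollary does not restate this hypothesis, so I would make it explicit.

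First, fix a separating transcendental element \(X\in F\setminus K\) for which \((K(X)|K,w)\) is of Type II, and set \(I:=IC(K(X)|K,w)\). The hypotheses of Proposition \ref{Prop henselian rationality over IC} hold: \(K\) is perfect of rank one, \(K=K^r\), and \(X\) is separating. That proposition then gives that \((F^h|I,w)\) is henselian rational. Since \(I\subseteq L:=IC(F|K,w)\), Theorem \ref{Thm henselian rationality over IC} reduces the claim to showing \([I:K]<\infty\). Indeed, \([L:I]<\infty\) by the proposition following Proposition \ref{Prop henselian rationality over IC}, so \([I:K]<\infty\) gives \([L:K]<\infty\).

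Second, finiteness of \(I|K\) is exactly the content of the proof of Proposition \ref{Prop Type II}. A perfect rank-one field is relatively algebraically closed in its completion, so \([I:K]=[I\widehat K:\widehat K]\le[\widehat I:\widehat K]\). By (C2) and \cite[Theorem 1.1]{Dutta2021}, \(\widehat I\) lies in a finite extension \(\widehat K(a)\). Lemma \ref{Lemma henselian ratioanlity pull down finite extns} then pulls henselian rationality down from \(I\) to \(K\). Citing Proposition \ref{Prop Type II} directly therefore finishes the proof.

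The main obstacle arises only if the corollary is meant for arbitrary rank. In that case one would first need the analogue of Step \ref{Step A}, reducing to rank one via ramification theory. I expect this to be the hard part, since Kuhlmann's reduction is written for tame fields. I would keep the corollary within the rank-one standing hypothesis.
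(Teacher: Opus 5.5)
Your proposal is correct and matches the paper, which obtains the corollary directly from Proposition \ref{Prop Type II}. That proposition combines Proposition \ref{Prop henselian rationality over IC}, the finiteness of \(I|K\) via the completion and \cite[Theorem 1.1]{Dutta2021}, and Lemma \ref{Lemma henselian ratioanlity pull down finite extns}; you are also right to make explicit the rank-one hypothesis, which the corollary omits but which is a standing assumption of Section \ref{Section henselian rationality over IC} and is used by every ingredient.
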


Since every such extension is either of Type I or Type II, the preceding corollary leaves precisely the following case open: 
\begin{Problem}\label{Problem}
	Let \((K,v)\) be a perfect valued field of positive characteristic satisfying \(K=K^r\), and let \((F|K,w)\) be an immediate extension of valued fields of transcendence degree one. Set
	\[ L= IC(F|K,w). \]
	Assume that \( (K(X)|K,w) \) is of Type I for every separating transcendental element \(X\in F\setminus K\). Is \( (F|K,w) \) henselian rational? Equivalently, given that
	\[ F^h = L(Y)^h \]
	for some element \(Y\), does there necessarily exist \(Z\in F^h\), transcendental over \(K\), such that
	\[ F^h = K(Z)^h? \]
\end{Problem}

An affirmative answer to Problem \ref{Problem} would settle the henselian rationality problem over rank one perfect fields coinciding with their absolute ramification fields. This extends the algebraically-closed base field setup of Step \ref{Step B} in Kuhlmann's program. We mention here that this is a strict generalization, since there exist non-algebraically closed henselian valued perfect fields \( (K,v) \) of rank one satisfying \( K = K^r \). An example is furnished underneath.

\begin{Example}\label{Eg K not alg closed}
	Consider \(\FF_p(t)\) equipped with the \(t\)-adic valuation \(v\). Fix an extension of \( v\) to \(\overline{\FF_p(t)}\), denoted again by \(v\). Let \( (k,v) \) denote the henselization of \( (\FF_p(t), v) \).  Set
	\[ k_1:= k^{1/p^\infty} \text{ and } K:= k_1^r,  \]
	where \(k^{1/p^\infty}\) denotes the perfect closure of \(k\). We will show that \(K\) is not algebraically closed by exhibiting a non-trivial Artin-Schreier extension \(K(a)|K\). Take \( a\in\overline{k} \) such that
	\[ a^p - a = 1/t.  \]
	Since \( va = -1/p \notin \ZZ = vk \), we obtain that \( k(a)|k \) is a non-trivial Artin-Schreier extension. In particular, it is separable. Thus it is linearly disjoint to \(k_1\), the perfect closure of \(k\). Consequently,
	\begin{equation*}
		[k_1(a):k_1] = p.
	\end{equation*}
	Moreover, the Fundamental Inequality yields
	\[  vk(a) = \dfrac{1}{p}\ZZ \text{ and } k(a)v = \FF_p.  \]
	Since \(k_1 = k^{1/p^\infty}\), the field \(k_1(a)\) is precisely the perfect closure of \(k(a)\). Thus \( vk_1(a) \) is the \(p\)-divisible closure of \(vk(a)\) and \(k_1(a)v\) is the perfect closure of \(k(a)v\). It follows that
	\[ vk_1(a) = \dfrac{1}{p^\infty}\ZZ = vk_1 \text{ and } k_1(a)v = \FF_p = k_1v.  \]
	Thus \( (k_1(a)|k_1,v) \) is an immediate extension. Since \(K = k_1^r\), the extension \( (K|k_1,v) \) is \textit{defectless}, and hence we obtain from \cite[Lemma 2.5]{Kuhlmann2010ArtinSchreier} that
	\[ [K(a):K] = p.  \]
	Hence \( K(a)|K \) is a non-trivial Artin-Schreier extension, and in particular, \(K\) is not algebraically closed. 
\end{Example}

The preceding discussion shows that a successful resolution of the problem of henselian rationality boils down to two remaining descent problems. The first is descent from \(IC(F|K,w)\) to \(K\) in the Type I case under the assumption \(K= K^r\). The second is descent from \(K^r\) to an arbitrary henselian perfect base field \(K\), an analogue of Step \ref{Step C} in Kuhlmann's program. 


\section*{Declaration of generative AI and AI-assisted technologies in the manuscript preparation process} During the preparation of this manuscript, the authors used OpenAI ChatGPT as an auxiliary tool for mathematical discussion, as well as for improving the organization, clarity and language of the manuscript. After using this service, the authors reviewed and edited the content as needed, and take full responsibility for the content of the published article.


\bibliographystyle{alpha}
\bibliography{references}

\end{document}